\documentclass[a4paper,11pt,reqno]{article}
\usepackage{latexsym,amsmath,amssymb,amsfonts,amsthm}
\usepackage{mathrsfs}
\usepackage{a4wide}
\usepackage[usenames, dvipsnames]{xcolor}
\usepackage{changes}
\usepackage[colorlinks=true, linkcolor=blue, citecolor=ForestGreen]{hyperref}


\newcounter{margcount}
\setcounter{margcount}{0}


\newcommand{\xupref}[2]{\hspace{-0.3ex}\stackrel{\eqref{#1}}{#2}} 



\begingroup
\newtheorem{theorem}{Theorem}[section]
\newtheorem{lemma}[theorem]{Lemma}
\newtheorem{proposition}[theorem]{Proposition}

\endgroup
\begingroup
\newtheorem{definition}[theorem]{Definition}
\endgroup
\newtheorem{remark}[theorem]{Remark}

\setcounter{tocdepth}{2}
\numberwithin{equation}{section}

\newcommand{\e}{\varepsilon}
\newcommand{\vphi}{\varphi}
\newcommand{\N}{\mathbb N}
\newcommand{\Z}{\mathbb Z}

\newcommand{\R}{\mathbb R}

\newcommand{\de}{\,\mathrm{d}}

\newcommand{\supp}{{\rm supp\,}}

\DeclareMathOperator*{\loc}{loc}

\newcommand{\measf}{\mathcal{M}_+((0,\infty))}
\newcommand{\measftheta}{\mathcal{M}_+((0,\infty);1+\xi^\theta)}
\newcommand{\measfbeta}{\mathcal{M}_+((0,\infty);1+\xi^{\beta+1})}
\newcommand{\measg}{\mathcal{M}_+(\R)}
\newcommand{\measgbeta}{\mathcal{M}_+(\R; 1 + 2^{(\beta+1) x})}
\newcommand{\cc}{C_{\mathrm c}}
\newcommand{\coag}{\mathscr{C}}
\newcommand{\frag}{\mathscr{F}}


\title{Solutions with peaks for a coagulation-fragmentation equation. Part I: stability of the tails}
\author{Marco Bonacini\thanks{\emailmarco} \and Barbara Niethammer\thanks{\emailbarbara} \and Juan J. L. Vel\'{a}zquez\thanks{\emailjuan}}
\date{${}^*$\addmarco \\ ${}^\dag$$^\ddag$\uniadd \\[3ex]\today}

\newcommand{\email}[1]{E-mail: \tt #1}
\newcommand{\emailmarco}{\email{marco.bonacini@unitn.it}}
\newcommand{\emailbarbara}{\email{niethammer@iam.uni-bonn.de}}
\newcommand{\emailjuan}{\email{velazquez@iam.uni-bonn.de}}
\newcommand{\addmarco}{\emph{\small Department of Mathematics, University of Trento\\ Via Sommarive 14, 38123 Povo (TN), Italy}}
\newcommand{\uniadd}{\emph{\small University of Bonn, Institute for Applied Mathematics\\ Endenicher Allee 60, 53115 Bonn, Germany}}

\begin{document}

\maketitle

\begin{abstract}
The aim of this two-part paper is to investigate the stability properties of a special class of solutions to a coagulation-fragmentation equation. We assume that the
coagulation kernel is close to the diagonal kernel, and that the fragmentation kernel is diagonal. We construct a two-parameter family of stationary solutions concentrated in Dirac masses. We carefully study the asymptotic decay of the tails of these solutions, showing that this behaviour is stable. In a companion paper we prove that for initial data which are sufficiently concentrated, 
the corresponding solutions approach one of these stationary solutions for large times.
\end{abstract}



\section{Introduction} \label{sect:intro}

The aim of this two-part paper is to investigate the stability properties of a special class of solutions to a coagulation-fragmentation model.
We consider the evolution equation
\begin{equation} \label{eq:coagfrag}
\begin{split}
\partial_t f(\xi,t) = \coag[f](\xi,t) + \frag[f](\xi,t)\,,
\end{split}
\end{equation}
where the coagulation operator and the fragmentation operator are respectively defined as
\begin{equation} \label{coag}
\coag[f](\xi,t) := \frac12\int_0^{\xi} K(\xi-\eta,\eta)f(\xi-\eta,t)f(\eta,t)\de \eta - \int_0^\infty K(\xi,\eta)f(\xi,t)f(\eta,t)\de \eta\,,
\end{equation}
\begin{equation} \label{frag}
\frag[f](\xi,t) := \int_0^\infty \Gamma(\xi+\eta,\eta)f(\xi+\eta,t)\de\eta - \frac12\int_0^\xi \Gamma(\xi,\eta)f(\xi,t)\de\eta\,.
\end{equation}
The mean-field equation \eqref{eq:coagfrag} describes the time evolution of a distribution $f(\xi,t)$ of particles of mass $\xi\geq0$ at time $t\geq0$. The reaction kernels $K$ and $\Gamma$ model the rate at which particles coalesce or fragment. The meaning of the different terms in the equation is the following: the first term in \eqref{coag} and the first term in \eqref{frag} account for the formation of particles of size $\xi$, by coalescence of particles of smaller sizes $\eta$ and $\xi-\eta$, or by fragmentation of particles of larger size $\xi+\eta$; the second term in \eqref{coag} and the second term in \eqref{frag} describe the depletion of particles of size $\xi$ by the reverse processes.
We refer the reader to the recent books \cite{BLL19a,BLL19b} and 
the references therein for an account of the general properties of equations in the form \eqref{eq:coagfrag} and their applications.

In the following we will assume that the coagulation kernel $K(\xi,\eta)$ is compactly supported around the diagonal $\{\xi=\eta\}$, while the fragmentation kernel 
$\Gamma(\xi,\eta)$ is purely diagonal (that is, a particle can only split in a pair of particles of the same size). The kernels will also satisfy suitable growth 
conditions at the origin and at infinity, see Section~\ref{sect:weaksol} for the precise assumptions. It turns out that for such kernels the equation \eqref{eq:coagfrag} has a
two-parameter family of stationary solutions with peaks concentrated in Dirac masses: more precisely, given any value of the total mass $M>0$ and a 
shifting parameter $\rho\in[0,1)$, there exists a (measure) solution to \eqref{eq:coagfrag} in the form
\begin{equation} \label{intro1}
f_p(\xi;M,\rho) = \sum_{n=-\infty}^\infty f_n(M,\rho)\delta(\xi-2^{n+\rho})
\end{equation}
with total mass $\int_0^\infty\xi f_p(\xi;M,\rho)\de\xi=\sum_{n=-\infty}^\infty 2^{n+\rho} f_n(M,\rho) = M$ (see Section~\ref{sect:stationary} for details). Thus,
there is in particular also nonuniqueness of steady states since there exists a one-parameter family of stationary solutions for any given mass.
It can be seen that the diagonal kernel is the only fragmentation kernel for which the system has stationary solutions with this type of behaviour, except for additions
to $\Gamma$ of measures supported in the complement of the lines $\{\xi+\eta=2^{n+\rho}\}$.
The existence of the equilibria \eqref{intro1} imply that the coagulation and fragmentation coefficients satisfy the so-called \emph{detailed balance condition}, 
that is the identity
\begin{equation*}
K(\xi,\eta)\mu(\xi)\mu(\eta) = \Gamma(\xi+\eta,\eta)\mu(\xi+\eta)
\end{equation*}
holds for the family of measures $\mu=f_p$. Notice, however, that this condition is satisfied in a slightly atypical sense, as the stationary states $f_p$ 
are mutually singular for different values of $\rho$. Another consequence of this nonuniqueness of stationary solutions and their
singular structure is that we have a corresponding family of entropies, but they are finite
only for measures that have the same support as the respetive stationary solution. Thus, we cannot  use an
$H$-Theorem to show for general data convergence to a steady state as has been done in \cite{LM03} (for the continuous case) and \cite{Can07} (in the discrete case).

It is the purpose of this paper to start a rigorous analysis of the properties of the stationary solutions \eqref{intro1}. Here we are mainly concerned with the stability of the \emph{tails}: the construction of the solution $f_p$ shows that the number of particles $f_n$ in the peak located at the point $\xi=2^{n+\rho}$ decays exponentially as $\xi\to\infty$, apart from a lower order algebraic correction; more precisely, there exists a constant $A_M>0$, uniquely determined by the total mass $M$ of the solution, such that
\begin{equation} \label{intro2}
f_n(M,\rho) \sim c\,2^{an}e^{-A_M2^n} \qquad\text{as } n\to\infty,
\end{equation}
where $c>0$ and $a>0$ are constants depending only on the properties of the kernels and on the shifting parameter $\rho$.

The main contribution of this paper is to show that this tail behaviour is stable, in the following sense. Consider an initial datum which is supported in the discrete set $\{2^{n+\rho}\}_{n\in\Z}$:
\begin{equation*}
h^0(\xi) = \sum_{n=-\infty}^\infty h_n^0\delta(\xi-2^{n+\rho}), \qquad\text{with } M=\int_0^\infty\xi h^0(\xi)\de\xi\,;
\end{equation*}
assume further that $h^0$ is a small perturbation of a stationary state (not necessarily the one with the same mass): $h_n^0 = (1+\e_n^0)f_n(M^0,\rho)$, for some $M^0$ sufficiently close to $M$ and coefficients $\e_n^0$ small enough. In our main result (Theorem~\ref{thm:dirac}) we show that this behaviour is persistent: we construct a solution $h(\xi,t)$ to \eqref{eq:coagfrag} starting from $h^0$ which remains concentrated in the points $\{2^{n+\rho}\}_{n\in\Z}$ and can be written in the form
\begin{equation*}
h(\xi,t) = \sum_{n=-\infty}^\infty (1+\e_n(t))f_n(M(t),\rho)\delta(\xi-2^{n+\rho})
\end{equation*}
for suitable coefficients $\e_n(t)$, exponentially decaying to zero as $t\to\infty$, and for a map $t\mapsto M(t)$ with $\lim_{t\to\infty}M(t)=M$. In other words, at each positive time $t$ the solution is a perturbation of one of the stationary states $f_p$; this is adjusted at each time by ``tuning'' the parameter $M(t)$ (and therefore the asymptotic decay \eqref{intro2} of the tail), and eventually approaches the stationary state corresponding to the initial mass $M$.

The existence of the map $t\mapsto M(t)$ is proved by means of a fixed point argument. A relevant part of the proof consists in the analysis of a discrete equation of the form
\begin{equation} \label{intro3}
\frac{\de y_n}{\de t} = 2^{\beta n} \Bigl[ y_{n-1}-y_n - \sigma_n\bigl( y_n-y_{n+1}\bigr) \Bigr],
\qquad\qquad n\in\Z\quad (\beta>1),
\end{equation}
which appears as the linearization of the evolution equation for the coefficients of $h(\xi,t)$. Hence the study of this problem requires the development of a well-posedness theory for \eqref{intro3} and the detailed characterization of the asymptotic behaviour of solutions to \eqref{intro3} as $n\to\infty$. In a sense, these results can be thought as the analogue of the H\"older regularity theory for parabolic equations, in this discrete setting and at the single point $n=\infty$.

In a companion paper \cite{BNVe} we will actually prove the stability of the solutions $f_p$: starting from an initial datum which is supported in the union of infinitely many small intervals around the points $\xi_n=2^{n+\rho}$, we show the existence of a corresponding solution to \eqref{eq:coagfrag} which actually converges to the stationary solution \eqref{intro1} with the same mass. This results from the combination of two main effects: first, the phenomenon of the stability of the exponential decay of the tails, discussed in this paper; second, a tendency to aggregation leading to concentration into peaks, which can be quantified in terms of a quadratic functional representing some kind of variance. We refer to the introduction of \cite{BNVe} for more details.

The careful analysis of the tails of the solutions performed in this paper, besides being interesting by itself, is instrumental in the proof of the full result contained in \cite{BNVe}. In particular the study of the linearized system \eqref{intro3} plays a fundamental role. Finally, one of the goals of this paper is also to give an insight of the strategy of the proof of the result in \cite{BNVe}, free of the technical details due to the presence of the dispersion around the peaks.

Our motivation of the present study actually arose from  an investigation of self-similar solutions to pure coagulation equations. While it was generally expected that
for kernels with homogeneity smaller than or equal to one 
solutions converge to a uniquely determined self-similar profile, proofs of corresponding results are only available for the constant and the additive kernel 
(see \cite{MP04} and references therein). In a study based on formal stability arguments and numerical simulations \cite{HNV16} it has however been noticed that for kernels that concentrate
near the diagonal solutions tend to aggregate in peaks. 
We expect that the analysis developed in this paper can also  be extended  to the pure coagulation equation, for coagulation  kernels as considered here.
Indeed, a family of solutions supported for each time in a set of Dirac masses, similar to \eqref{intro1}, can be constructed 
also for the pure coagulation equation; in this case the solutions, in self-similar variables, are not stationary, but periodic in time.
 One of the main difficulties consists then in proving a precise characterization of the tails, as in \eqref{intro2}. This would allow to repeat 
 the strategy devised here and in \cite{BNVe} and to prove the stability of such solutions. When the kernel is purely diagonal, a related result has been 
 established in \cite{LNV18}.

\medskip
\noindent\textbf{Structure of the paper.} The paper is organized as follows. In Section~\ref{sect:weaksol} we formulate the precise assumptions on the coagulation and fragmentation kernels, and we prove a well-posedness result for (a suitable weak formulation of) the equation \eqref{eq:coagfrag}. In Section~\ref{sect:variables} we reformulate the equation by means of a convenient change of variable. The family of stationary solutions in the form \eqref{intro1} is constructed in Section~\ref{sect:stationary}. In Section~\ref{sect:linear} we state the regularity result on the linearized equation \eqref{intro3}, whose proof is postponed to Appendix~\ref{sect:appendix}. Finally, in Section~\ref{sect:stability1} we prove the main result of this paper (Theorem~\ref{thm:dirac}), which shows the stability of the stationary solutions among the class of solutions concentrated in peaks.


\section{Assumptions, weak formulation and well-posedness} \label{sect:weaksol}

We now formulate the precise assumptions on the coagulation and fragmentation kernels, that are valid throughout the paper. Afterwards we define the notion of weak solution to \eqref{eq:coagfrag}, and we prove a standard existence result.

We assume the following: the coagulation kernel is supported near the diagonal and has the form
\begin{equation} \label{kernel1}
K(\xi,\eta) = \frac{1}{\xi+\eta}k\Bigl(\frac{\xi+\eta}{2}\Bigr) Q\Bigl(\frac{2\eta}{\xi+\eta}-1\Bigr)\,,
\end{equation}
where $k\in C([0,\infty))$, $k>0$, satisfies the growth conditions
\begin{align}
\qquad\qquad\qquad&k(\xi)\sim\xi^{\alpha+1} & &\text{as }\xi\to\infty, \quad\alpha\in(0,1), & \qquad\qquad\qquad \label{kernel2}\\
\qquad\qquad\qquad&k(\xi)= k_0 + O(\xi^{\bar{\alpha}}) & &\text{as }\xi\to0^+, \quad\bar{\alpha}>1, & \qquad\qquad\qquad \label{kernel2bis}
\end{align}
for some $k_0>0$, and $Q$ is a cut-off function such that
\begin{equation} \label{kernel3}
Q\in C(\R), \quad Q\geq0, \quad Q(0)=1, \quad \supp Q\subset\bigl(-{\textstyle\frac13},{\textstyle\frac13}\bigr), \quad Q(\xi)=Q(-\xi).
\end{equation}
The kernel $K$ has been written in the form \eqref{kernel1} to emphasize that it is close to the diagonal kernel in the sense of measures: indeed if we take $Q(s)=\delta(s)$ (where $\delta$ denotes the Dirac distribution at the origin) only particles with the same size coagulate, and we recover the diagonal kernel $K(\xi,\eta)=k(\xi)\delta(\xi-\eta)$. The exponent $\alpha$ represents the degree of homogeneity at infinity. The condition on the support of $Q$ guarantees that, for solutions concentrated in Dirac masses at points $\{2^n\}_{n\in\Z}$, the different peaks do not interact with each other; in particular
\begin{equation} \label{suppK0}
\supp K(\xi,\eta) \subset \Bigl\{ \frac12\xi < \eta < 2\xi \Bigr\}.
\end{equation}

As for the fragmentation kernel $\Gamma(\xi,\eta)$, we assume that
\begin{equation} \label{kernel4}
\Gamma(\xi,\eta) = \gamma(\xi)\delta(\xi-2\eta)\,,
\end{equation}
where $\gamma\in C(0,\infty)$, $\gamma(\xi)>0$, satisfies the growth conditions
\begin{align}
\qquad\qquad\qquad&\gamma(\xi)\sim\xi^{\beta} & &\text{as }\xi\to\infty, \quad\beta\in(1,2), & \qquad\qquad\qquad \label{kernel5}\\
\qquad\qquad\qquad&\gamma(\xi)= \gamma_0 + O(\xi^{\bar{\beta}}) & &\text{as }\xi\to0^+, \quad\bar{\beta}>1, & \qquad\qquad\qquad \label{kernel5bis}
\end{align}
for some $\gamma_0>0$.

\begin{remark} \label{rm:ass}
Some of the assumptions above (for instance, the condition $\beta<2$, see Remark~\ref{rm:ass2}) are of technical nature and could be probably removed. However, since our ultimate goal is to provide an example of solutions concentrating in peaks, we prefer to make stronger assumptions and keep the amount of technical details of the paper within reasonable limits.
\end{remark}

\subsection{Weak formulation and well-posedness}
We introduce a weak formulation of the equation in the space of positive Radon measures $f\in\measf$, that allows to consider solutions to \eqref{eq:coagfrag} concentrated in Dirac masses and to prove a general well-posedness result. In the following, with abuse of notation, we denote by $\int_A\phi(\xi)f(\xi)\de\xi$ the integral of $\phi$ on $A\subset(0,\infty)$ with respect to the measure $f$, also in the case that $f$ is not absolutely continuous with respect to the Lebesgue measure.
It will be also convenient to consider the space $\measftheta$ of positive measures $f\in\measf$ such that $\int_{(0,\infty)} (1+\xi^\theta)f(\xi)\de\xi<\infty$.

In order to formally derive the weak formulation of the equation, we multiply \eqref{eq:coagfrag} by a test function $\vphi(\xi)$ and we integrate over $(0,\infty)$:
\begin{equation*}
\partial_t\biggl(\int_0^\infty f(\xi,t)\vphi(\xi)\de\xi \biggr) = \int_{0}^\infty \coag[f](\xi,t)\vphi(\xi)\de\xi + \int_{0}^\infty \frag[f](\xi,t)\vphi(\xi)\de\xi \,.
\end{equation*}
For the coagulation part we have, using Fubini's Theorem,
\begin{align*}
\int_{0}^\infty \coag[f](\xi,t)\vphi(\xi)\de\xi
& = \frac{1}{2}\int_{0}^\infty\de\eta \int_{\eta}^{\infty} K(\xi-\eta,\eta) f(\xi-\eta,t)f(\eta,t)\vphi(\xi)\de\xi \\
&\qquad - \int_{0}^\infty\int_{0}^\infty K(\xi,\eta)f(\xi,t)f(\eta,t)\vphi(\xi)\de\xi\de\eta\\
& = \frac{1}{2}\int_{0}^\infty\int_{0}^\infty K(\xi,\eta)f(\xi,t)f(\eta,t)\Bigl(\vphi(\xi+\eta)-\vphi(\xi)-\vphi(\eta)\Bigr)\de\xi\de\eta \,.
\end{align*}
A similar computation for the fragmentation term, taking into account the diagonal structure \eqref{kernel4} of the kernel $\Gamma(\xi,\eta)$, shows that
\begin{align*}
\int_{0}^\infty \frag[f](\xi,t)\vphi(\xi)\de\xi
& = - \frac{1}{2}\int_{0}^\infty\int_{0}^\infty \Gamma(\xi+\eta,\eta)f(\xi+\eta,t)\Bigl(\vphi(\xi+\eta)-\vphi(\xi)-\vphi(\eta)\Bigr)\de\xi\de\eta \\
& = -\frac12\int_{0}^\infty\int_{0}^\infty \gamma(2\xi)f(2\xi,t)\Bigl(\vphi(\xi+\eta)-\vphi(\xi)-\vphi(\eta)\Bigr)\delta(\xi-\eta)\de\xi\de\eta \,.
\end{align*}
These formal identities motivate the following definition.

\begin{definition}[Weak solution] \label{def:weakf}
A map $f\in C([0,T];\measfbeta)$ is a \emph{weak solution} to \eqref{eq:coagfrag} in $[0,T]$ with initial condition $f_0\in\measf$ if for every $t\in[0,T]$
\begin{align} \label{weakf}
&\partial_t \biggl(\int_{(0,\infty)} f(\xi,t)\vphi(\xi)\de\xi \biggr) \\
&=\frac{1}{2}\int_{0}^\infty\int_{0}^\infty \Bigl[ K(\xi,\eta)f(\xi,t)f(\eta,t) - \gamma(2\xi)f(2\xi,t)\delta(\xi-\eta)\Bigr]\Bigl(\vphi(\xi+\eta)-\vphi(\xi)-\vphi(\eta)\Bigr)\de\xi\de\eta \nonumber
\end{align}
for every test function $\vphi\in C([0,\infty))$ with $\vphi(\xi)\lesssim\xi$ as $\xi\to\infty$, and $f(\cdot,0)=f_0$.
\end{definition}

Notice that this notion of weak solution guarantees the mass conservation property
\begin{equation} \label{massf}
\int_{(0,\infty)}\xi f(\xi,t)\de\xi = \int_{(0,\infty)}\xi f_0(\xi)\de\xi \qquad\text{for all $t>0$},
\end{equation}
as follows immediately by choosing the admissible test function $\vphi(\xi)=\xi$.  This is the reason why we require that the identity \eqref{weakf} holds for test functions with linear growth at infinity. In order for the fragmentation term on the right-hand side to be well-defined for such test functions, in view of the growth assumption \eqref{kernel5} we also need to assume the boundedness of the moment of order $\beta+1$ of the solution. The existence of a (global in time) weak solution for a suitable class of initial data is proved in Theorem~\ref{thm:wp} below.

A well-posedness result for the weak formulation of the coagulation-fragmentation equation \eqref{eq:coagfrag}, with a continuous coagulation kernel and a fragmentation measure, is proved for instance in \cite{EibWag00}; unfortunately we cannot refer directly to that result, as it requires some restriction on the growth of the coagulation and fragmentation rates, which do not apply to our setting. In order to show that a weak solution exists, it is convenient to introduce the following norm on the space of positive Radon measures $\measf$:
\begin{equation} \label{normf}
\|f\| := \sup_{\substack{n\in\Z\\n< 0}} \, \frac{1}{2^n}\int_{[2^n,2^{n+1})}f(\xi)\de\xi + \int_{[1,\infty)} f(\xi)\de\xi\,, \qquad f\in\measf.
\end{equation}
The reason for introducing this norm is that (taking into account the form of the steady states, see Section~\ref{sect:stationary}) we want to consider only solutions that are bounded as $\xi\to0^+$. Notice that the finiteness of the norm \eqref{normf} implies
\begin{equation} \label{normf2}
\int_{(0,\infty)} f(\xi)\de\xi = \sum_{n=-\infty}^{-1}\int_{[2^n,2^{n+1})}f(\xi)\de\xi + \int_{[1,\infty)}f(\xi)\de\xi \leq 2\|f\|.
\end{equation}
The existence of weak solutions with finite norm $\|\cdot\|$ and conserved mass, for a given initial datum, is guaranteed by the following theorem.

\begin{theorem}[Existence of weak solutions] \label{thm:wp}
Suppose that $f_0\in\measf$ satisfies
\begin{equation} \label{moment0}
\|f_0\|<\infty, \qquad \int_{(0,\infty)} \xi^\theta f_0(\xi)\de\xi <\infty
\end{equation}
for some $\theta>\beta+1$. Then there exists a weak solution $f$ to \eqref{eq:coagfrag} in $[0,\infty)$ with initial datum $f_0$, according to Definition~\ref{def:weakf}, which satisfies for all $T>0$
\begin{equation} \label{wpestf}
\sup_{0\leq t\leq T}\|f(t)\| \leq C(T, f_0), \qquad
\sup_{0\leq t\leq T}\int_{(0,\infty)} \xi^\theta f(\xi,t)\de\xi \leq C(T, f_0),
\end{equation}
where $C(T, f_0)$ denotes a constant depending on $T$, $f_0$, and on the properties of the kernels.
\end{theorem}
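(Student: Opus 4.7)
My plan is the classical truncation-compactness scheme adapted to the present measure setting. For $R>1$, I would introduce truncated kernels $K_R := K\chi_{\{\xi,\eta\leq R\}}$ and $\gamma_R := \gamma\chi_{\{\xi\leq R\}}$, and consider the corresponding truncated version of \eqref{weakf} posed on positive Radon measures supported in $(0,R]$. Because $K_R$ and $\gamma_R$ are bounded, the right-hand side defines a locally Lipschitz vector field in the total-variation norm, so a standard Picard iteration produces a unique global weak solution $f_R \in C([0,\infty);\measf)$ with $\supp f_R(\cdot,t)\subset(0,R]$ for every $t$.

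The heart of the argument is to derive the estimates \eqref{wpestf} uniformly in $R$. Mass conservation follows from \eqref{weakf} with $\vphi(\xi)=\xi$, since both integrands vanish identically. For the $\theta$-moment I would test with $\vphi(\xi)=\xi^\theta$: the fragmentation contribution, after the change of variable $\eta=2\xi$, equals
\begin{equation*}
-\frac{2^\theta-2}{2^{\theta+2}}\int_0^\infty \gamma(\eta)\eta^\theta f_R(\eta,t)\de\eta
\leq -c\int_0^\infty \eta^{\theta+\beta}f_R(\eta,t)\de\eta + C,
\end{equation*}
using \eqref{kernel5}--\eqref{kernel5bis}. For the coagulation contribution I would exploit the support property \eqref{suppK0}, the bound $K(\xi,\eta)\lesssim \xi^\alpha$ on $\supp K$ (from \eqref{kernel1}--\eqref{kernel2bis}), and the elementary estimate $(\xi+\eta)^\theta-\xi^\theta-\eta^\theta \leq C_\theta\xi^\theta$ on $\{\xi/2<\eta<2\xi\}$, to control it by $CM \int \xi^{\theta+\alpha-1} f_R(\xi)\de\xi$, where mass conservation has been used to estimate $\int_{\xi/2}^{2\xi}f_R(\eta)\de\eta\leq CM/\xi$ for large $\xi$. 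Since $\alpha<1$, splitting at $\xi=1$ and using the norm \eqref{normf} near zero yields a bound of the form $C(M_\theta + \|f_R\|)$, and a Gronwall argument (combined with a separate estimate on $\|f_R\|$, which is easier since coagulation cannot feed small intervals and only the fragmentation of $[2^{n+1},2^{n+2})$ can feed $[2^n,2^{n+1})$) closes the loop.

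The final step is compactness. The $\theta$-moment bound gives tightness at infinity, the norm bound gives boundedness of the total variation near the origin, and the uniform bound on the right-hand side of \eqref{weakf} tested against elements of a dense family yields equicontinuity of $t\mapsto\int\vphi f_R(\cdot,t)\de\xi$. An Arzel\`a--Ascoli type argument then extracts a subsequence converging in $C([0,T];\measf)$ with respect to the weak-$*$ topology to a limit $f$, which inherits both bounds in \eqref{wpestf}. Passing to the limit in \eqref{weakf} is routine: the fragmentation part is a single integral against the continuous weight $\gamma(2\xi)(\vphi(2\xi)-2\vphi(\xi))$, and the coagulation part is an integral over a bounded-diagonal region against a continuous kernel, with the tail controlled by the moment bound.

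I expect the main difficulty to be the $\theta$-moment estimate: absorbing the coagulation contribution either into a Gronwall-friendly term in $M_\theta$ or into the fragmentation dissipation, uniformly in $R$. The interplay between $\alpha\in(0,1)$, $\beta\in(1,2)$, and $\theta>\beta+1$ is favourable, but it requires a careful interpolation between moments and an exploitation of the norm \eqref{normf} to handle the low-$\xi$ part of the coagulation integral, where the naive bound by $M_\theta$ is not available. A secondary care is needed to ensure that the limit $f$ lies in $C([0,T];\measfbeta)$ as required by Definition~\ref{def:weakf}, but this is a direct consequence of the uniform $(\beta+1)$-moment bound guaranteed by $\theta>\beta+1$.
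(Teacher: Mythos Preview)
Your overall scheme (truncate, uniform estimates, compactness) is exactly the paper's, but there is a genuine gap in the local existence step. You assert that the truncated kernel $K_R = K\chi_{\{\xi,\eta\le R\}}$ is bounded, hence gives a locally Lipschitz vector field in total variation. It is not: from \eqref{kernel1}--\eqref{kernel2bis} one has $K(\xi,\eta)\sim k_0/(\xi+\eta)$ on the diagonal as $\xi,\eta\to0^+$, so truncating only at infinity leaves the singularity at the origin intact. Concretely, if $f=\delta_\epsilon$ then the coagulation gain is $\tfrac12 K(\epsilon,\epsilon)\delta_{2\epsilon}$ with $K(\epsilon,\epsilon)\sim k_0/(2\epsilon)\to\infty$, so the coagulation operator is unbounded on $\measf$ with the total-variation norm. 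The same oversight reappears in your moment estimate, where the claimed bound $K(\xi,\eta)\lesssim\xi^\alpha$ on $\supp K$ holds only for large $\xi$; near the origin the correct bound is $K(\xi,\eta)\lesssim 1/\xi$.

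The paper deals with this precisely through the weighted norm \eqref{normf}: on intervals $[2^n,2^{n+1})$ with $n<0$ the factor $2^{-n}$ compensates the $1/\xi$ in $K$, and one checks by hand that both the loss term $A_R[f](\xi)=\int_{\xi/2}^{2\xi}K(\xi,\eta)f(\eta)\,d\eta+\tfrac14\gamma(\xi)$ and the gain term $B_R[f]$ are bounded and Lipschitz in $\|\cdot\|$ (Step~1 of the paper's proof). The fixed point is then run in the mild (Duhamel) formulation using this norm. Once local existence is available, the uniform $\|\cdot\|$- and $M_\theta$-bounds and the passage to the limit proceed essentially as you outline; the paper does not use the fragmentation dissipation to close the $M_\theta$ estimate, but simply bounds the coagulation contribution by $C(T,f_0)M_\theta$ and applies Gr\"onwall, which is enough. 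Your plan becomes correct once you replace the total-variation norm by $\|\cdot\|$ in the local existence step and correct the small-$\xi$ behaviour of $K$ accordingly.
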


\begin{proof}
In oder to prove the result it is convenient to truncate the kernels and write the equation in integral form; in this way the existence of a (strong) solution can be obtained by a standard fixed point argument. For any given $R>1$, we consider a cut-off function $\psi_R\in \cc^1([0,R))$, $\psi_R\equiv1$ in $[0,R-1]$, and we define
\begin{equation} \label{truncation}
K_R(\xi,\eta) := K(\xi,\eta)\psi_R(\xi),
\qquad
\Gamma_R(\xi,\eta) := \Gamma(\xi,\eta)\psi_R(\xi),
\end{equation}
and we notice that \eqref{eq:coagfrag} (with the truncated kernels) can be rewritten as
\begin{equation} \label{truncation2}
\partial_tf(\xi,t) + A_R[f](\xi,t)f(\xi,t) = B_R[f](\xi,t)
\end{equation}
where (recall \eqref{suppK0} and \eqref{kernel4})
\begin{equation} \label{truncation3}
A_R[f](\xi,t)
:= \biggl(\int_{\frac12\xi}^{2\xi} K(\xi,\eta)f(\eta,t)\de \eta + \frac14\gamma(\xi) \biggr) \psi_R(\xi)\,,
\end{equation}
\begin{equation} \label{truncation4}
B_R[f](\xi,t)
:= \frac12\int_{\frac13\xi}^{\frac23\xi} K(\xi-\eta,\eta)\psi_R(\xi-\eta)f(\xi-\eta,t)f(\eta,t)\de \eta + \gamma(2\xi)f(2\xi,t)\psi_R(2\xi) \,.
\end{equation}

\smallskip\noindent\textit{Step 1.} Suppose that $f_0\in\measf$ satisfies $\|f_0\|<\infty$. We show that there exist a time $T=T(R)>0$ and a map $f_R\in C([0,T];\measf)$ which obeys for every $t\in[0,T]$ the identity
\begin{multline} \label{smolmild}
f_R(\xi,t) = f_0(\xi)\exp\biggl(-\int_0^t A_R[f_R](\xi,s)\de s \biggr) \\
+ \int_0^t \exp\biggl(-\int_s^t A_R[f_R](\xi,r)\de r \biggr) B_R[f_R](\xi,s)\de s =: \mathcal{T}_R[f_R](\xi,t)
\end{multline}
in the sense of measures. The goal is to show that the operator $\mathcal{T}_R$ maps the set
\begin{equation*}
\mathcal{U} := \Bigl\{ f\in C([0,T];\measf) \,:\, \sup_{t\in[0,T]}\|f(t)\| \leq 2\|f_0\| \Bigr\}
\end{equation*}
into itself, and is strongly contractive if $T$ is sufficiently small (depending on $R$).

Let $f\in\mathcal{U}$. We first estimate the term $A_R[f]$: for $\xi\in[0,1]$ we have, by \eqref{kernel1}, 
\begin{equation*}
\begin{split}
A_R[f](\xi,t)
& \leq  \int_{\frac12\xi}^{2\xi}K(\xi,\eta)f(\eta,t)\de\eta + \frac14\bigl(\max_{\xi\in[0,1]}\gamma(\xi)\bigr) \\
& \leq C \biggl( \frac{1}{\xi}\int_{\frac12\xi}^{2\xi}f(\eta,t)\de\eta + 1 \biggr)
\leq C \bigl( \|f(\cdot,t)\| + 1 \bigr)
\end{split}
\end{equation*}
(where $C$ is a positive constant depending only on the kernels), while for $\xi\geq1$ it is easily seen (recall \eqref{normf2}) that
\begin{equation*}
A_R[f](\xi,t) \leq \bigl(\max_{\frac12\leq\xi,\eta\leq 2R}K(\xi,\eta)\bigr)\int_{(0,\infty)}f(\eta,t)\de\eta +  \frac14\bigl(\max_{\xi\in[0,R]}\gamma(\xi)\bigr)
\leq C_R\bigl( \|f(\cdot,t)\|+1\bigr) \,.
\end{equation*}
Combining the previous bounds we therefore obtain
\begin{equation} \label{truncation5}
A_R[f](\xi,t) \leq C_R \bigl(\|f_0\|+1) \,.
\end{equation}
We next estimate the term $B_R[f]$ in the norm $\|\cdot\|$: for all $n\in\Z$, $n<0$, we have
\begin{equation*}
\begin{split}
\frac{1}{2^n}\int_{[2^n,2^{n+1})} B_R[f](\xi,t)\de\xi
&\leq \frac{C}{2^{n+1}}\int_{[2^n,2^{n+1})}\frac{\de\xi}{\xi}\int_{[\frac13\xi,\frac23\xi]} f(\xi-\eta,t)f(\eta,t)\de\eta \\
& \qquad + \bigl(\max_{\xi\in[0,2]}\gamma(\xi)\bigr) \frac{1}{2^{n+1}} \int_{[2^{n+1},2^{n+2})}f(\xi,t)\de\xi \\
&\leq \frac{C}{2^{n+1}}\int_{[2^{n-2},2^{n+1}]}\frac{\de\xi}{2^n}\int_{[2^{n-2},\xi)} f(\xi-\eta,t)f(\eta,t)\de\eta + C\|f(\cdot,t)\| \\
&\leq C\biggl(\frac{1}{2^{n+1}}\int_{[2^{n-2},2^{n+1}]}f(\eta,t)\de\eta \biggr) \biggl(\frac{1}{2^n} \int_{(0,2^{n+1}]} f(\zeta,t)\de\zeta\biggr) + C\|f_0\| \\
& \leq C\|f(\cdot,t)\|^2 + C\|f_0\| \leq C \bigl(\|f_0\|^2+\|f_0\|\bigr),
\end{split}
\end{equation*}
while, recalling once more \eqref{normf2},
\begin{equation*}
\begin{split}
\int_{[1,\infty)} B_R[f](\xi,t)\de\xi
&\leq \frac12\bigl(\max_{\frac13\leq\xi,\eta\leq 2R}K(\xi,\eta)\bigr)\int_{[1,\infty)}\de\xi\int_{\frac13\xi}^{\frac23\xi} f(\xi-\eta,t)f(\eta,t)\de\eta \\
& \qquad + \bigl(\max_{\xi\in[0,R]}\gamma(\xi)\bigr)\int_{[1,\infty)}f(\xi,t)\de\xi \\
&\leq C_R \biggl( \int_{[\frac13,\infty)}f(\eta,t)\de\eta\int_{(0,\infty)}f(\zeta,t)\de\zeta + \|f(\cdot,t)\| \biggr) \\
& \leq C_R \bigl( \|f(\cdot,t)\|^2 + \|f(\cdot,t)\| \bigr) \leq C_R \bigl(\|f_0\|^2+\|f_0\|\bigr) .
\end{split}
\end{equation*}
By combining the previous estimates we obtain
\begin{equation} \label{truncation6}
\|B_R[f](\cdot,t)\| \leq C_R (\|f_0\|^2+\|f_0\|)
\end{equation}
for some positive constant $C_R$ depending on $R$.

It follows that, for $f\in\mathcal{U}$, $A_R[f](\cdot,t)$ is continuous and bounded, and $B_R[f]$ is a bounded, positive measure (it is a weighted convolution of measures); hence $\mathcal{T}_R[f]$ is well-defined. 
Moreover, by combining \eqref{truncation5} and \eqref{truncation6} we find
\begin{equation*}
\sup_{t\in[0,T]} \| \mathcal{T}_R[f](\cdot,t) \| \leq \|f_0\| + C_R (\|f_0\|^2+\|f_0\|)T,
\end{equation*}
which shows that $\mathcal{T}_R[f]\in\mathcal{U}$ for every $f\in\mathcal{U}$, provided that $T=T(R)$ is chosen small enough.

A similar argument also shows that for all $f_1,f_2\in\mathcal{U}$
\begin{equation*}
\big| A_R[f_1](\xi,t) - A_R[f_2](\xi,t) \big| \leq C_R \| f_1(\cdot,t)-f_2(\cdot,t) \|,
\end{equation*}
\begin{equation*}
\big\| B_R[f_1](\cdot,t) - B_R[f_2](\cdot,t) \big\| \leq C_R \| f_1(\cdot,t)-f_2(\cdot,t) \|,
\end{equation*}
from which it follows that
\begin{equation*}
\sup_{t\in[0,T]}\| \mathcal{T}_R[f_1](\cdot,t) - \mathcal{T}_R[f_2](\cdot,t) \| \leq C_R T \sup_{t\in[0,T]}\|f_1(\cdot,t)-f_2(\cdot,t)\|.
\end{equation*}
Therefore $\mathcal{T}_R$ is strongly contractive in $\mathcal{U}$ if $T$ is sufficiently small (depending on $R$ and on the initial datum), and Banach's fixed point theorem yields the existence of a unique solution $f_R$ in $\mathcal{U}$ to the equation \eqref{smolmild} in $[0,T]$.

\smallskip\noindent\textit{Step 2.}
We now observe that $f_R$ is a weak solution to \eqref{truncation2}. Indeed we can test \eqref{smolmild} against any compactly supported test function $\vphi\in \cc([0,\infty))$:
\begin{multline*}
\int_{(0,\infty)}\vphi(\xi)f_R(\xi,t)\de\xi
= \int_{(0,\infty)}\exp\biggl(-\int_0^t A_R[f_R](\xi,s)\de s \biggr)\vphi(\xi)f_0(\xi)\de\xi \\
+ \int_0^t\int_{(0,\infty)}\exp\biggl(-\int_s^t A_R[f_R](\xi,r)\de r \biggr) \vphi(\xi) B_R[f_R](\xi,s)\de\xi \de s \,.
\end{multline*}
By taking $\psi\in C^1([0,T])$, multiplying the previous equation by $\partial_t{\psi}$, and integrating in $[0,T]$, we obtain, after integration by parts,
\begin{multline*}
\psi(T)\int_{(0,\infty)}\vphi(\xi)f_R(\xi,T)\de\xi - \psi(0)\int_{(0,\infty)}\vphi(\xi)f_0(\xi)\de\xi - \int_0^T \partial_t\psi\biggl(\int_{(0,\infty)}\vphi(\xi)f_R(\xi,t)\de\xi\biggr)\de t \\
= -\int_0^T \psi(t)\biggl( \int_{(0,\infty)}A_R[f_R](\xi,t)\vphi(\xi)f_R(\xi,t)\de\xi - \int_{(0,\infty)}\vphi(\xi)B_R[f_R](\xi,t)\de\xi \biggr)\de t\,,
\end{multline*}
which is the weak formulation of \eqref{truncation2}. Then, passages similar to those preceding Definition~\ref{def:weakf} give
\begin{equation} \label{weakf2}
\begin{split}
\int_{(0,\infty)} & \vphi(\xi)f_R(\xi,T)\de\xi - \int_{(0,\infty)} \vphi(\xi)f_0(\xi)\de\xi \\
&=\frac{1}{2}\int_0^T\int_0^\infty\int_0^\infty K_R(\xi,\eta)f_R(\xi,t)f_R(\eta,t)\Bigl(\vphi(\xi+\eta)-\vphi(\xi)-\vphi(\eta)\Bigr)\de\xi\de\eta\de t \\
&\qquad - \frac12\int_0^T\int_0^\infty \gamma(2\xi)\psi_R(2\xi)f_R(2\xi,t)\Bigl(\vphi(2\xi)-2\vphi(\xi)\Bigr)\de\xi\de t .
\end{split}
\end{equation}
Observe also that, by an approximation argument, the identity \eqref{weakf2} actually holds for all test functions $\vphi\in C([0,\infty))$, not necessarily compactly supported, growing at most as $\vphi(\xi)\lesssim \xi^{\theta}$ as $\xi\to\infty$: indeed, the integral $\int_{(0,\infty)}f_0(\xi)\vphi(\xi)\de\xi$ is finite for such test functions, in view of the assumption \eqref{moment0}, and also the right-hand side of \eqref{weakf2} is well-defined, as the truncated kernels are compactly supported.

\smallskip\noindent\textit{Step 3.} 
We can now use the weak formulation \eqref{weakf2} to show that we can extend the solution $f_R$ to every positive time $t$, and to obtain uniform estimates on the moments
\begin{equation} \label{moments}
M_{r}(f_R(t)):=\int_{(0,\infty)}\xi^r f_R(\xi,t)\de\xi
\end{equation}
for $r\leq\theta$. Notice first that, by taking $\vphi(\xi)=\xi$ as test function in \eqref{weakf2}, we immediately obtain that the total mass of the solution is conserved as long as the solution exists: $M_1(f_R(t))\equiv M_1(f_0)$, where the initial mass $M_1(f_0)$ is finite in view of the assumptions \eqref{moment0}.

The local solution $f_R$ can be extended in time as long as an estimate of the form
\begin{equation} \label{truncation7}
\sup_{t\in[0,T]} \|f_R(\cdot,t)\| \leq C(R,T)
\end{equation}
holds. Since the constant function $\vphi\equiv1$ is admissible in \eqref{weakf2}, we have (by positivity of the kernels and of $f_R$)
\begin{equation*}
\begin{split}
\int_{(0,\infty)} f_R(\xi,T)\de\xi
& \leq \int_{(0,\infty)} f_0(\xi)\de\xi +\frac12 \int_0^T\int_{(0,\infty)}\gamma(2\xi)\psi_R(2\xi)f_R(2\xi,t)\de\xi\de t \\
& \leq \int_{(0,\infty)} f_0(\xi)\de\xi +\frac14\bigl(\max_{\xi\in[0,R]}\gamma(\xi)\bigr) \int_0^T\int_{(0,\infty)} f_R(\xi,t)\de\xi\de t,
\end{split}
\end{equation*}
and by a Gr\"onwall argument we obtain
\begin{equation} \label{truncation8}
\sup_{t\in[0,T]}\int_{(0,\infty)}f_R(\xi,t)\de\xi \leq C(R,T)\int_{(0,\infty)}f_0(\xi)\de\xi\,.
\end{equation}

By taking a sequence of test functions approaching the characteristic function of the interval $[0,2^{n}]$ ($n\in\Z$, $n\leq0$), and observing that for such functions the coagulation term gives a negative contribution, we find the estimate
\begin{equation*}
\begin{split}
\frac{1}{2^n}\int_{[0,2^{n}]} f_R(\xi,T)\de\xi
& \leq \frac{1}{2^n}\int_{[0,2^{n}]} f_0(\xi)\de\xi + \int_0^T \frac{1}{2^n}\int_{[0,2^{n}]}\gamma(2\xi)f_R(2\xi,t)\de\xi\de t \\
& \leq \|f_0\| + \bigl(\max_{\xi\in[0,2]}\gamma(\xi)\bigr) \int_0^T \frac{1}{2^{n+1}}\int_{[0,2^{n+1}]} f_R(\xi,t)\de\xi\de t \\
& \leq \|f_0\| + C\int_0^T \sup_{\substack{k\in\Z\\k\leq 0}} \frac{1}{2^{k}}\int_{[0,2^{k}]} f_R(\xi,t)\de\xi\de t + C\int_0^T\int_{[1,2]}f_R(\xi,t)\de\xi\de t \\
& \leq \|f_0\| + C\int_0^T \sup_{\substack{k\in\Z\\k\leq 0}} \frac{1}{2^{k}}\int_{[0,2^{k}]} f_R(\xi,t)\de\xi\de t + C M_1(f_0)T,
\end{split}
\end{equation*}
where $C$ is a positive constant, depending only on $\gamma$, and we used the conservation of mass of the solution in the last passage. Therefore a Gr\"onwall argument yields
\begin{equation} \label{truncation9}
\sup_{t\in[0,T]}\sup_{\substack{n\in\Z\\n\leq 0}}\frac{1}{2^n}\int_{[0,2^{n}]}f_R(\xi,t)\de\xi \leq C(T) \bigl(\|f_0\| + M_1(f_0) \bigr),
\end{equation}
where the constant $C(T)$ is now independent of $R$ (it depends only on $T$ and on the maximum of $\gamma$ in the interval $[0,2]$).
The combination of \eqref{truncation8} and \eqref{truncation9} implies in particular the estimate \eqref{truncation7} and, in turn, the global existence of the solution $f_R$.

We now consider the moment $M_\theta$. The (admissible) choice $\vphi(\xi)=\xi^\theta$ as test function in \eqref{weakf2}, using the elementary inequalities
\begin{equation*}
\vphi(\xi+\eta)-\vphi(\xi)-\vphi(\eta) \leq (2^{\theta-1}-1)(\xi^\theta+\eta^\theta), \qquad \vphi(2\xi)-2\vphi(\xi) \geq 0,
\end{equation*}
the assumption \eqref{suppK0} on the support of the coagulation kernel, the conservation of mass, and the estimate \eqref{truncation9}, gives
\begin{align*}
M_{\theta}(f_R(T)) - M_{\theta}(f_0)
& \leq (2^{\theta-1}-1) \int_0^T \de t\int_0^\infty\de\xi \int_{\frac12\xi}^{2\xi} K(\xi,\eta)f_R(\xi,t)f_R(\eta,t)\xi^\theta\de\eta \\
& \leq C\int_0^T\de t \int_0^1 \de\xi \, f_R(\xi,t)\xi^{\theta-1} \int_{\frac12\xi}^{2\xi} f_R(\eta,t)\de\eta \\
& \qquad + C\int_0^T\de t \int_1^{\infty} \de\xi \, f_R(\xi,t)\xi^{\theta} \int_{\frac12\xi}^{2\xi} \eta^\alpha f_R(\eta,t)\de\eta \\
& \leq C(T,f_0)\int_0^T\de t \int_0^1 f_R(\xi,t)\xi^{\theta}\de\xi +  C M_1(f_0) \int_0^T \de t \int_1^{\infty} f_R(\xi,t)\xi^\theta \de\xi \\
& \leq C(T,f_0) \int_0^T M_\theta(f_R(t))\de t
\end{align*}
for some constant $C(T,f_0)$ depending on the kernels, on $T$, and on the initial datum $f_0$ (independent of $R$). Hence we obtain the uniform estimate
\begin{equation} \label{truncation10}
\sup_{t\in[0,T]} M_\theta(f_R(t)) \leq C(T,f_0) \,,
\end{equation}
and in turn, by \eqref{truncation9} and \eqref{truncation10},
\begin{equation} \label{truncation11}
\sup_{t\in[0,T]} \|f_R(\cdot,t)\| \leq C(T,f_0) \,.
\end{equation}

\smallskip\noindent\textit{Step 4.} We finally obtain a weak solution to \eqref{eq:coagfrag} by passing to the limit as $R\to\infty$. We give only a sketch of the proof, which is relatively standard (a similar argument is used for instance in \cite[Section~4]{LauMis02}).

We first show the existence of a subsequence $R_j\to\infty$ along which the measures $f_{R_j}(\cdot,t)$ narrowly converge to some limit measure $f(\cdot,t)$ for every $t>0$, that is,
\begin{equation} \label{truncation12}
\int_{(0,\infty)} \vphi(\xi) f_{R_j}(\xi,t)\de\xi \to \int_{(0,\infty)} \vphi(\xi) f(\xi,t)\de\xi \qquad \text{for every $\vphi\in C_{\mathrm b}(0,\infty)$.}
\end{equation}
Recall that the narrow convergence is induced by a distance on the space of Borel probability measures (see \cite[Remark~5.1.1]{AGS}). Notice that, for fixed $t$, thanks to the uniform bound on the moments \eqref{truncation10}, the family of measures $\{ f_R(\cdot,t) \}_R$ is tight and hence relatively compact with respect to narrow convergence. We also need to show equicontinuity of the family $\{ f_{R} \}_R$: by \eqref{weakf2} we have, for all $\vphi\in C_{\mathrm b}((0,\infty))$ and all $0<s<t\leq T$,
\begin{equation*}
\begin{split}
\bigg| & \int_{(0,\infty)} \vphi(\xi) \bigl( f_{R}(\xi,t)-f_{R}(\xi,s) \bigr) \de\xi \bigg| \\
& \leq \bigg| \frac{1}{2}\int_s^t\int_0^\infty\int_0^\infty K_{R}(\xi,\eta)f_{R}(\xi,\tau)f_{R}(\eta,\tau)\Bigl(\vphi(\xi+\eta)-\vphi(\xi)-\vphi(\eta)\Bigr)\de\xi\de\eta\de\tau\\
&\qquad - \frac12\int_s^t\int_0^\infty \gamma(2\xi)\psi_{R}(2\xi)f_{R}(2\xi,\tau)\Bigl(\vphi(2\xi)-2\vphi(\xi)\Bigr)\de\xi\de\tau \bigg| \\
& \leq C\|\vphi\|_\infty \int_s^t \biggl(\int_0^1 \de\xi \, f_{R}(\xi,\tau)\frac{1}{\xi}\int_{\frac12\xi}^{2\xi} f_{R}(\eta,\tau)\de\eta + \int_1^{\infty} \de\xi \, f_{R}(\xi,\tau) \xi^\alpha \int_{\frac12\xi}^{2\xi} f_{R}(\eta,\tau)\de\eta\\
& \qquad + \int_0^1 f_{R}(\xi,\tau)\de\xi + \int_1^\infty\xi^\beta f_{R}(\xi,\tau)\de\xi\biggr) \de\tau \\
& \leq C\|\vphi\|_\infty \sup_{\tau\in[0,T]} \Bigl( \|f_{R}(\cdot,\tau)\|^2 + \|f_{R}(\cdot,\tau)\| M_1(f_{R}(\tau)) + \|f_{R}(\cdot,\tau)\| + M_\beta(f_{R}(\tau)) \Bigr) |t-s| \\
& \leq C(T,f_0) \|\vphi\|_\infty |t-s|,
\end{split}
\end{equation*}
where $C$ is independent of $R$ (the last estimate follows thanks to the uniform bounds \eqref{truncation10}, \eqref{truncation11}). Hence the convergence \eqref{truncation12} for every $t>0$ follows from Ascoli-Arzel\`a Theorem and a diagonal argument. Notice also that the uniform bound on the moments \eqref{truncation10} is preserved in the limit.

Finally, we need to show that we can pass to the limit in the weak formulation of the equation \eqref{weakf2} as $R_j\to\infty$ and recover \eqref{weakf}, for every test function $\vphi\in C([0,\infty))$ with $\vphi(\xi)\lesssim\xi$ as $\xi\to\infty$. Thanks to the convergence \eqref{truncation12} it is easily seen that this can be done for compactly supported test function: we obtain that $f$ satisfies \eqref{weakf2} for every $\vphi\in\cc([0,\infty))$.
Eventually, if $\vphi\in C([0,\infty))$ with $\vphi(\xi)\lesssim\xi$ as $\xi\to\infty$, we consider a cut-off $\vphi_N\in\cc([0,N+1))$ at large distance, with $\vphi_N\equiv\vphi$ in $[0,N]$, $\vphi_N\to\vphi$ as $N\to\infty$; in passing to the limit in \eqref{weakf2}, the only problematic term is the fragmentation term, which can be handled thanks to the uniform bound on the moment $M_{\theta}$:
\begin{align*}
\bigg| \int_0^T\int_0^\infty \gamma(\xi)f(\xi,t)\Bigl(\vphi(\xi)-\vphi_N(\xi)\Bigr)\de\xi\de t \bigg|
& \leq C \int_0^T\int_N^\infty \xi^{\beta+1}f(\xi,t)\de\xi\de t \\
& \leq \frac{C}{N^{\theta-(\beta+1)}} \int_0^T M_\theta(f(t))\de t \to 0 \qquad\text{as }N\to\infty.
\end{align*}
We conclude that $f$ is a weak solution in the sense of Definition~\ref{def:weakf}.
\end{proof}


\section{Logarithmic variables} \label{sect:variables}

For the analysis of the stability of solutions concentrated in peaks, it is convenient to go over to logarithmic variables, which will be used along the rest of the paper: we set
\begin{equation}\label{variables}
g(x,t) := \xi f(\xi,t), \qquad \xi=2^{x}.
\end{equation}
After an elementary change of variable, \eqref{eq:coagfrag} takes the form
\begin{equation} \label{eq:coagfrag2}
\partial_t g(x,t) = \coag[g](x,t) + \frag[g](x,t)\,,
\end{equation}
where the coagulation operator $\coag$ and the fragmentation operator $\frag$ are now given by
\begin{multline} \label{coag2}
\coag[g](x,t) := \frac{\ln2}{2}\int_{-\infty}^{x} \frac{2^xK(2^x-2^y,2^y)}{2^x-2^y} g\Bigl(\frac{\ln(2^x-2^y)}{\ln2},t\Bigr)g(y,t)\de y \\
- \ln2\int_{-\infty}^\infty K(2^x,2^y)g(x,t)g(y,t)\de y\,,
\end{multline}
\begin{multline} \label{frag2}
\frag[g](x,t) := \ln2\int_{-\infty}^\infty \frac{2^{x+y}\Gamma(2^x+2^y,2^y)}{2^x+2^y} g\Bigl(\frac{\ln(2^x+2^y)}{\ln2},t\Bigr)\de y \\
- \frac{\ln2}{2}\int_{-\infty}^x \Gamma(2^x,2^y)g(x,t)2^y\de y \,,
\end{multline}
respectively. Notice that the conservation of mass \eqref{massf} reads, in terms of $g$,
\begin{equation}\label{massg}
\int_{\R} 2^{x}g(x,t)\de x =\int_{\R}2^xg(x,0)\de x\,.
\end{equation}
We can rephrase the notion of weak solution to \eqref{eq:coagfrag} in the new variables \eqref{variables} as follows. As before, this definition guarantees that a weak solution always satisfies \eqref{massg}.

\begin{definition}[Weak solution] \label{def:weakg}
A map $g\in C([0,T];\measgbeta)$ is a \emph{weak solution} to \eqref{eq:coagfrag2} in $[0,T]$ with initial condition $g_0\in\measg$ if for every $t\in[0,T]$
\begin{equation} \label{weakg}
\begin{split}
\partial_t\biggl(\int_{\R} &g(x,t)\vphi(x)\de x \biggr) \\
& = \frac{\ln2}{2}\int_{\R}\int_{\R} K(2^y,2^z)g(y,t)g(z,t)\biggl[\vphi\Bigl(\frac{\ln(2^y+2^z)}{\ln2}\Bigr) - \vphi(y) - \vphi(z) \biggr]\de y \de z \\
& \qquad - \frac14\int_{\R} \gamma(2^{y+1}) g(y+1) \bigl[\vphi(y+1) - 2\vphi(y) \bigr]\de y
\end{split}
\end{equation}
for every test function $\vphi\in C(\R)$ such that $\lim_{x\to-\infty}\vphi(x)<\infty$ and $\vphi(x)\lesssim 2^x$ as $x\to\infty$, and $g(\cdot,0)=g_0$.
\end{definition}

It is convenient to introduce a notation for the right-hand side of the weak equation \eqref{weakg}, evaluated on a given test function $\vphi$: we define the operators
\begin{align} 
B_{\mathrm c}[g,g;\vphi]
& :=\frac{\ln2}{2}\int_{\R}\int_{\R} K(2^y,2^z)g(y)g(z)\biggl[\vphi\Bigl(\frac{\ln(2^y+2^z)}{\ln2}\Bigr) - \vphi(y) - \vphi(z) \biggr]\de y \de z \,, \label{rhsweakc} \\
B_{\mathrm f}[g;\vphi]
&:= \frac14\int_{\R} \gamma(2^{y+1}) g(y+1) \bigl[\vphi(y+1) - 2\vphi(y) \bigr]\de y \,.  \label{rhsweakf}
\end{align}

\begin{remark} \label{rm:supp}
Notice that, in view of \eqref{suppK0}, the coagulation kernel $K$ evaluated at the point $(2^y,2^z)$ is supported in the region
\begin{equation} \label{suppK}
\supp K(2^y,2^z) \subset \bigl\{ |y-z|<1 \bigr\} \,.
\end{equation}
\end{remark}

For $g\in\measg$ we can introduce the following norm, corresponding to \eqref{normf}:
\begin{equation} \label{normg}
\|g\| :=  \sup_{\substack{n\in\Z\\n< 0}} \, \frac{1}{2^n}\int_{[n,n+1)}g(x)\de x + \int_{[0,\infty)} g(x)\de x\,.
\end{equation}
For every initial datum $g_0\in\measg$ such that $\|g_0\|<\infty$ and $\int_{\R} 2^{\theta x}g(x)\de x<\infty$, for some $\theta>\beta+1$, Theorem~\ref{thm:wp} provides the existence of a global weak solution $g$ with conserved mass and satisfying for all $T>0$ the bounds
\begin{equation} \label{wpestg}
\sup_{t\in[0,T]}\|g(t)\| \leq C(T,g_0) \,, \qquad \sup_{t\in[0,T]}\int_{\R}2^{\theta x}g(x,t)\de x \leq C(T,g_0) \,.
\end{equation}


\section{Stationary solutions} \label{sect:stationary}

In this section we show the existence of a family of stationary solutions of \eqref{eq:coagfrag2} supported in a set of Dirac masses at integer distance, with a fixed mass $M>0$. The solutions to \eqref{eq:coagfrag2} we are looking for have the form
\begin{equation}\label{peak1}
g_p(x) = \sum_{n=-\infty}^\infty a_n \delta(x-n-\rho)
\end{equation}
where $\rho\in[0,1)$ is a parameter fixing the shifting of the peaks with respect to the integers, with total mass
\begin{equation} \label{peak2}
\int_{\R}2^{x}g_p(x)\de x = \sum_{n=-\infty}^\infty 2^{n+\rho}a_n = M \,.
\end{equation}

By inserting the expression \eqref{peak1} into the weak formulation \eqref{weakg} of the equation, we see that $g_p$ is a stationary solution if the coefficients $a_n$ satisfy the recurrence equation
\begin{equation} \label{eq:stat}
a_{n+1} = \zeta_{n,\rho}a_n^2\,,
\qquad\text{where }
\zeta_{n,\rho} := \frac{\ln2}{2^{n+\rho}} \frac{k(2^{n+\rho})}{\gamma(2^{n+\rho+1})} \,.
\end{equation}
To obtain \eqref{eq:stat}, we observe that in view of Remark~\ref{rm:supp} there is no interaction among different peaks in the coagulation term, hence the right-hand side of \eqref{weakg} computed on $g_p$ becomes
\begin{equation*}
\begin{split}
B_{\mathrm c}&[g_p,g_p;\vphi] - B_{\mathrm f}[g_p;\vphi] \\
& = \sum_{n=-\infty}^\infty \biggl( \frac{\ln2}{2}K(2^{n+\rho},2^{n+\rho}) a_n^2 - \frac14\gamma(2^{n+\rho+1})a_{n+1} \biggr) \Bigl( \vphi(n+\rho+1)-2\vphi(n+\rho)\Bigr) \,.
\end{split}
\end{equation*}
This quantity vanishes for every test function $\vphi$ if the coefficients $a_n$ obey \eqref{eq:stat} (recall the expression \eqref{kernel1} of the coagulation kernel $K$).

To construct a solution to \eqref{eq:stat}, it is convenient to introduce new variables
\begin{equation} \label{peak6}
\alpha_n := \frac12\zeta_{n,\rho}a_n\,,
\end{equation}
which solve the recurrence equation
\begin{equation} \label{peak7}
\alpha_{n+1} = \theta_{n,\rho}\alpha_n^2\,, \qquad \text{with }\theta_{n,\rho} := \frac{2\zeta_{n+1,\rho}}{\zeta_{n,\rho}}\,.
\end{equation}
We also observe that, in view of \eqref{peak6} and \eqref{eq:stat}, we also have
\begin{equation} \label{peak8}
\frac{a_{n+1}}{a_n}= 2\alpha_n \,.
\end{equation}

Notice that, in view of assumptions \eqref{kernel2}-\eqref{kernel2bis} and \eqref{kernel5}--\eqref{kernel5bis} on the kernels, the asymptotic behaviour of the coefficients $\zeta_{n,\rho}$ is
\begin{equation} \label{peak3}
\begin{split}
\zeta_{n,\rho}&= \Bigl(\frac{k_0\ln2}{\gamma_02^\rho}\Bigr)2^{-n} + O(2^{(\bar{c}-1)n}) \qquad\text{as }n\to-\infty, \\
\zeta_{n,\rho}&\sim (2^{-\beta}\ln2) 2^{(\alpha-\beta)(n+\rho)} \qquad\qquad\text{ as }n\to\infty,
\end{split}
\end{equation}
where $\bar{c}:=\min\{\bar{\alpha},\bar{\beta}\}>1$.
In turn the coefficients $\theta_{n,\rho}$ obey
\begin{equation} \label{peak9}
\theta_{n,\rho}= 1 + O(2^{\bar{c}n}) \quad\text{as }n\to-\infty,
\qquad\qquad
\lim_{n\to\infty}\theta_{n,\rho}=2^{\alpha-\beta+1}\,.
\end{equation}

The existence of stationary solutions in the form \eqref{peak1} with any given value of the mass is guaranteed by the following proposition.

\begin{proposition}[Stationary peaks solutions] \label{prop:stationary}
Let $\rho\in[0,1)$ and $M>0$ be given. There exists a unique family of coefficients $\{a_n\}_{n\in\Z}$ solving \eqref{eq:stat} which are positive, bounded, and satisfy
\begin{equation} \label{peak4}
\sum_{n=-\infty}^\infty 2^{n+\rho} a_n = M.
\end{equation}
Furthermore, there exist two constants $A_0\in\R$, $A_M>0$, uniquely determined by $M$, such that
\begin{equation} \label{peak5}
\begin{split}
a_n & = a_{-\infty}\bigl(2^n + A_02^{2n}\bigr) + o(2^{2n}) \qquad\qquad\text{as }n\to-\infty, \\
a_n & \sim a_\infty 2^{(\beta-\alpha)n}e^{-A_M2^n} \qquad\qquad\qquad\qquad\text{as } n\to\infty,
\end{split}
\end{equation}
where $a_{-\infty}:=\frac{\gamma_0 2^{\rho+1}}{k_0\ln2}$, $a_\infty:=(\ln2)^{-1}2^\beta2^{(\beta-\alpha)(\rho+1)}$.

In particular, the measure $g_p$ defined by \eqref{peak1} is a stationary solution to \eqref{eq:coagfrag2} with total mass $M$, that is \eqref{peak2} holds.
\end{proposition}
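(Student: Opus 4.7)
The plan is to work in the reduced variables $\alpha_n:=\tfrac12\zeta_{n,\rho}a_n$ of \eqref{peak6}, in which the stationarity condition becomes $\alpha_{n+1}=\theta_{n,\rho}\alpha_n^2$; by \eqref{peak9}, $\theta_{n,\rho}\to 1$ as $n\to-\infty$, and the fixed point $\alpha\equiv 1$ of the limit map $\alpha\mapsto\alpha^2$ corresponds via \eqref{peak3} to the expected behaviour $a_n\sim a_{-\infty}2^n$. I parametrize positive solutions of the recurrence by $\alpha_0>0$: forward iteration is explicit, and backward iteration is defined by the positive square root $\alpha_{n-1}=\sqrt{\alpha_n/\theta_{n-1,\rho}}$, producing for each $\alpha_0>0$ a unique positive sequence $\{\alpha_n(\alpha_0)\}_{n\in\Z}$.

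For the behaviour at $-\infty$, iterating the backward map yields the explicit formula
\[
\log\alpha_{-k}=2^{-k}\log\alpha_0-\sum_{j=1}^{k}2^{j-k-1}\log\theta_{-j,\rho},
\]
and since $\log\theta_{-j,\rho}=O(2^{-\bar cj})$ with $\bar c>1$ by \eqref{peak9}, both terms vanish as $k\to\infty$; hence $\alpha_n\to 1$ for every $\alpha_0>0$. For the sharper asymptotic, setting $\beta_n:=\alpha_n-1$, Taylor expansion of the recurrence gives
\[
\beta_{n+1}=2\beta_n+(\theta_{n,\rho}-1)+O\bigl(\beta_n^2+(\theta_{n,\rho}-1)\beta_n\bigr),
\]
so the inhomogeneity is $O(2^{\bar cn})=o(2^n)$ and any decaying solution must take the form $\beta_n=A_0\,2^n+o(2^n)$ for some $A_0=A_0(\alpha_0)\in\R$. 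Combining this with \eqref{peak6} and the expansion \eqref{peak3} yields the first line of \eqref{peak5}.

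For the forward asymptotic I take logarithms $u_n:=\ln a_n$; the recurrence \eqref{eq:stat} becomes the affine relation $u_{n+1}=2u_n+\ln\zeta_{n,\rho}$. Matching the particular solution $p_n=(\beta-\alpha)(\ln 2)n+\ln a_\infty$ with $a_\infty$ as in the statement, the residual $v_n:=u_n-p_n$ satisfies $v_{n+1}=2v_n+\varepsilon_n$ with $\varepsilon_n\to 0$ thanks to \eqref{peak3}. A telescoping calculation shows that $2^{-n}v_n$ is Cauchy and converges to some $L=L(\alpha_0)\in\R$; setting $A_M:=-L$ and unwinding the substitutions gives $a_n\sim a_\infty 2^{(\beta-\alpha)n}e^{-A_M 2^n}$, which matches the second line of \eqref{peak5} precisely when $A_M>0$. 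Defining $\alpha_*:=\sup\{\alpha_0>0:A_M(\alpha_0)>0\}$, for $\alpha_0\in(0,\alpha_*)$ the series $\sum_n 2^{n+\rho}a_n$ converges (geometrically at $-\infty$, super-exponentially at $+\infty$).

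It remains to impose the mass constraint. The map $\alpha_0\mapsto M(\alpha_0):=\sum_n 2^{n+\rho}a_n(\alpha_0)$ is continuous on $(0,\alpha_*)$ and strictly increasing, since both the forward map $\alpha\mapsto\theta_{n,\rho}\alpha^2$ and the backward square-root map are strictly increasing in $\alpha$, so $\alpha_n(\alpha_0)$ is strictly increasing in $\alpha_0$ for every $n\in\Z$. The endpoints behave as $M(\alpha_0)\to 0$ when $\alpha_0\to 0^+$ (for small $\alpha_0$ the ``thermalization'' index past which $\alpha_n$ is close to $1$ is pushed to $n\sim-\log_2\log(1/\alpha_0)$, so the summable geometric tail is truncated and shrinks to zero) and $M(\alpha_0)\to\infty$ as $\alpha_0\to\alpha_*^-$ (by definition of $\alpha_*$). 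Surjectivity yields, for each $M>0$, a unique matching $\alpha_0$ and hence the unique sequence $\{a_n\}$; stationarity of $g_p$ then follows from the weak-form identity preceding \eqref{eq:stat}. I expect the main obstacle to be the forward asymptotic: namely, verifying that $A_M$ is strictly positive throughout the admissible range, which requires a quantitative handling of the error $\varepsilon_n$ and the exclusion of the ``critical'' trajectory on which $\alpha_n$ is trapped near the non-zero fixed point of the asymptotic map $\alpha\mapsto 2^{\alpha-\beta+1}\alpha^2$.
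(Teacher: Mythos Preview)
Your approach is essentially the paper's: pass to the variables $\alpha_n=\tfrac12\zeta_{n,\rho}a_n$, analyze the asymptotics of the log-linear recurrence at $\pm\infty$, and then impose the mass constraint by a monotonicity/intermediate-value argument. The difference is one of execution rather than strategy, and it bears directly on the difficulty you flag at the end.

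The paper does not treat the forward recurrence merely asymptotically. Since $\log\alpha_{n+1}=2\log\alpha_n+\log\theta_{n,\rho}$ is affine in $\log\alpha_n$, one can iterate \emph{exactly} and, after letting the starting index go to $-\infty$ using the $-\infty$ asymptotic, obtain the closed formula
\[
\alpha_n=\exp\Bigl(-A\,2^n\Bigr)\exp\Bigl(-2^n\sum_{j\ge n+1}2^{-j}\log\theta_{j-1,\rho}\Bigr),
\qquad
A=-\log\alpha_0-\sum_{j\ge 1}2^{-j}\log\theta_{j-1,\rho}.
\]
This gives an explicit affine bijection between $\log\alpha_0$ and $A$, so the paper simply \emph{parametrizes by $A>0$} instead of by $\alpha_0$. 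Monotonicity and the endpoint limits $M(A)\to 0$ as $A\to\infty$, $M(A)\to\infty$ as $A\to 0^+$ are then immediate from the closed form, and no discussion of a ``critical trajectory'' is needed.

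Your own telescoping for $v_n$ actually delivers the same thing: summing $2^{-(k+1)}\varepsilon_k$ exactly (not just noting convergence) gives $A_M=-\log\alpha_0+\text{(explicit constant)}$, which identifies $\alpha_*$ explicitly and shows $A_M(\alpha_0)\to 0^+$ as $\alpha_0\uparrow\alpha_*$. Once you see this, the obstacle you anticipate evaporates, and your endpoint argument $M(\alpha_0)\to\infty$ becomes the paper's $M(A)\to\infty$ as $A\to 0^+$. In short: your plan is correct; pushing the log-linear recurrence to an exact formula rather than an asymptotic one is the step that cleans everything up.
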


\begin{proof}
Given any initial value $\alpha_0>0$ we can consider the family of coefficients $\{\alpha_n\}_{n\in\Z}$ obtained by the recurrence relation \eqref{peak7} starting from $\alpha_0$. We first compute the asymptotics of $\alpha_n$ at $n\to-\infty$: by iterating \eqref{peak7} we have for all $k\in\N$
\begin{equation} \label{proofstat3}
\begin{split}
\alpha_{-k}
& = \Biggl(\prod_{\ell=1}^{k} \bigl(\theta_{\ell-k-1,\rho}\bigr)^{-2^{-\ell}}\Biggr)\alpha_0^{2^{-k}}
= \exp \Biggl( 2^{-k}\ln(\alpha_0) - \sum_{\ell=1}^k 2^{-\ell}\ln(\theta_{\ell-k-1,\rho}) \Biggr) \\
& = \exp \Biggl( 2^{-k}\ln(\alpha_0) - 2^{-k}\sum_{j=1-k}^0 2^{-j}\ln(\theta_{j-1,\rho}) \Biggr) \,.
\end{split}
\end{equation}
Observe now that, in view of the asymptotics \eqref{peak9}, the series
\begin{equation} \label{proofstat4}
\Theta := \sum_{j=-\infty}^\infty 2^{-j}\ln(\theta_{j-1,\rho})
\end{equation}
is absolutely convergent; then, by letting $k\to\infty$ in \eqref{proofstat3} we obtain the asymptotics
\begin{equation} \label{proofstat5}
\alpha_n = 1+ A_0 2^n + o(2^n) \quad\text{as }n\to-\infty,
\qquad\text{where } A_0 := \ln\alpha_0 - \sum_{j=-\infty}^{0} 2^{-j}\ln(\theta_{j-1,\rho}) \,.
\end{equation}

We now look at the behaviour of $\alpha_n$ as $n\to\infty$. For every $n\in\Z$ and $k\in\N$ we have
\begin{equation*}
\begin{split}
\alpha_n
& = \Biggl(\prod_{\ell=n-k}^{n-1}\theta_{\ell,\rho}^{2^{n-\ell-1}}\Biggr) \alpha_{n-k}^{2^k}
= \exp\Biggl( \sum_{\ell=n-k}^{n-1}2^{n-\ell-1}\ln(\theta_{\ell,\rho}) + 2^k\ln(\alpha_{n-k}) \Biggr) \\
& = \exp\Biggl( \sum_{\ell=n-k}^{n-1}2^{n-\ell-1}\ln(\theta_{\ell,\rho}) + 2^k(A_02^{n-k} + o(2^{n-k})) \Biggr) \,,
\end{split}
\end{equation*}
where we used the asymptotic \eqref{proofstat5} in the last passage; by letting $k\to\infty$ we obtain
\begin{equation*}
\begin{split}
\alpha_n
& = \exp\Biggl( \sum_{\ell=-\infty}^{n-1}2^{n-\ell-1}\ln(\theta_{\ell,\rho}) + 2^nA_0 \Biggr)
= \exp\Biggl(  2^n\sum_{j=-\infty}^{n}2^{-j}\ln(\theta_{j-1,\rho}) + 2^nA_0 \Biggr) \\
& = \exp\Biggl(  2^n \biggl(A_0+ \sum_{j=-\infty}^{\infty}2^{-j}\ln(\theta_{j-1,\rho}) \biggr)\Biggr)\exp\Biggl( -2^n\sum_{j=n+1}^{\infty}2^{-j}\ln(\theta_{j-1,\rho})\Biggr) \,,
\end{split}
\end{equation*}
which finally gives, recalling \eqref{proofstat4},
\begin{equation} \label{proofstat6}
\alpha_n = e^{-A2^n}\exp\Biggl( -2^n\sum_{j=n+1}^{\infty}2^{-j}\ln(\theta_{j-1,\rho})\Biggr)\,,
\qquad\text{where } A:= - (A_0 + \Theta) \,.
\end{equation}
Notice also that by \eqref{peak9}
\begin{equation} \label{proofstat7}
\alpha_n \sim \frac{e^{-A2^n}}{2^{\alpha-\beta+1}} \qquad\text{as }n\to\infty.
\end{equation}
Summing up, for every value of the free parameter $\alpha_0$ we have constructed a sequence of coefficients $\{\alpha_n\}_{n\in\Z}$ solving \eqref{peak7} and satisfying \eqref{proofstat5}, \eqref{proofstat6}. This family of solutions is equivalently parametrized by the exponent $A=-\ln(\alpha_0)-\sum_{j=1}^\infty 2^{-j}\ln(\theta_{j-1,\rho})$ giving the asymptotic behaviour of $\alpha_n$ at $n\to\infty$. We are interested only in those solutions with $A>0$.

We finally compute the total mass and we show that, for any given $M>0$, there is always a unique choice of the parameter $A>0$ such that the constraint \eqref{peak4} is fulfilled. In terms of $\alpha_n$, \eqref{peak4} reads
\begin{multline*}
\sum_{n=-\infty}^\infty 2^{n+\rho}a_n
= \sum_{n=-\infty}^\infty 2^{n+\rho+1}\zeta_{n,\rho}^{-1}\alpha_n \\
\xupref{proofstat6}{=} \sum_{n=-\infty}^\infty 2^{n+\rho+1}\zeta_{n,\rho}^{-1}  e^{-A2^n}\exp\Biggl( -2^n\sum_{j=n+1}^{\infty}2^{-j}\ln(\theta_{j-1,\rho})\Biggr) =: M(A) \,.
\end{multline*}
Recalling \eqref{peak3}, it is easily seen that this series is convergent for every value of $A>0$; moreover the value $M(A)$ is decreasing with respect to $A$, with $M(A)\to0$ as $A\to\infty$ and $M(A)\to\infty$ as $A\to0^+$. Therefore, given any $M>0$, we can find a unique value $A_M>0$ such that $M(A_M)=M$, so that the constraint \eqref{peak4} is satisfied.

Going back to the coefficient $a_n=2\zeta_{n,\rho}^{-1}\alpha_n$, it is easily seen that also \eqref{peak5} is satisfied, thanks to \eqref{proofstat5}, \eqref{proofstat7}, and \eqref{peak3}.
\end{proof}

\begin{remark} \label{rm:stat}
It is clear from the proof of Proposition~\ref{prop:stationary} that, for a given $\rho>0$, there is a one-to-one correspondence between the total mass $M\in(0,\infty)$ of the solution $g_p$ and the parameter $A_M\in(0,\infty)$ which determines its asymptotic behaviour at $\infty$; in other words, the result can be equivalently stated by saying that, for every $\rho>0$ and $A>0$, there exists a unique weak solution $g_p$ in the form \eqref{peak1} whose coefficients obey \eqref{peak5}. We will underline the dependence of the coefficients on the parameters $A$ and $\rho$ by writing $\{a_n(A,\rho)\}_{n\in\Z}$.
\end{remark}


\section{The linearized problem} \label{sect:linear}

The analysis in the paper and the proof of the main result rely heavily on the study of the properties of solutions to the linear problem
\begin{equation} \label{linear1}
\frac{\de y_n}{\de t} = \frac{\gamma(2^{n})}{4} \Bigl[ y_{n-1}-y_n - \sigma_n\bigl( y_n-y_{n+1}\bigr) \Bigr] ,
\end{equation}
where the coefficients $\sigma_n$ are given by
\begin{equation} \label{linear2}
\sigma_n := 8\alpha_n(A_M)\frac{\gamma(2^{n+1})}{\gamma(2^n)}
\end{equation}
(recall here that $\alpha_n$ are the rescaled coefficients of a stationary solution corresponding to the parameters $A_M$ and $\rho=0$, introduced in \eqref{peak6} and explicitly given by \eqref{proofstat6}). The equation \eqref{linear1} will appear in Section~\ref{sect:stability1} as the linearization of the evolution equation for the coefficients of a weak solution concentrated in Dirac masses. The value of $M>0$, and consequently of $A_M$, is fixed throughout this section. Notice that, in view of \eqref{kernel5}, \eqref{kernel5bis}, \eqref{proofstat5}, and \eqref{proofstat6}, we have the asymptotics
\begin{equation} \label{linear3}
\sigma_n\sim 8 -8(A_M+\Theta)2^n+o(2^n) \quad\text{as $n\to-\infty$,}
\qquad
\sigma_n\sim 2^{2\beta-\alpha+2}e^{-A_M 2^n} \quad\text{as $n\to\infty$,}
\end{equation}
where $\Theta$ is the constant appearing in \eqref{proofstat4}.

We fix some notation to be used in the following. We introduce the space of sequences which are bounded as $n\to\infty$ and grow at most as $2^{-n}$ as $n\to-\infty$:
\begin{equation} \label{linearspace}
\mathcal{Y} := \Bigl\{ y=\{y_n\}_{n\in\Z} \,:\, \|y\|_{\mathcal{Y}}<\infty \Bigr\},
\qquad
\|y\|_{\mathcal{Y}} := \sup_{n\leq0} \, 2^{n}|y_n| + \sup_{n>0} \, |y_n| \,.
\end{equation}
For $\theta\in\R$ we also consider the space
\begin{equation} \label{linearspace2}
\mathcal{Y}_\theta := \Bigl\{ y=\{y_n\}_{n\in\Z}\in\mathcal{Y} \,:\, \|y\|_{\theta}<\infty \Bigr\},
\qquad
\|y\|_{\theta} := \sup_{n\leq0} \, 2^{n}|y_n| + \sup_{n>0} \, 2^{\theta n}|y_n| 
\end{equation}
(notice in particular that $\mathcal{Y}_0=\mathcal{Y}$).
It is convenient to denote the linear operator on the right-hand side of \eqref{linear1}, acting on a sequence $y=\{y_n\}_{n\in\Z}$, by
\begin{equation} \label{linear4}
\mathscr{L}_n(y) := \frac{\gamma(2^{n})}{4} \Bigl[ y_{n-1}-y_n - \sigma_n\bigl( y_n-y_{n+1}\bigr) \Bigr], 
\qquad
\mathscr{L}(y) := \{ \mathscr{L}_n(y) \}_{n\in\Z} \,,
\end{equation}
and the associated semigroup by
\begin{equation} \label{linear10}
S_n(t)(y^0) := y_n(t),
\qquad
S(t)(y^0) := \{S_n(t)(y^0)\}_{n\in\Z} \,,
\end{equation}
where $\{y_n(t)\}_{n\in\Z}$ is the unique solution to \eqref{linear1} at time $t$, with initial datum $y^0$.
We finally introduce a symbol for the discrete derivatives
\begin{equation}  \label{linearD}
D^+_n(y) := y_{n+1}-y_n,
\qquad
D^-_n(y) := y_n-y_{n-1},
\qquad
D^{\pm}(y) := \{ D^{\pm}_n(y) \}_{n\in\Z} \,,
\end{equation}
and for the operator
\begin{equation} \label{linear7}
P_n(y) := 2^ny_n, \qquad P(y):=\{ P_n(y) \}_{n\in\Z} \,.
\end{equation}

We summarize the main regularity result for the linear equation \eqref{linear1} in the following statement, whose technical proof is postponed to Appendix~\ref{sect:appendix}.

\begin{theorem} \label{thm:linear}
Let $\theta$, $\tilde{\theta}$ be fixed parameters satisfying
\begin{equation} \label{theta}
\theta\in(-1,\beta), \qquad \tilde{\theta}\in[\theta,\beta], \qquad\text{with }\tilde{\theta}-\theta<\beta,
\end{equation}
and let $y^0\in\mathcal{Y}_\theta$ be a given initial datum.
Then there exists a unique solution $t\mapsto S(t)(y^0)\in\mathcal{Y}_\theta$ to the linear problem \eqref{linear1} with initial datum $y^0$, which satisfies the estimate
\begin{equation} \label{linear13b}
\| D^+(S(t)(y^0)) \|_{\tilde{\theta}} \leq C_1 \|y^0\|_\theta  (1+t^{-\frac{\tilde{\theta}-\theta}{\beta}}) e^{-\nu t} \qquad\text{for all $t>0$,}
\end{equation}
where $\nu>0$ is a constant depending only on $M$, and $C_1>0$ depends on $M$, $\theta$, and $\tilde{\theta}$.

Moreover, there exists the limit $S_{\infty}(t)(y^0):=\lim_{n\to\infty}S_n(t)(y^0)$, satisfying for all $t>0$
\begin{equation} \label{linear13}
\|S(t)(y^0)-S_{\infty}(t)(y^0)\|_{\tilde{\theta}} \leq C_1 \|y^0\|_\theta  (1+t^{-\frac{\tilde{\theta}-\theta}{\beta}}) e^{-\nu t} \qquad\text{(if $\tilde{\theta}>0$).}
\end{equation}

Finally, there exists the limit
\begin{equation*}
D^\beta_\infty S(t)(y^0) := \lim_{n\to\infty} 2^{\beta n}\bigl( S_n(t)(y^0) - S_\infty(t)(y^0) \bigr) \,,
\end{equation*}
and one has the identity
\begin{equation} \label{linear12}
\lim_{n\to\infty} \mathscr{L}_n(S(t)(y^0)) = \frac{2^\beta-1}{4}D^\beta_\infty S(t)(y^0).
\end{equation}
\end{theorem}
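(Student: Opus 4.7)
My plan is to construct $S(t)$ by truncation, then establish a weighted maximum/comparison principle together with a discrete parabolic smoothing estimate, and finally read off the asymptotics at $n=\infty$ by summing telescoping series.

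I would first truncate \eqref{linear1} to a finite window $\{-N,\ldots,N\}$ with artificial boundary conditions (say $y^N_{-N-1}(t)=0$ and $y^N_{N+1}(t)=y^N_N(t)$), obtaining a standard linear ODE with a unique solution $y^N$. To pass to the limit $N\to\infty$ one needs uniform bounds in $\|\cdot\|_\theta$. The key tool is a weighted comparison principle: the operator $\mathscr{L}$ in \eqref{linear4} has the structure ``forward minus $\sigma_n$ times backward discrete derivative'' scaled by $\gamma(2^n)/4>0$, while \eqref{linear3} shows $\sigma_n\to 8$ as $n\to-\infty$ and $\sigma_n\to 0$ super-exponentially as $n\to+\infty$. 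Choosing a weight $\omega_n=2^n$ for $n\le 0$ and $\omega_n=2^{\theta n}$ for $n\ge 0$, the sequence $\omega_n^{-1}$ turns out to be, up to lower order, a supersolution of a shifted equation, whose dissipativity is produced by the imbalance between the forward and backward coefficients visible in \eqref{linear3}. This delivers $\|y(t)\|_\theta\le C\|y^0\|_\theta e^{-\nu t}$ for some $\nu>0$ determined by $M$, hence existence/uniqueness in $\mathcal{Y}_\theta$ and the exponential part of \eqref{linear13b}.

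The gain from $\theta$ to $\tilde\theta$ (the factor $t^{-(\tilde\theta-\theta)/\beta}$) is the main technical step. I would apply $D^+$ to \eqref{linear1}: this produces a companion equation for $D^+(y)$ in which the coefficient $\gamma(2^n)\sim 2^{\beta n}$ enforces a characteristic time $\sim 2^{-\beta n}$ at level $n$, which is the discrete analogue of parabolic smoothing. Heuristically, by time $t$ the equation has effectively acted on scales up to $n(t)\sim\beta^{-1}\log_2(1/t)$, and upgrading the decay exponent from $\theta$ to $\tilde\theta$ at that scale costs exactly $2^{(\tilde\theta-\theta)n(t)}\sim t^{-(\tilde\theta-\theta)/\beta}$. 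The requirement $\tilde\theta-\theta<\beta$ keeps this parabolic gain less than one. Combining this short-time smoothing with the long-time exponential decay from the previous step yields \eqref{linear13b}. Given \eqref{linear13b}, the estimate \eqref{linear13} follows by summation: for $\tilde\theta>0$,
\begin{equation*}
|S_n(t)(y^0)-S_\infty(t)(y^0)|\le \sum_{k=n}^\infty|D^+_k(S(t)(y^0))|\le C\|y^0\|_\theta(1+t^{-(\tilde\theta-\theta)/\beta})e^{-\nu t}\sum_{k=n}^\infty 2^{-\tilde\theta k},
\end{equation*}
which is the claim. Applying \eqref{linear13b} once more with $\tilde\theta$ just below $\beta$, followed by an additional ODE argument for the renormalized difference $2^{\beta n}(S_n-S_\infty)$, produces the limit $D^\beta_\infty S(t)(y^0)$.

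Finally, the identity \eqref{linear12} is a direct computation from the definition \eqref{linear4}: by \eqref{linear3} the contribution $\gamma(2^n)\sigma_n(y_n-y_{n+1})$ vanishes because $\sigma_n$ decays super-exponentially, while $\gamma(2^n)/2^{\beta n}\to 1$ by \eqref{kernel5}, and the algebraic identity
\begin{equation*}
2^{\beta n}(y_{n-1}-y_n)=2^\beta\cdot 2^{\beta(n-1)}(y_{n-1}-y_\infty)-2^{\beta n}(y_n-y_\infty)\longrightarrow (2^\beta-1)\,D^\beta_\infty S(t)(y^0)
\end{equation*}
delivers \eqref{linear12} after dividing by $4$. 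The main obstacle I anticipate is the smoothing estimate underlying \eqref{linear13b}: since $\gamma(2^n)$ is strongly inhomogeneous and $\mathscr{L}$ does not commute with $D^+$, controlling $D^+(y)$ requires a careful commutator/energy argument run scale-by-scale, in the spirit of Hölder regularity for parabolic equations as signalled in the introduction, but localized at the single point at infinity $n=\infty$. Pinning down the sharp exponent $(\tilde\theta-\theta)/\beta$ and choosing a weight that behaves correctly both in the drift regime $n\to+\infty$ (where $\sigma_n$ is negligible) and in the ``order-$8$'' regime $n\to-\infty$ (where forward and backward coefficients nearly balance) is where the bulk of the work lies.
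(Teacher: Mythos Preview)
Your proposal contains a genuine gap and also diverges substantially from the paper's route.

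The claim that a weighted comparison principle yields $\|y(t)\|_\theta\le C\|y^0\|_\theta e^{-\nu t}$ is false: constant sequences are exact stationary solutions of \eqref{linear1} (the operator $\mathscr{L}_n$ annihilates them), so the full solution cannot decay to zero. What is true is that $y_n(t)$ converges to an explicit constant $\bar m$, a weighted average of $y^0$ with weights $2^{2n}a_n(A_M)$, and it is $y(t)-\bar m$ (equivalently $D^+(y(t))$) that decays exponentially. The paper does \emph{not} obtain this from a pure supersolution argument: it rewrites \eqref{linear1} in divergence form, runs a weighted $\ell^2$ energy estimate, and closes it via a discrete Poincar\'e inequality with weight $2^{2n}a_n$. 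This gives exponential decay of $y_n(t)-\bar m$ on any bounded window $|n|\le n_0$; a supersolution argument is then used only in the tail region $n<-n_0$, with the boundary value at $n=-n_0$ already controlled by the energy estimate.

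For the smoothing step (the gain from $\theta$ to $\tilde\theta$), your heuristic about characteristic times $\sim 2^{-\beta n}$ is correct, but the paper's mechanism is quite different from a commutator/energy argument on $D^+(y)$. In the region $n>n_0$ the paper drops the $\sigma_n$ term (legitimate since $\sigma_n$ decays like $e^{-A_M 2^n}$), and computes the fundamental solutions $\Psi^{(\ell)}_n$ of the resulting one-sided equation $\dot y_n=\tfrac{\gamma(2^n)}{4}(y_{n-1}-y_n)$ \emph{explicitly} by Laplace transform and residues, obtaining the pointwise bound $|\Psi^{(\ell)}_n-\Psi^{(\ell)}_{n+1}|\le c\,2^{-\beta(n-\ell)}e^{-\gamma(2^\ell)t/4}$. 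The full equation is then handled by Duhamel, with the $\sigma_n$ contribution as a source: the sum $\sum_\ell 2^{(\beta-\theta)\ell}e^{-\gamma(2^\ell)t/4}$ is precisely what produces the factor $t^{-(\tilde\theta-\theta)/\beta}$, and a Gr\"onwall argument absorbs the source term. The existence of the refined limit $D^\beta_\infty S(t)(y^0)$ is also read off from the explicit residue formulae for $\Psi^{(\ell)}$, rather than from an abstract ODE argument on $2^{\beta n}(y_n-y_\infty)$.

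Your derivation of \eqref{linear13} by telescoping from \eqref{linear13b}, and your computation for the identity \eqref{linear12}, are essentially what the paper does.
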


All the constants in the statement above depend also on the properties of the coagulation and fragmentation kernels; however we will not mention this dependence explicitly, as the kernels are fixed throughout the paper. For the purposes of this paper, we will not need to consider negative values of $\theta$ in the previous theorem (that is, initial data which are unbounded as $n\to\infty$); however, since we will need this result for $\theta<0$ in the companion paper \cite{BNVe}, we prefer to include also this possibility in the statement. 


\section{Stability in the class of peaks solutions} \label{sect:stability1}

\subsection{The main result} \label{subsect:stability1main}
In this section we discuss the stability of the stationary solutions with peaks constructed in Proposition~\ref{prop:stationary}, among the class of weak solutions concentrated in Dirac masses. We consider a weak solution $g\in C([0,T];\measgbeta)$ to \eqref{eq:coagfrag2}, according to Definition~\ref{def:weakg}, in the form
\begin{equation} \label{dirac1}
g(x,t) = \sum_{n=-\infty}^\infty b_n(t)\delta(x-n-\rho) \,,
\end{equation}
where $\rho\in[0,1)$ is fixed. We denote the total mass of the solution, which is preserved along the evolution, by
\begin{equation} \label{dirac1bis}
M:=\int_{\R}2^xg(x,t)\de x = \sum_{n=-\infty}^\infty 2^{n+\rho}b_n(t) \,.
\end{equation}
In order to derive the evolution equation for the coefficients $b_n(t)$, we choose a sequence of test functions $\vphi_n(x):=2^x\psi(x-n-\rho)$, where $\psi\in C^\infty_{\mathrm{c}}(-\frac12,\frac12)$, $\psi(0)=1$, is a cut-off function; then by inserting the expression \eqref{dirac1} into the weak formulation \eqref{weakg} of the equation we obtain
\begin{equation*}
\begin{split}
2^{n+\rho}\frac{\de b_n}{\de t}
& = \sum_{k=-\infty}^\infty \frac{\ln2}{2}K(2^{k+\rho},2^{k+\rho}) (b_k(t))^2 \bigl( \vphi_n(k+\rho+1)-2\vphi_n(k+\rho)\bigr) \\
& \qquad -  \sum_{k=-\infty}^\infty \frac{\gamma(2^{k+\rho+1})}{4}b_{k+1}(t)\bigl( \vphi_n(k+\rho+1)-2\vphi_n(k+\rho)\bigr) \\
& =  \biggl( \frac{\ln2}{2}K(2^{n-1+\rho},2^{n-1+\rho}) (b_{n-1}(t))^2 - \frac14\gamma(2^{n+\rho})b_{n}(t) \biggr)2^{n+\rho} \\
& \qquad - \biggl( \frac{\ln2}{2}K(2^{n+\rho},2^{n+\rho}) (b_{n}(t))^2 - \frac14\gamma(2^{n+1+\rho})b_{n+1}(t) \biggr) 2^{n+\rho+1} \\
& = \biggl( \ln2\frac{k(2^{n-1+\rho})}{2^{n+1+\rho}} (b_{n-1}(t))^2 - \frac14\gamma(2^{n+\rho})b_{n}(t) \biggr) 2^{n+\rho}\\
& \qquad - \biggl( \ln2\frac{k(2^{n+\rho})}{2^{n+2+\rho}} (b_{n}(t))^2 - \frac14\gamma(2^{n+1+\rho})b_{n+1}(t) \biggr) 2^{n+\rho+1} \,,
\end{split}
\end{equation*}
where in the first passage we used the fact that the function $K(2^y,2^z)$ is supported in the region $|y-z|<1$, see Remark~\ref{rm:supp}. By using the coefficients $\zeta_{n,\rho}$ introduced in \eqref{eq:stat}, we can write the previous equation as
\begin{equation} \label{dirac2}
\frac{\de b_n}{\de t} = \frac{\gamma(2^{n+\rho})}{4}\biggl( \zeta_{n-1,\rho}(b_{n-1}(t))^2 - b_n(t) \biggr) - \frac{\gamma(2^{n+1+\rho})}{2}\biggl( \zeta_{n,\rho}(b_{n}(t))^2 - b_{n+1}(t) \biggr) \,.
\end{equation}

Recall now that by Proposition~\ref{prop:stationary} we have a two-parameter family of stationary solutions in the form $g_p(x;A,\rho)=\sum_{n=-\infty}^\infty a_n(A,\rho)\delta(x-n-\rho)$, parametrized by $A>0$, $\rho\in[0,1)$ (see Remark~\ref{rm:stat}). The main goal is to express the coefficients $\{b_n(t)\}_{n\in\Z}$ of the weak solution \eqref{dirac1} as perturbations of the coefficients $\{a_n(A(t),\rho)\}_{n\in\Z}$ of one of these stationary solutions, at each time corresponding to a different value of the parameter $A$. As $t\to\infty$, the weak solution $g$ approaches the stationary solution $g_p(\cdot;A_M,\rho)$, where $A_M$ is the unique value of the parameter $A$ such that the corresponding solution has total mass $M$. Precisely, our main result is the following.

\begin{theorem} \label{thm:dirac}
Let $\{b_n^0\}_{n\in\Z}$ be an initial datum in the form
\begin{equation} \label{Dirac3}
b_n^0=a_n(A^0,\rho)\bigl(1+\e_n^0\bigr)
\end{equation}
for some $A^0>0$ and $\e_n^0\in\R$, and let $M=\sum_{n=-\infty}^\infty2^{n+\rho}b_n^0$ be the initial mass.
There exists $\delta_0>0$, depending on $M$, such that if 
\begin{equation} \label{Dirac4}
|A^0-A_M|\leq\delta_0,\qquad |\e_n^0|\leq\delta_0,
\end{equation}
then there exists a solution $\{b_n(t)\}_{n\in\Z}$ to \eqref{dirac2}, with $b_n(0)=b_n^0$, which can be expressed as
\begin{equation} \label{Dirac}
b_n(t) = a_n(A(t),\rho) \bigl( 1+\e_n(t) \bigr) \qquad\text{for every $t>0$ and $n\in\Z$,}
\end{equation}
for suitable functions $A(t)>0$ and $\e_n(t)\in\R$. Moreover for every $t>0$ one has the estimates
\begin{equation}\label{Dirac2}
\sup_{n\leq0}|\e_n(t)| + \sup_{n\geq0}2^{(\beta-1)n}|\e_n(t)| \leq (1+t^{-\frac{\beta-1}{\beta}})e^{-\nu t},
\qquad
\Big|\frac{\de A}{\de t}\Big| \leq (1+t^{-\frac{\beta-1}{\beta}})e^{-\nu t},
\end{equation}
where $\nu>0$ depends only on $M$. Finally, $A(t)\to A_M$ as $t\to\infty$.
\end{theorem}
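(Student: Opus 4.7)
I would substitute the ansatz $b_n(t)=a_n(A(t),\rho)(1+\varepsilon_n(t))$ into the ODE \eqref{dirac2} and exploit the stationary identities $\zeta_{n-1,\rho}a_{n-1}^2=a_n$, $\zeta_{n,\rho}a_n^2=a_{n+1}$, the ratio $a_{n+1}/a_n=2\alpha_n$, and the exact formula $\partial_Aa_n=-2^na_n$ (which follows from \eqref{proofstat6} together with $a_n=2\zeta_{n,\rho}^{-1}\alpha_n$). After dividing by $a_n$ one obtains
\begin{equation*}
\dot\varepsilon_n-2^n\dot A\,(1+\varepsilon_n)=\tfrac{\gamma(2^{n+\rho})}{4}\bigl[2\varepsilon_{n-1}-\varepsilon_n+\varepsilon_{n-1}^2\bigr]-\gamma(2^{n+\rho+1})\alpha_n\bigl[2\varepsilon_n-\varepsilon_{n+1}+\varepsilon_n^2\bigr].
\end{equation*}
The decisive observation is that the rescaling $y_n(t):=2^{-n}\varepsilon_n(t)$ turns the linear part of the right-hand side into $2^n\mathscr{L}_n(y)$ of \eqref{linear4}: using $2\varepsilon_{n-1}-\varepsilon_n=2^n(y_{n-1}-y_n)$ and $2\varepsilon_n-\varepsilon_{n+1}=2^{n+1}(y_n-y_{n+1})$, and recalling the definition \eqref{linear2} of $\sigma_n$, every coefficient matches. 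Dividing by $2^n$, the evolution of $y$ reads
\begin{equation*}
\dot y_n=\mathscr{L}_n(y)+\dot A(t)+\dot A(t)\,P_n(y)+N_n(y),
\end{equation*}
where $P$ is as in \eqref{linear7} and $N_n(y)=\tfrac{\gamma(2^n)2^n}{16}[y_{n-1}^2-2\sigma_ny_n^2]$ is a quadratic remainder (written for $\rho=0$; the general case is analogous).

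\textbf{Closing the equation for $A$.} Since $y^0_n=2^{-n}\varepsilon^0_n\to 0$ as $n\to\infty$, it is natural to require the property $y_\infty(t):=\lim_{n\to\infty}y_n(t)=0$ to be preserved in time. When $y_n=O(2^{-\beta n})$ at $+\infty$, both $P_n(y)=2^ny_n$ and $N_n(y)$ vanish as $n\to\infty$; passing to the limit in the equation and using \eqref{linear12} gives $\dot y_\infty=\tfrac{2^\beta-1}{4}D^\beta_\infty y+\dot A$, so this constraint forces the explicit formula
\begin{equation*}
\dot A(t)=-\tfrac{2^\beta-1}{4}\,D^\beta_\infty y(t),
\end{equation*}
expressing $\dot A$ as a functional of $y$ alone and closing the system.

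\textbf{Fixed-point argument and identification of $A_M$.} I would apply Theorem~\ref{thm:linear} with a choice $\theta\in(0,1]$ and $\tilde\theta=\beta$, admissible because $\beta>1$ and $\tilde\theta-\theta=\beta-1<\beta$; the initial datum satisfies $\|y^0\|_\theta\lesssim\delta_0$. I would then solve the system by Banach fixed point in a ball of curves $y\in C((0,\infty);\mathcal{Y}_{\tilde\theta})$ with $y_\infty(t)\equiv 0$, under the weighted-in-time norm $\sup_{t>0}e^{\nu t}(1+t^{-(\beta-1)/\beta})^{-1}\|y(t)\|_{\tilde\theta}$. Given $y$ in this ball, define $\dot A$ by the closing formula and set
\begin{equation*}
\Psi(y)(t):=S(t)(y^0)+\int_0^t S(t-s)\Bigl[\dot A(s)\mathbf 1+\dot A(s)P(y(s))+N(y(s))\Bigr]\,\de s,
\end{equation*}
the choice of $\dot A$ ensuring $\Psi(y)_\infty(t)=0$ at each $t$. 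Contractivity follows from the estimates \eqref{linear13b}--\eqref{linear13}, the smallness of $\delta_0$, and a standard convolution estimate for the time singularity. The fixed point yields $\varepsilon_n(t)=2^ny_n(t)$ and $A(t)=A^0+\int_0^t\dot A\,\de s$ satisfying both bounds of \eqref{Dirac2}. The integrability of $\dot A$ on $[0,\infty)$ gives the limit $A(\infty):=\lim_{t\to\infty}A(t)$, and passing to the limit in the mass identity $\sum_n2^{n+\rho}a_n(A(t),\rho)(1+\varepsilon_n(t))=M$ together with $\varepsilon_n(t)\to 0$ identifies $A(\infty)=A_M$ via Remark~\ref{rm:stat}.

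\textbf{Main obstacle.} The delicate point is the cross term $\dot A(t)\,P(y(t))$: because $P_n(y)=2^ny_n$ is only bounded (not small) as $n\to-\infty$ in view of the growth $y_n=O(2^{-n})$, this contribution must be estimated in a norm compatible with Theorem~\ref{thm:linear}. Combined with the time singularity $(t-s)^{-(\beta-1)/\beta}$ of the linear estimate at $s=t$, this forces a careful convolution argument checking that $\int_0^t(1+(t-s)^{-(\beta-1)/\beta})e^{-\nu(t-s)}(1+s^{-(\beta-1)/\beta})\,\de s$ stays comparable to $(1+t^{-(\beta-1)/\beta})$, both as a scalar and in the vector-valued sense of $\mathcal{Y}_{\tilde\theta}$. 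The parameter range in Theorem~\ref{thm:linear} is used here in an essential way.
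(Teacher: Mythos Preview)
Your strategy is the paper's: the change of variables $y_n=2^{-n}\varepsilon_n$, the linearization around $\mathscr{L}$, the determination of $\dot A$ by the condition $y_\infty(t)=0$, a contraction argument based on Theorem~\ref{thm:linear}, and mass conservation to identify $A(\infty)=A_M$. Two corrections are needed. First, the remainder you call $N_n$ is incomplete: the coefficient in \eqref{dirac5} contains $\alpha_n(A(t))$, not $\alpha_n(A_M)$, so when you extract $\mathscr{L}_n(y)$ (built from $\sigma_n$, hence from $\alpha_n(A_M)$) a term $2\gamma(2^{n+1})\bigl(\alpha_n(A_M)-\alpha_n(A(t))\bigr)(y_n-y_{n+1})$ is left over, together with an analogous correction to the quadratic part. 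These are exactly the second line of the paper's $h_n$ in \eqref{fp2} and account for the contribution $|A_M-A|\,\|y\|_\beta$ in Lemma~\ref{lem:fixedpointh}; your claim that ``every coefficient matches'' is therefore false.

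More substantially, the closing formula $\dot A=-\tfrac{2^\beta-1}{4}D^\beta_\infty y$ is correct for the \emph{solution} but does not work inside the iteration. With this choice (and $y$ the input) one computes $\partial_t\Psi(y)_\infty=\tfrac{2^\beta-1}{4}\bigl(D^\beta_\infty\Psi(y)-D^\beta_\infty y\bigr)$, which vanishes only at the fixed point; equivalently, the first Duhamel term $S_n(t)(y^0)$ alone has the nontrivial limit $S_\infty(t)(y^0)$ as $n\to\infty$, so $\Psi(y)\notin\mathcal{Y}_\beta$ and your map does not send the ball into itself. The paper repairs this by treating $(y,\Lambda)$ as a \emph{pair} of unknowns: $\tilde y_n$ is defined in \eqref{pfp3} by subtracting $S_\infty$ from every Duhamel component (so $\tilde y_\infty\equiv 0$ by construction), while $\tilde\Lambda$ is defined in \eqref{pfp4} via the $D^\beta_\infty$ of those same components; one then verifies a posteriori (Step~3 of the proof of Proposition~\ref{prop:fixedpoint}) that the fixed point indeed solves \eqref{fp1}. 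Your scheme can be fixed along these lines, but as written the fixed-point map is not well defined on $\mathcal{Y}_\beta$.
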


In the statement of the theorem, $\{a_n(A(t),\rho)\}_{n\in Z}$ denote the coefficients of the stationary solution constructed in Proposition~\ref{prop:stationary} corresponding to the parameters $A(t)$ and $\rho$, and $A_M$ is the unique value of $A$ such that $\sum_{n}2^{n+\rho}a_n(A_M,\rho)=M$.

\medskip

The rest of this section is devoted to the proof of Theorem~\ref{thm:dirac}. We will first write the equation in a more convenient set of variables and outline the strategy of the proof, and then perform the proof in full details by means of a fixed point argument.

In order to simplify the notation, we will assume henceforth that $\rho=0$, without loss of generality, and we will not indicate the dependence on $\rho$.
Recall that the goal is to express the coefficients $\{b_n(t)\}_{n\in\Z}$, solving \eqref{dirac2}, in the form
\begin{equation} \label{dirac3}
b_n(t) = a_n(A(t)) \bigl( 1+\e_n(t) \bigr)
\end{equation}
for suitable functions $A(t)$ and $\e_n(t)$, whose existence will be proved by means of a fixed point argument. We substitute \eqref{dirac3} into \eqref{dirac2}: by using the fact that the sequence $(a_n)_{n\in\N}$ satisfies the equation \eqref{eq:stat} we easily get
\begin{multline} \label{dirac4}
\frac{\de a_n}{\de A}\frac{\de A}{\de t} \bigl(1+\e_n\bigr) + a_n(A(t))\frac{\de\e_n}{\de t} \\
= \frac{\gamma(2^{n})}{4} \biggl[ a_n(A(t)) \Bigl( 2\e_{n-1} -\e_n + \e_{n-1}^2 \Bigr) - 2a_{n+1}(A(t))\frac{\gamma(2^{n+1})}{\gamma(2^{n})} \Bigl( 2\e_n-\e_{n+1}+\e_n^2 \Bigr) \biggr] \,.
\end{multline} 
Notice now that, in view of \eqref{peak6} and \eqref{proofstat6}, we have
\begin{equation*}
\frac{\de a_n}{\de A}=\frac{2}{\zeta_n}\frac{\de \alpha_n}{\de A} =-\frac{2}{\zeta_n} 2^n \alpha_n = -2^na_n\,,
\end{equation*}
and furthermore $\frac{a_{n+1}}{a_n}=2\alpha_n$ by \eqref{peak8}. Inserting these identities into \eqref{dirac4} we find
\begin{multline} \label{dirac5}
\frac{\de\e_n}{\de t}
= 2^n(1+\e_n)\frac{\de A}{\de t} \\
+ \frac{\gamma(2^{n})}{4} \biggl[ \Bigl( 2\e_{n-1} -\e_n + \e_{n-1}^2 \Bigr) - 4\alpha_n(A(t))\frac{\gamma(2^{n+1})}{\gamma(2^{n})}\Bigl( 2\e_n-\e_{n+1}+\e_n^2 \Bigr) \biggr] \,.
\end{multline}
It is now convenient to introduce the variable $y_n:=2^{-n}\e_n$, solving
\begin{multline} \label{dirac8}
\frac{\de y_n}{\de t}
= (1+2^ny_n)\frac{\de A}{\de t} \\
+ \frac{\gamma(2^{n})}{4} \biggl[ \Bigl( y_{n-1} -y_n + 2^{n-2}y_{n-1}^2 \Bigr) - 4\alpha_n(A(t))\frac{\gamma(2^{n+1})}{\gamma(2^{n})}\Bigl( 2y_n-2y_{n+1}+2^ny_n^2 \Bigr) \biggr] \,.
\end{multline}
Notice also that the mass-conservation property \eqref{dirac1bis} gives the additional constraint
\begin{equation} \label{dirac6}
M = \sum_{n=-\infty}^\infty 2^na_n(A(t)) \bigl( 1+ 2^n y_n(t)\bigr) \qquad\text{for every $t>0$.}
\end{equation}


\subsection{Strategy of the proof} \label{subsect:stability1strategy}
The proof of Theorem~\ref{thm:dirac} is based on the analysis of the linearized version of the equation \eqref{dirac8}, which is considered in Section~\ref{sect:linear}. We refer to the beginning of that section for the notation that will be used in the following. The equation \eqref{dirac8} for the coefficients $y(t)=\{y_n(t)\}_{n\in\Z}$ and the unknown function $A(t)$ can be written in terms of the linearized operator $\mathscr{L}$ (see \eqref{linear4}) as
\begin{equation} \label{fp1}
\frac{\de y_n}{\de t}(t)
= \mathscr{L}_n(y(t)) + \frac{\de A}{\de t}(t) + 2^ny_n(t)\frac{\de A}{\de t}(t) + h_n(y(t),A(t)) ,
\end{equation}
where for notational convenience we introduced the sequence $h(y,A):=\{ h_n(y,A) \}_{n\in\Z}$,
\begin{multline} \label{fp2}
h_n(y,A) := \frac{2^n\gamma(2^{n})}{4} \biggl[ \frac14 y_{n-1}^2 - 4\alpha_n(A)\frac{\gamma(2^{n+1})}{\gamma(2^{n})} y_n^2 \biggr] \\
+ 2\gamma(2^{n+1}) \bigl(\alpha_n(A_M)-\alpha_n(A)\bigr)\bigl( y_n-y_{n+1} \bigr) .
\end{multline}

Before going to the technical details of the proof, we briefly explain the general strategy: it consists in selecting the function $A(t)$ in such a way that the solution to \eqref{fp1} satisfies $\lim_{n\to\infty}y_n(t)=0$ for every positive time. The existence of a pair of functions $(y(t),A(t))$ with this property will be proved by means of a fixed point argument.
In order to formulate the problem as a fixed point, we notice that, for a given $A(t)$, we can write a representation formula for the solution $y(t)=\{y_n(t)\}_{n\in\N}$ to \eqref{fp1} with a given initial datum $y^0$:
\begin{multline} \label{fp3}
y_n(t) = S_n(t)(y^0) + A(t) - A(0) + \int_0^t \frac{\de A}{\de t}(s)S_n(t-s)(P(y(s)))\de s \\
+ \int_0^t S_n(t-s)(h(y(s),A(s)))\de s ,
\end{multline}
where $S_n$ is the semigroup generated by the linearized equation \eqref{linear1}, see \eqref{linear10}, and $P$ is the operator introduced in \eqref{linear7}. By formally taking the derivative with respect to $t$ in \eqref{fp3} we get
\begin{multline} \label{fp4}
\frac{\de y_n}{\de t}(t)
= \mathscr{L}_n\bigl(S(t)(y^0)\bigr) + \frac{\de A}{\de t} + 2^ny_n(t)\frac{\de A}{\de t} + h_n(y(t),A(t)) \\
+ \int_0^t \frac{\de A}{\de t}(s) \mathscr{L}_n\bigl( S(t-s)( P(y(s)) ) \bigr) \de s 
+ \int_0^t \mathscr{L}_n\bigl( S(t-s)(h(y(s),A(s))) \bigr) \de s \,.
\end{multline}
We obtain an equation defining $A(t)$ by imposing that the formal limit as $n\to\infty$ of \eqref{fp4} vanishes: recalling \eqref{linear12}, we have
\begin{multline} \label{fp5}
\frac{\de A}{\de t}
= \frac{1-2^\beta}{4}\biggl[ D^\beta_\infty S(t)(y^0)
+ \int_0^t \frac{\de A}{\de t}(s) D^\beta_\infty S(t-s)( P(y(s)) ) \de s \\
+ \int_0^t D^\beta_\infty S(t-s)(h(y(s),A(s))) \de s \biggr]
\end{multline}
(where we assumed that the limits of $2^ny_n(t)$ and of $h_n(y(t),A(t))$ vanish: this will be a consequence of the choice of the space in which we formulate the fixed point). We will show that all the quantities appearing in \eqref{fp5} are well-defined, and moreover that \eqref{fp5} implies that the right-hand side of \eqref{fp3} vanishes in the limit as $n\to\infty$, that is,
\begin{multline} \label{fp6}
S_\infty(t)(y^0) + A(t) - A(0) + \int_0^t \frac{\de A}{\de t}(s)S_\infty(t-s)( P(y(s)) )\de s \\
+ \int_0^t S_\infty(t-s)(h(y(s),A(s)))\de s = 0 \,.
\end{multline}
Finally, subtracting \eqref{fp6} from \eqref{fp3}, we obtain the equation
\begin{multline}  \label{fp7}
y_n(t)
= \bigl[S_n(t)-S_\infty(t)\bigr](y^0) + \int_0^t \frac{\de A}{\de t}(s) \bigl[S_n(t-s)-S_\infty(t-s)\bigr]( P(y(s)) )\de s \\
+ \int_0^t \bigl[S_n(t-s)-S_\infty(t-s)\bigr](h(y(s),A(s)))\de s \,.
\end{multline}
The previous formal considerations suggests to look for a pair $(y(t), \Lambda(t))$, with $\Lambda(t)=\frac{\de A}{\de t}$, solving the two equations \eqref{fp5}--\eqref{fp7} by a fixed point argument. This is made rigorous in the following subsection.


\subsection{Fixed point argument} \label{sect:fixedpoint}
The core of the proof of Theorem~\ref{thm:dirac} is contained in the following proposition.

\begin{proposition} \label{prop:fixedpoint}
Let $M>0$. There exists $\delta_0>0$, depending on $M$, with the following property. If $A^0>0$, $y^0\in\mathcal{Y}_1$ are such that
\begin{equation} \label{fp8}
M=\sum_{n=-\infty}^\infty 2^na_n(A^0)(1+2^ny_n^0)
\end{equation}
and
\begin{equation} \label{fp9}
|A^0-A_M|\leq\delta_0,\qquad \|y^0\|_1\leq\delta_0,
\end{equation}
then there exist functions $t\mapsto (y(t),A(t))$ solving \eqref{fp1} with $y(0)=y^0$, $A(0)=A^0$. Moreover for every $t>0$ one has the estimates
\begin{equation}\label{fp10}
\|y(t)\|_\beta \leq (1+t^{-\frac{\beta-1}{\beta}})e^{-\frac{\nu}{2} t},
\qquad
\Big|\frac{\de A}{\de t}\Big| \leq (1+t^{-\frac{\beta-1}{\beta}})e^{-\frac{\nu}{2}t},
\end{equation}
where $\nu$ is as in Theorem~\ref{thm:linear} (depending only on $M$). Finally, $A(t)\to A_M$ as $t\to\infty$.
\end{proposition}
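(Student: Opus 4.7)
The plan is to realize the pair $(y(t),\Lambda(t))$, with $\Lambda=\frac{\de A}{\de t}$, as a Banach fixed point of the coupled system formed by \eqref{fp5} and \eqref{fp7}, and then recover $A(t):=A^0+\int_0^t\Lambda(s)\de s$. Set $\mu:=\frac{\beta-1}{\beta}\in(0,\frac12)$ (allowed by $\beta\in(1,2)$) and introduce the Banach space $X$ of pairs $(y,\Lambda):(0,\infty)\to\mathcal{Y}_\beta\times\R$ with norm
\begin{equation*}
\|(y,\Lambda)\|_X:=\sup_{t>0}(1+t^{-\mu})^{-1}e^{\frac{\nu}{2}t}\|y(t)\|_\beta + \sup_{t>0}(1+t^{-\mu})^{-1}e^{\frac{\nu}{2}t}|\Lambda(t)|\,.
\end{equation*}
For $(y,\Lambda)$ in the closed ball $\mathcal{B}_r\subset X$, define $A(t)$ as above and set $\mathcal{T}(y,\Lambda):=(\tilde y,\tilde\Lambda)$, where $\tilde y_n(t)$ and $\tilde\Lambda(t)$ are the right-hand sides of \eqref{fp7} and \eqref{fp5} respectively.

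The core step is to verify that $\mathcal{T}$ maps $\mathcal{B}_r$ into itself and is a contraction for $r,\delta_0$ small enough. Three ingredients are needed. \textbf{(a)} By Theorem~\ref{thm:linear} applied with $\theta=1$ and $\tilde\theta=\beta$ (noting $y^0\in\mathcal{Y}_1$ with $\|y^0\|_1\leq\delta_0$), the linear term of $\tilde y$ is bounded by $C\delta_0(1+t^{-\mu})e^{-\nu t}$; the limit identity \eqref{linear12} provides the same bound for the linear term of $\tilde\Lambda$, since $|D_\infty^\beta S(t)(y^0)|\leq\|S(t)(y^0)-S_\infty(t)(y^0)\|_\beta$. \textbf{(b)} Since $P(y)\in\mathcal{Y}_{\beta-1}$ with $\|P(y)\|_{\beta-1}\leq\|y\|_\beta$, Theorem~\ref{thm:linear} with $\theta=\beta-1$, $\tilde\theta=\beta$ (so that $\tilde\theta-\theta=1<\beta$) handles the middle integrals. \textbf{(c)} A direct computation from \eqref{fp2}, using the asymptotics \eqref{linear3} of $\alpha_n$ and the expansion $\alpha_n(A_M)-\alpha_n(A)\sim(A_M-A)2^n$ as $n\to-\infty$ coming from \eqref{proofstat5}, yields the key estimate
\begin{equation*}
\|h(y,A)\|_{\beta-1}\lesssim\|y\|_\beta^2+|A-A_M|\,\|y\|_\beta\,;
\end{equation*}
one checks that the correct weight is $\beta-1$, not $1$, because the term $\frac{2^n\gamma(2^n)}{16}y_{n-1}^2$ in $h_n$ behaves like $2^{(1-\beta)n}\|y\|_\beta^2$ as $n\to\infty$.

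Combining (a)--(c) leads to a convolution estimate of the form
\begin{equation*}
\int_0^t\!(1+(t-s)^{-1/\beta})e^{-\nu(t-s)}\Bigl[\|y(s)\|_\beta^2+\bigl(|A(s)-A_M|+|\Lambda(s)|\bigr)\|y(s)\|_\beta\Bigr]\de s \lesssim (r^2+r\delta_0)(1+t^{-\mu})e^{-\nu t/2}\,,
\end{equation*}
the crucial computation being
\begin{equation*}
\int_0^t(t-s)^{-1/\beta}s^{-2\mu}\de s=B\bigl(1-\tfrac{1}{\beta},\,1-2\mu\bigr)\,t^{1-1/\beta-2\mu}=B\bigl(1-\tfrac{1}{\beta},\,1-2\mu\bigr)\,t^{-\mu}\,,
\end{equation*}
which reproduces exactly the $t^{-\mu}$ singularity of the ansatz and is finite precisely because $\beta<2$ (so $2\mu<1$). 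Together with the linear bound this gives $\|\mathcal{T}(y,\Lambda)\|_X\leq C\delta_0+C(r^2+r\delta_0)\leq r$ for $r,\delta_0$ sufficiently small; a parallel computation provides strict contraction, and Banach's theorem delivers a unique pair $(y,\Lambda)\in\mathcal{B}_r$.

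Finally, I would check that this fixed point actually solves \eqref{fp1} with the mass constraint \eqref{fp8}. Differentiating the integral identity \eqref{fp7} in $t$ reproduces \eqref{fp4}, and combining with \eqref{fp5} returns \eqref{fp1}. The mass constraint \eqref{fp8} is equivalent to the limit identity \eqref{fp6}; equation \eqref{fp5} was precisely designed by formally imposing $\lim_{n\to\infty}y_n(t)=0$ in \eqref{fp3}, and the convergence estimates of Theorem~\ref{thm:linear} ensure that this limit actually vanishes, so that \eqref{fp6}, and hence the mass constraint, is preserved in time once valid at $t=0$. Since $\Lambda\in L^1(0,\infty)$, $A(t)$ converges to some $A^\infty$; since $\|y(t)\|_\beta\to 0$ exponentially, passing to the limit in the mass constraint yields $M=\sum_n 2^n a_n(A^\infty)$, from which the uniqueness in Proposition~\ref{prop:stationary} forces $A^\infty=A_M$. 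The principal obstacle is the closing of the bilinear convolution estimate above, where the precise balance between the smoothing singularity $(t-s)^{-1/\beta}$ from the linear theory and the initial-time singularity $s^{-\mu}$ in $\|y(s)\|_\beta$ is controlled sharply by the assumption $\beta<2$.
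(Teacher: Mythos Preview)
Your proposal is correct and follows essentially the same route as the paper: the same Banach space with weight $(1+t^{-(\beta-1)/\beta})e^{-\nu t/2}$, the same map $\mathcal{T}$ built from \eqref{fp5}--\eqref{fp7}, the same applications of Theorem~\ref{thm:linear} with $(\theta,\tilde\theta)=(1,\beta)$ and $(\beta-1,\beta)$, and the same nonlinear estimate $\|h(y,A)\|_{\beta-1}\lesssim\|y\|_\beta^2+|A-A_M|\,\|y\|_\beta$ (which the paper isolates as Lemma~\ref{lem:fixedpointh}). The only minor deviation is in the closing step: the paper recovers \eqref{fp1} not by differentiating \eqref{fp7} but by \emph{integrating} \eqref{fp5} in time to obtain \eqref{fp6} and then adding \eqref{fp6} to \eqref{fp7} to reach \eqref{fp3}; also, the mass constraint \eqref{fp8} is not equivalent to \eqref{fp6} as you write---rather, mass conservation holds automatically for any solution of \eqref{fp1} (since it is just \eqref{dirac2} in disguise), and that conservation is what forces $A^\infty=A_M$.
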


\begin{proof}
Let $g:(0,\infty)\to\R$ be the function
\begin{equation} \label{pfp0}
g(t) := (1+t^{-\frac{\beta-1}{\beta}})e^{-\frac{\nu}{2}t}.
\end{equation}
Let also $\delta>0$ be a small parameter, to be chosen later.
We work in the space $\mathcal{X}:=\mathcal{X}_1\times\mathcal{X}_2$, where
\begin{equation} \label{pfp1}
\mathcal{X}_1 := \Bigl\{ y\in L^\infty_{\loc}((0,\infty);\mathcal{Y}_\beta) \,:\, \|y\|_{\mathcal{X}_1}\leq \delta \Bigr\},
\qquad
\|y\|_{\mathcal{X}_1} := \sup_{t>0} \, \frac{\|y(t)\|_\beta}{g(t)} \,,
\end{equation}
\begin{equation} \label{pfp2}
\mathcal{X}_2 := \Bigl\{ \Lambda\in L^\infty_{\loc}((0,\infty);\R) \,:\, \|\Lambda\|_{\mathcal{X}_2}\leq \delta \Bigr\},
\qquad
\|\Lambda\|_{\mathcal{X}_2} := \sup_{t>0} \, \frac{|\Lambda(t)|}{g(t)} \,.
\end{equation}
For $\Lambda\in\mathcal{X}_2$ we let
\begin{equation} \label{pfp5}
A_\Lambda(t) := A^0 + \int_0^t \Lambda(s)\de s \,.
\end{equation}
Notice that for every $\Lambda\in\mathcal{X}_2$
\begin{equation} \label{pfp5bis}
|A_\Lambda(t)-A^0| = \bigg|\int_0^t\Lambda(s)\de s\bigg| \leq \|\Lambda\|_{\mathcal{X}_2}\int_0^t g(s)\de s \leq C_3 \|\Lambda\|_{\mathcal{X}_2} \,,
\end{equation}
where $C_3:=\int_0^\infty g(t)\de s<\infty$ depends ultimately only on $M$. In particular, combining \eqref{fp9} and \eqref{pfp5bis} we can assume without loss of generality that for every $\Lambda\in\mathcal{X}_2$
\begin{equation}  \label{pfp5ter}
|A_{\Lambda}(t)-A_M| \leq C_3\delta + \delta_0 \leq \frac{A_M}{2}
\end{equation}
provided that we choose $\delta_0$ and $\delta$ sufficiently small (depending on $M$).

We define a map $\mathcal{T}:\mathcal{X}\to\mathcal{X}$ by setting $\mathcal{T}(y,\Lambda):=(\tilde{y},\tilde{\Lambda})$, where
\begin{multline} \label{pfp3}
\tilde{y}_n(t) := \bigl[S_n(t)-S_\infty(t)\bigr](y^0) + \int_0^t \Lambda(s) \bigl[S_n(t-s)-S_\infty(t-s)\bigr]( P(y(s)) )\de s \\
+ \int_0^t \bigl[S_n(t-s)-S_\infty(t-s)\bigr](h(y(s),A_\Lambda(s)))\de s \,,
\end{multline}
\begin{multline} \label{pfp4}
\tilde{\Lambda}(t) :=
\frac{1-2^\beta}{4}\biggl[ D^\beta_\infty S(t)(y^0)
+ \int_0^t \Lambda(s) D^\beta_\infty S(t-s)( P(y(s)) ) \de s \\
+ \int_0^t D^\beta_\infty S(t-s)(h(y(s),A_\Lambda(s))) \de s \biggr] \,.
 \end{multline}

We first have to show that all the quantities appearing on the right-hand side of \eqref{pfp3} and \eqref{pfp4} are well-defined.
Since $y^0\in\mathcal{Y}_1$, by Theorem~\ref{thm:linear} the limits $S_\infty(t)(y^0)$, $D_\infty^\beta S(t)(y^0)$ exist, with the bounds
\begin{equation} \label{pfp10}
\begin{split}
\|S_n(t)(y^0)-S_\infty(t)(y^0)\|_\beta & \leq C_1\|y^0\|_1\bigl(1+t^{-\frac{\beta-1}{\beta}}\bigr)e^{-\nu t} \leq C_1\delta_0 g(t), \\
|D_\infty^\beta S(t)(y^0)| & \leq C_1\delta_0 g(t).
\end{split}
\end{equation}
Similarly, since $y(s)\in\mathcal{Y}_\beta$ we have $P(y(s))\in\mathcal{Y}_{\beta-1}$ for every positive $s$, with
\begin{equation} \label{pfp13}
\|P(y(s))\|_{\beta-1} \leq \|y(s)\|_\beta,
\end{equation}
and therefore again by Theorem~\ref{thm:linear}
\begin{equation} \label{pfp11}
\begin{split}
\| \bigl(S_n(t-s)-S_\infty(t-s)\bigr)(P(y(s)))\|_\beta &\leq C_1\|y(s)\|_\beta\bigl(1+(t-s)^{-\frac{1}{\beta}}\bigr)e^{-\nu(t-s)} \\
& \leq C_1\|y\|_{\mathcal{X}_1}g(s)\bigl(1+(t-s)^{-\frac{1}{\beta}}\bigr)e^{-\nu(t-s)} \\
& \leq C_1\delta g(s)\bigl(1+(t-s)^{-\frac{1}{\beta}}\bigr)e^{-\nu(t-s)} , \\
|D_\infty^\beta S(t-s)(P(y(s)))| &\leq C_1\delta g(s)\bigl(1+(t-s)^{-\frac{1}{\beta}}\bigr)e^{-\nu(t-s)} .
\end{split}
\end{equation}
In the same way, the bound on $h$ proved in Lemma~\ref{lem:fixedpointh} below (notice that the assumption of the lemma is satisfied in view of \eqref{pfp5ter}) also guarantees that
\begin{equation} \label{pfp12}
\begin{split}
\|\bigl(S_n(t-s)-&S_\infty(t-s)\bigr)(h(y(s),A_\Lambda(s)))\|_\beta \\
& \leq C_1\|h(y(s),A_\Lambda(s))\|_{\beta-1} \bigl(1+(t-s)^{-\frac{1}{\beta}}\bigr)e^{-\nu(t-s)} \\
& \leq C_1C_2 \Bigl( \|y(s)\|_{\beta}^2 + |A_{\Lambda}(s)-A_M|\|y(s)\|_\beta \Bigr)\bigl(1+(t-s)^{-\frac{1}{\beta}}\bigr)e^{-\nu(t-s)} \\
& \leq C_1C_2 \Bigl( \|y\|_{\mathcal{X}_1}^2\bigl(g(s)\bigr)^2 + \bigl(C_3\delta + \delta_0\bigr)\|y\|_{\mathcal{X}_1}g(s) \Bigl)\bigl(1+(t-s)^{-\frac{1}{\beta}}\bigr)e^{-\nu(t-s)} \\
& \leq C_1C_2 \Bigl( \delta^2(g(s))^2 + \bigl( C_3\delta + \delta_0 \bigr)\delta g(s) \Bigr) \bigl(1+(t-s)^{-\frac{1}{\beta}}\bigr)e^{-\nu(t-s)} , \\
| D^\beta_\infty S(t-s)(&h(y(s),A_\Lambda(s))) | \\
& \leq C_1C_2 \Bigl( \delta^2(g(s))^2 + \bigl( C_3\delta + \delta_0 \bigr)\delta g(s) \Bigr) \bigl(1+(t-s)^{-\frac{1}{\beta}}\bigr)e^{-\nu(t-s)} .
\end{split}
\end{equation}
The previous estimates imply that the right-hand sides of \eqref{pfp3} and \eqref{pfp4} are well-defined quantities. The rest of the proof consists in showing that $\mathcal{T}$ is a contraction in the space $\mathcal{X}$, and that its fixed point is the sought solution to \eqref{fp1} satisfying $\lim_{n\to\infty}y_n(t)=0$.

\smallskip\noindent\textit{Step 1: $\mathcal{T}(y,\Lambda)\in\mathcal{X}$ for every $(y,\Lambda)\in\mathcal{X}$.}
By plugging the estimates \eqref{pfp10}, \eqref{pfp11}, and \eqref{pfp12} into \eqref{pfp3} we find
\begin{equation} \label{pfp16}
\begin{split}
\|\tilde{y}_n(t)\|_\beta
& \leq C_1\delta_0 g(t)
+ C_1\bigl(1+C_2\bigr)\delta^2 \int_0^t \bigl(g(s)\bigr)^2\bigl(1+(t-s)^{-\frac{1}{\beta}}\bigr)e^{-\nu(t-s)} \de s \\
& \qquad + C_1C_2\bigl( C_3\delta + \delta_0 \bigr)\delta \int_0^t g(s)\bigl(1+(t-s)^{-\frac{1}{\beta}}\bigr)e^{-\nu(t-s)}\de s \\
& \leq C_1\Bigl( \delta_0 + (1+C_2)C_4\delta^2 + C_2C_3C_4\delta^2 + C_2C_4\delta_0\delta \Bigr)g(t) \,,
\end{split}
\end{equation}
where we used the elementary estimates
\begin{equation} \label{pfp15}
\begin{split}
\int_0^t \bigl(g(s)\bigr)^2\bigl(1+(t-s)^{-\frac{1}{\beta}}\bigr)e^{-\nu(t-s)} \de s &\leq C_4 g(t),\\
\int_0^t g(s)\bigl(1+(t-s)^{-\frac{1}{\beta}}\bigr)e^{-\nu(t-s)}\de s &\leq C_4 g(t)
\end{split}
\end{equation}
for a uniform constant $C_4$. In particular, by choosing $\delta$ and $\delta_0$ sufficiently small, depending only on $M$, \eqref{pfp16} yields
\begin{equation*}
\|\tilde{y}\|_{\mathcal{X}_1} \leq \delta.
\end{equation*}

We similarly estimate $\tilde{\Lambda}$ in the space $\mathcal{X}_2$: by inserting \eqref{pfp10}, \eqref{pfp11}, and \eqref{pfp12} into \eqref{pfp4}, the very same computation as in \eqref{pfp16} gives
\begin{equation*}
\begin{split}
|\tilde{\Lambda}(t)|
& \leq \frac{2^\beta-1}{4} C_1\Bigl( \delta_0 + (1+C_2)C_4\delta^2 + C_2C_3C_4\delta^2 + C_2C_4\delta_0\delta \Bigr)g(t) \,,
\end{split}
\end{equation*}
and in turn
\begin{equation*}
\|\tilde{\Lambda}\|_{\mathcal{X}_2} \leq \delta.
\end{equation*}
This completes the proof of the claim.

\smallskip\noindent\textit{Step 2: contractivity.}
Let $(y^1,\Lambda^1),(y^2,\Lambda^2)\in\mathcal{X}$ and set $(\tilde{y}^i,\tilde{\Lambda}^i):=\mathcal{T}(y^i,\Lambda^i)$, $i=1,2$. In view of the definition \eqref{pfp3} of $\tilde{y}^i$ we have
\begin{equation} \label{pfp14}
\begin{split}
\big| \tilde{y}_n^1(t)-\tilde{y}_n^2(t) \big|
& \leq \int_0^t \big| \Lambda^1(s)-\Lambda^2(s)\big| \big|S_n(t-s)-S_\infty(t-s)\big|( P(y^1(s)) )\de s \\
& +  \int_0^t |\Lambda^2(s)| \big|S_n(t-s)-S_\infty(t-s)\big| \bigl( P(y^1(s)-y^2(s)) \bigr)\de s \\
& + \int_0^t \big|S_n(t-s)-S_\infty(t-s)\big| \bigl( h(y^1(s),A_{\Lambda^1}(s))-h(y^2(s),A_{\Lambda^2}(s)) \bigr)\de s \,.
\end{split}
\end{equation}
The first two integrals can be estimated using \eqref{pfp11}; for the last one, by Lemma~\ref{lem:fixedpointh} below we have, similarly to \eqref{pfp12},
\begin{equation*}
\begin{split}
\big\| \bigl(S_n(t-s)-&S_\infty(t-s)\bigr) \bigl( h(y^1(s),A_{\Lambda^1}(s))-h(y^2(s),A_{\Lambda^2}(s)) \bigr) \big\|_\beta \\
& \leq C_1\| h(y^1(s),A_{\Lambda^1}(s))-h(y^2(s),A_{\Lambda^2}(s)) \|_{\beta-1} \bigl(1+(t-s)^{-\frac{1}{\beta}}\bigr)e^{-\nu(t-s)} \\
& \leq C_1C_2 \biggl[ \|y^1(s)-y^2(s)\|_\beta \Bigl( \max\{ \|y^1(s)\|_\beta, \|y^2(s)\|_\beta\} + |A_M-A_{\Lambda^1}(s)| \Bigr) \\
& \qquad\qquad + |A_{\Lambda^1}(s)-A_{\Lambda^2}(s)| \Bigl( \|y^2(s)\|_\beta^2 + \|y^2(s)\|_\beta \Bigr) \biggr] \bigl(1+(t-s)^{-\frac{1}{\beta}}\bigr)e^{-\nu(t-s)} \\
& \leq C_1C_2 \biggl[ \|y^1-y^2\|_{\mathcal{X}_1}g(s) \Bigl( \delta g(s) + C_3\delta + \delta_0 \Bigr) \\
& \qquad\qquad + C_3\|\Lambda^1-\Lambda^2\|_{\mathcal{X}_2} \Bigl( \delta^2\bigl(g(s)\bigr)^2 + \delta g(s)\Bigr) \biggr]\bigl(1+(t-s)^{-\frac{1}{\beta}}\bigr)e^{-\nu(t-s)} \,,
\end{split}
\end{equation*}
where we used the bound $|A_{\Lambda^1}(s)-A_{\Lambda^2}(s)|\leq C_3\|\Lambda^1-\Lambda^2\|_{\mathcal{X}_2}$, which follows from \eqref{pfp5}.
From \eqref{pfp14} it is then straightforward to obtain an estimate of the form
\begin{multline*}
\big\| \tilde{y}^1(t) - \tilde{y}^2(t) \big\|_{\beta}
\leq C(\delta+\delta_0) \Bigl( \|\Lambda^1-\Lambda^2\|_{\mathcal{X}_2} + \|y^1-y^2\|_{\mathcal{X}_1} \Bigr)\\
\int_0^t \bigl[ \bigl(g(s)\bigr)^2 + g(s) \bigr] \bigl(1+(t-s)^{-\frac{1}{\beta}}\bigr)e^{-\nu(t-s)} \de s  \,,
\end{multline*}
for a big constant $C>0$ depending ultimately only on $M$, which in turn yields, recalling \eqref{pfp15},
\begin{equation*}
\big\| \tilde{y}^1 - \tilde{y}^2 \big\|_{\mathcal{X}_1} \leq C(\delta+\delta_0) \Bigl( \|\Lambda^1-\Lambda^2\|_{\mathcal{X}_2} + \|y^1-y^2\|_{\mathcal{X}_1} \Bigr) .
\end{equation*}
Starting from the inequality
\begin{equation*}
\begin{split}
\big| \tilde{\Lambda}^1(t) - \tilde{\Lambda}^2(t) \big|
& \leq \frac{1-2^\beta}{4} \int_0^t \big| \Lambda^1(s)-\Lambda^2(s)\big|  \big| D^\beta_\infty S(t-s)( P(y^1(s)) ) \big| \de s \\
& + \frac{1-2^\beta}{4} \int_0^t |\Lambda^2(s)| \big| D^\beta_\infty S(t-s)( P(y^1(s))-P(y^2(s)) )  \big| \de s \\
& + \frac{1-2^\beta}{4} \int_0^t \big| D^\beta_\infty S(t-s) \bigl( h(y^1(s),A_{\Lambda^1}(s)) - h(y^2(s),A_{\Lambda^2}(s)) \bigr)\big| \de s
\end{split}
\end{equation*}
a completely similar argument shows that
\begin{equation*}
\big\| \tilde{\Lambda}^1 - \tilde{\Lambda}^2 \big\|_{\mathcal{X}_1} \leq C(\delta+\delta_0) \Bigl( \|\Lambda^1-\Lambda^2\|_{\mathcal{X}_2} + \|y^1-y^2\|_{\mathcal{X}_1} \Bigr) .
\end{equation*}
Therefore the map $\mathcal{T}$ is a contraction in the space $\mathcal{X}$, provided that $\delta$ and $\delta_0$ are small enough (depending on $M$).

\smallskip\noindent\textit{Step 3: conclusion.}
By Banach's Fixed Point Theorem we have obtained the existence of a fixed point $(y,\Lambda)\in\mathcal{X}$ for the map $\mathcal{T}$; that is, denoting by $A(t):=A^0+\int_0^t\Lambda(s)\de s$, the maps $t\mapsto A(t)$, $t\mapsto y(t)$ satisfy the equations \eqref{fp5} and \eqref{fp7}, with $A(0)=A^0$, $y(0)=y^0$.

We now want to show that this is also a solution to the starting equation \eqref{fp1}. Notice first that, in view of \eqref{linear12}, we can rewrite \eqref{fp5} as
\begin{multline*}
\frac{\de A}{\de t}
= - \lim_{n\to\infty} \biggl[
\mathscr{L}_n\bigl(S(t)(y^0)\bigr) + \int_0^t \frac{\de A}{\de t}(s) \mathscr{L}_n\bigl( S(t-s)( P(y(s)) ) \bigr) \de s \\
+ \int_0^t \mathscr{L}_n\bigl( S(t-s)(h(y(s),A(s))) \bigr) \de s \biggr]
\end{multline*}
(where we can pass the limit under the integral sign in view of the uniform estimate \eqref{linear13} and the bounds on $y$ and $\frac{\de A}{\de t}$ in the space $\mathcal{X}$). By integrating this equation in $(0,t)$ we have (exchanging once more limit and integrals, thanks to the uniform estimates)
\begin{equation*}
\begin{split}
A(t) - A(0)
& = - \lim_{n\to\infty}\biggl[ \int_0^t \mathscr{L}_n\bigl(S(s)(y^0)\bigr)\de s
+ \int_0^t \de s \int_0^s \frac{\de A}{\de\xi}(\xi) \mathscr{L}_n\bigl( S(s-\xi)( P(y(\xi)) ) \bigr) \de\xi \\
& \qquad\qquad + \int_0^t \de s \int_0^s \mathscr{L}_n\bigl( S(s-\xi)(h(y(\xi),A(\xi))) \bigr) \de \xi \biggr] \\
& = - \lim_{n\to\infty}\biggl[ \int_0^t \frac{\de}{\de s} \bigl(S_n(s)(y^0)\bigr)\de s
+ \int_0^t \de s \int_0^s \frac{\de A}{\de\xi}(\xi) \frac{\de}{\de s}\bigl( S_n(s-\xi)( P(y(\xi)) ) \bigr) \de\xi \\
& \qquad\qquad + \int_0^t \de s \int_0^s \frac{\de}{\de s}\bigl( S_n(s-\xi)(h(y(\xi),A(\xi))) \bigr) \de \xi \biggr] \\
& = \lim_{n\to\infty}\biggl[ S_n(0)(y^0) - S_n(t)(y^0) \\
& \qquad\qquad + \int_0^t \frac{\de A}{\de\xi}(\xi) \Bigl( S_n(0)( P(y(\xi)) ) - S_n(t-\xi)( P(y(\xi)) ) \Bigr) \de\xi \\
& \qquad\qquad + \int_0^t \Bigl( S_n(0)(h(y(\xi),A(\xi))) - S_n(t-\xi)(h(y(\xi),A(\xi))) \Bigr) \de \xi \biggr] \\
& = - S_\infty(t)(y^0) - \int_0^t \frac{\de A}{\de\xi}(\xi)S_\infty(t-\xi)( P(y(\xi)) )\de\xi \\
& \qquad\qquad - \int_0^t S_\infty(t-\xi)(h(y(\xi),A(\xi)))\de\xi \,.
\end{split}
\end{equation*}
This is exactly the identity \eqref{fp6}. Finally, adding \eqref{fp6} to \eqref{fp7}, we conclude that the pair $(y,A)$ satisfies \eqref{fp3}, which is in turn equivalent to \eqref{fp1}.

We eventually show that $\lim_{t\to\infty}A(t)=A_M$. Denoting by $M(t)$ the mass of the stationary solution $g_p(\cdot;A(t))$, that is, $M(t)=\sum_{n=-\infty}^\infty 2^na_n(A(t))$, we have in view of the mass conservation property \eqref{dirac6}
\begin{equation*}
M = \sum_{n=-\infty}^\infty 2^n a_n(A(t))\bigl( 1+2^n y_n(t) \bigr) = M(t) + \sum_{n=-\infty}^\infty 2^{2n}a_n(A(t))y_n(t),
\end{equation*}
that is, using \eqref{fp10} and the asymptotic properties \eqref{peak5},
\begin{equation*}
|M-M(t)| \leq \biggl( \sum_{n=-\infty}^0 2^{n}a_n(A(t)) + \sum_{n=1}^\infty 2^{(2-\beta)n}a_n(A(t))\biggr) \|y(t)\|_\beta
\leq C (1+t^{-\frac{\beta-1}{\beta}})e^{-\frac{\nu}{2}t}
\end{equation*}
for a uniform constant $C$. Therefore $M(t)$ converges exponentially to $M$ as $t\to\infty$. As the parameter $A$ depends continuously on the mass of the corresponding stationary solution (see the last part of the proof of Proposition~\ref{prop:stationary}), the claim follows.
\end{proof}

The following technical lemma was instrumental in the proof of Proposition~\ref{prop:fixedpoint}.

\begin{lemma} \label{lem:fixedpointh}
Let $y\in\mathcal{Y}_\beta$, $A>0$, and let $h(y,A)$ be the sequence defined by \eqref{fp2}.
Assume also that $A\geq \frac12A_M$. Then there exists a constant $C_2$, depending on $M$, such that
\begin{equation} \label{pfp30}
\|h(y,A)\|_{\beta-1} \leq C_2 \|y\|_{\beta}^2 + C_2 |A_M-A|\|y\|_\beta \,.
\end{equation}
Furthermore, for every $y^1,y^2\in\mathcal{Y}_\beta$ and $A^1,A^2>\frac12 A_M$ we have
\begin{equation} \label{pfp31}
\begin{split}
\| h(y^1,A^1)-h(y^2,A^2) \|_{\beta-1} &\leq C_2\|y^1-y^2\|_\beta \Bigl( \max\{ \|y^1\|_\beta, \|y^2\|_\beta\} + |A_M-A^1| \Bigr) \\
& \qquad + C_2|A^1-A^2| \Bigl( \|y^2\|_\beta^2 + \|y^2\|_\beta \Bigr) \,.
\end{split}
\end{equation}
\end{lemma}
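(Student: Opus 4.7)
The plan is to decompose $h_n(y,A)$ into a purely quadratic piece
\[
Q_n := \tfrac{2^n\gamma(2^n)}{16}\, y_{n-1}^2 \;-\; 2^{n+1}\gamma(2^{n+1})\,\alpha_n(A)\, y_n^2
\]
and a piece linear in $y$
\[
L_n := 2\gamma(2^{n+1})\bigl(\alpha_n(A_M)-\alpha_n(A)\bigr)(y_n-y_{n+1})
\]
and bound each separately in the $\|\cdot\|_{\beta-1}$ norm, splitting the regimes $n\le 0$ and $n>0$. The inputs are the asymptotics $\gamma(2^n)\lesssim 1$ (resp. $\lesssim 2^{\beta n}$) from \eqref{kernel5}--\eqref{kernel5bis}, the bounds $|y_n|\le 2^{-n}\|y\|_\beta$ for $n\le 0$ and $|y_n|\le 2^{-\beta n}\|y\|_\beta$ for $n>0$, and the explicit representation \eqref{proofstat6} for $\alpha_n(A)$.

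For $Q_n$ with $n\le 0$, combining the above gives $|Q_n|\lesssim 2^{-n}\|y\|_\beta^2$, so $2^n|Q_n|\lesssim \|y\|_\beta^2$; for $n>0$, the first term is bounded by $C\,2^{(1-\beta)n}\|y\|_\beta^2$ and the second decays super-exponentially since $\alpha_n(A)\sim e^{-A2^n}/2^{\alpha-\beta+1}$ by \eqref{proofstat7}, so $2^{(\beta-1)n}|Q_n|\lesssim \|y\|_\beta^2$. For $L_n$, the key estimate is
\[
|\alpha_n(A_M)-\alpha_n(A)| \;\le\; |A_M-A|\,2^n\, e^{-\min(A_M,A)\,2^n}\,F_n,
\qquad F_n:=\exp\!\Bigl(-2^n\!\!\sum_{j=n+1}^{\infty}\!2^{-j}\ln\theta_{j-1}\Bigr),
\]
which follows from \eqref{proofstat6} and the mean value bound $|e^{-x}-e^{-y}|\le e^{-\min(x,y)}|x-y|$; the factor $F_n$ is uniformly bounded by \eqref{peak9}. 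Since the hypothesis $A\ge \tfrac{1}{2}A_M$ yields $\min(A_M,A)\ge \tfrac{1}{2}A_M$, for $n\le 0$ one gets $|L_n|\lesssim |A_M-A|\,\|y\|_\beta$, while for $n>0$ the super-exponential factor $e^{-(A_M/2)2^n}$ dominates the polynomial factors $2^{\beta n}\cdot 2^n$, giving $2^{(\beta-1)n}|L_n|\lesssim |A_M-A|\,\|y\|_\beta$. Summing both contributions yields \eqref{pfp30}.

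For the Lipschitz estimate \eqref{pfp31}, the standard splitting
\[
h(y^1,A^1)-h(y^2,A^2) \;=\; \bigl[h(y^1,A^1)-h(y^2,A^1)\bigr] \;+\; \bigl[h(y^2,A^1)-h(y^2,A^2)\bigr]
\]
reduces matters to two cases. The first bracket is linear in $y^1-y^2$ after factoring $(y^1_n)^2-(y^2_n)^2=(y^1_n+y^2_n)(y^1_n-y^2_n)$ in the quadratic part and using that the $\alpha$-dependent coefficients are the same on both terms; the same computations as in Step~1 then give a bound by $C\|y^1-y^2\|_\beta\bigl(\max\{\|y^1\|_\beta,\|y^2\|_\beta\}+|A_M-A^1|\bigr)$. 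The second bracket only picks up differences $\alpha_n(A^1)-\alpha_n(A^2)$ multiplied by $(y^2_n)^2$ or $y^2_n-y^2_{n+1}$, which by the same mean value estimate as above (now with $\min(A^1,A^2)\ge A_M/2$) yields the bound $C|A^1-A^2|\bigl(\|y^2\|_\beta^2+\|y^2\|_\beta\bigr)$.

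The main obstacle is the decay in $n$ of $|\alpha_n(A_M)-\alpha_n(A)|$ for large $n$: the naive bound $|\alpha_n(A_M)-\alpha_n(A)|\lesssim |A_M-A|\,2^n$ is inadequate because, combined with $\gamma(2^{n+1})\sim 2^{\beta(n+1)}$, it produces growth of order $2^{(\beta+1)n}$ in $L_n$ that cannot be absorbed by the weight $2^{(\beta-1)n}$ even after dividing by $|y_n-y_{n+1}|\sim 2^{-\beta n}\|y\|_\beta$. The hypothesis $A\ge \tfrac12 A_M$ is used precisely to extract the additional super-exponential factor $e^{-(A_M/2)2^n}$ that kills this growth; this is the one place in the argument where the pointwise lower bound on $A$ is essential.
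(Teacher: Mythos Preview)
Your proof is correct and follows essentially the same approach as the paper: the same decomposition into the quadratic and linear parts of $h_n$, the same mean-value estimate $|\alpha_n(A_M)-\alpha_n(A)|\lesssim 2^n e^{-\frac12 A_M 2^n}|A_M-A|$ from the explicit formula \eqref{proofstat6}, and the same case split $n\le 0$ versus $n>0$. The only cosmetic difference is that you name the pieces $Q_n,L_n$ and use the triangle-inequality splitting $h(y^1,A^1)-h(y^2,A^2)=[h(y^1,A^1)-h(y^2,A^1)]+[h(y^2,A^1)-h(y^2,A^2)]$, whereas the paper expands all four difference terms directly; the resulting estimates are identical.
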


\begin{proof}
Along the proof, the symbol $\lesssim$ will be used for inequalities up to constants which can depend only on the properties of the kernels and on $M$. We first notice that, in view of the explicit expression \eqref{proofstat6} of the coefficients $\alpha_n$, we have
\begin{equation} \label{pfp32}
\begin{split}
\big|\alpha_n(A_M)-\alpha_n(A)\big|
& = \big| e^{-A_M2^n}-e^{-A2^n}\big| \exp\biggl(-2^n\sum_{j=n+1}^\infty 2^{-j}\ln(\theta_{j-1})\biggr) \\
& \lesssim 2^n e^{-(A_M\wedge A)2^n}|A_M-A| \\
& \leq 2^n e^{-\frac12 A_M2^n}|A_M-A| \,.
\end{split}
\end{equation}
Similarly if $A^1,A^2>\frac12 A_M$
\begin{equation}\label{pfp33}
\big|\alpha_n(A^1)-\alpha_n(A^2)\big| \lesssim 2^n e^{-\frac12 A_M2^n}|A^1-A^2| \,.
\end{equation}

We now show \eqref{pfp30}. We first consider the case $n\leq0$. By using the definition of $h_n$, the estimate \eqref{pfp32}, and the asymptotics \eqref{kernel5bis}, \eqref{proofstat5} of $\gamma(2^n)$ and $\alpha_n$ as $n\to-\infty$, we find
\begin{equation*}
|h_n(y,A)| \lesssim 2^{-n}\|y\|_\beta^2 + |A_M-A|\|y\|_\beta \,.
\end{equation*}
For $n>0$, using \eqref{pfp32}, the asymptotics \eqref{kernel5} of $\gamma(2^n)$, and the asymptotics \eqref{proofstat7} of $\alpha_n$ as $n\to\infty$, we obtain
\begin{equation*}
\begin{split}
|h_n(y,A)|
& \lesssim 2^{-(\beta-1)n}\|y\|_\beta^2 + 2^ne^{-\frac12 A_M2^n}|A_M-A|\|y\|_\beta \\
& \lesssim 2^{-(\beta-1)n}\|y\|_\beta^2 + 2^{-(\beta-1)n}|A_M-A|\|y\|_\beta \,.
\end{split}
\end{equation*}
Then \eqref{pfp30} follows combining the previous estimates.

We next prove \eqref{pfp31}. For $n\leq0$, using the definition of $h$, the estimate \eqref{pfp33}, and the asymptotics of $\gamma(2^n)$ and $\alpha_n$, we find
\begin{equation*}
\begin{split}
|h_n(y^1,A^1)&-h_n(y^2,A^2)| \\
& \lesssim 2^{-n} \|y^1-y^2\|_\beta\|y^1+y^2\|_\beta
+ 2^{-n} |\alpha_n(A^1)-\alpha_n(A^2)| \|y^2\|_\beta^2 \\
& \qquad + 2^{-n}|\alpha_n(A_M)-\alpha_n(A^1)| \|y^1-y^2\|_\beta
+ 2^{-n}|\alpha_n(A^1)-\alpha_n(A^2) \|y^2\|_\beta \\
& \lesssim 2^{-n} \|y^1-y^2\|_\beta\|y^1+y^2\|_\beta + e^{-\frac12 A_M 2^n}|A^1-A^2| \|y^2\|_\beta^2 \\
& \qquad + e^{-\frac12 A_M 2^n}|A_M-A^1| \|y^1-y^2\|_\beta + e^{-\frac12 A_M 2^n}|A^1-A^2| \|y^2\|_\beta \,.
\end{split}
\end{equation*}
For $n>0$ we have instead
\begin{equation*}
\begin{split}
|h_n(y^1,A^1)&-h_n(y^2,A^2)| \\
& \lesssim 2^{-(\beta-1)n} \|y^1-y^2\|_\beta\|y^1+y^2\|_\beta
+ 2^{-(\beta-1)n} |\alpha_n(A^1)-\alpha_n(A^2)| \|y^2\|_\beta^2 \\
& \qquad + |\alpha_n(A_M)-\alpha_n(A^1)| \|y^1-y^2\|_\beta
+ |\alpha_n(A^1)-\alpha_n(A^2) \|y^2\|_\beta \\
& \lesssim 2^{-(\beta-1)n} \|y^1-y^2\|_\beta\|y^1+y^2\|_\beta + 2^ne^{-\frac12 A_M 2^n}|A^1-A^2| \|y^2\|_\beta^2 \\
& \qquad + 2^ne^{-\frac12 A_M 2^n}|A_M-A^1| \|y^1-y^2\|_\beta + 2^ne^{-\frac12 A_M 2^n}|A^1-A^2| \|y^2\|_\beta \,.
\end{split}
\end{equation*}
The two estimates combined give \eqref{pfp31}.
\end{proof}

\begin{remark} \label{rm:ass2}
The strategy of the proof of Proposition~\ref{prop:fixedpoint} requires the assumption $\beta<2$, as otherwise the first integral in \eqref{pfp15} would be divergent. The same approach could be followed also in the case $\beta\geq2$, but under the stronger assumption on the initial datum $\|y^0\|_\beta\leq\delta_0$. We however expect that the same statement can be proved also for $\beta\geq2$.
\end{remark}

The proof of Theorem~\ref{thm:dirac} follows now directly from Proposition~\ref{prop:fixedpoint}.

\begin{proof}[Proof of Theorem~\ref{thm:dirac}]
It is sufficient to reformulate the statement of Proposition~\ref{prop:fixedpoint} in terms of the variable $\e_n=2^ny_n$.
\end{proof}


\appendix
\section{Proof of the regularity result for the linearized problem} \label{sect:appendix}

This section is entirely devoted to the proof of the regularity result Theorem~\ref{thm:linear} on the linearized problem \eqref{linear1}. The analysis will be performed in two steps. We first show in Lemma~\ref{lem:linear1} the global well-posedness of \eqref{linear1}, via maximum principle arguments, for an initial datum  $y^0\in\mathcal{Y}_\theta$. This allows to define the semigroup $t\mapsto S(t)(y^0)$, see \eqref{linear10}. In the same lemma we prove that the solution converges uniformly to a constant, as $t\to\infty$, in regions $n\in(-\infty,n_0)$ for $n_0$ arbitrarily large. In a second step (Lemma~\ref{lem:linear2}) we analyze the behaviour of the solution for large values $n\to\infty$.

We observe that all the constants in the statements below will depend of course also on the properties of the coagulation and fragmentation kernels; however we will not mention this dependence explicitly, as they are fixed throughout the paper.

\begin{lemma} \label{lem:linear1}
Let $\theta\geq-1$ and let $y^0=\{y_n^0\}_{n\in\Z}\in\mathcal{Y}_\theta$ be a given initial datum. Then there exists a unique solution $t\mapsto y(t)=\{y_n(t)\}_{n\in\Z}$ to \eqref{linear1}, with $y(0)=y^0$, in the space $\mathcal{Y}$ for $\theta\geq0$ and in the space $\mathcal{Y}_\theta$ if $\theta<0$.

Moreover, for every sufficiently large $n_0\in\N$ there exists a constant $C_{n_0}>0$ (depending only $n_0$ and $M$) such that
\begin{equation} \label{linear5}
|y_n(t)-\bar{m}| \leq C_{n_0}\|y^0\|_\theta(2^{-n}+1)e^{-\nu t}  \qquad\text{for all $n\leq n_0$,}
\end{equation}
where $\nu>0$ depends only on $M$ and
\begin{equation} \label{linear6}
\bar{m} := \frac{\sum_{n=-\infty}^\infty 2^{2n}a_n(A_M)y_n^0}{\sum_{n=-\infty}^\infty 2^{2n}a_n(A_M)} \,.
\end{equation}
\end{lemma}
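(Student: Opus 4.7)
The plan is to view \eqref{linear1} as the generator of a continuous-time birth-death Markov chain on $\Z$ with backward rate $\gamma(2^n)/4$ and forward rate $\gamma(2^n)\sigma_n/4$, and to carry out three steps: (i) well-posedness by truncation, (ii) conservation of a weighted average, (iii) an $L^2$-plus-barrier decay argument.

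For well-posedness I would truncate to $n\in\{-N,\ldots,N\}$ with reflecting boundaries, obtaining a finite linear ODE. The right-hand side of \eqref{linear1} vanishes at constants and is a nonnegative combination of $(y_{n-1}-y_n)$ and $(y_{n+1}-y_n)$, so $\mathscr{L}_n(y)\leq 0$ at any spatial maximum and the standard maximum principle applies. For the weighted case I would check that the profile $w_n:=2^{-n}$ for $n\leq 0$ and $w_n:=2^{-\theta n}$ for $n>0$ satisfies $|\mathscr{L}_n(w)|\leq Cw_n$ uniformly in $n$ and $N$, yielding via Gr\"onwall a uniform bound $|y_n^N(t)|\leq e^{Ct}\|y^0\|_\theta w_n$. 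A diagonal compactness argument then yields a limit solution in $\mathcal{Y}_\theta$ (or $\mathcal{Y}$ when $\theta\geq 0$), and uniqueness follows from linearity and the comparison principle.

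Next comes the conservation law. Multiplying \eqref{linear1} by $\pi_n:=2^{2n}a_n(A_M)$ and summing via Abel summation produces differences $(y_{n+1}-y_n)$ weighted by $\pi_n\gamma(2^n)\sigma_n/4-\pi_{n+1}\gamma(2^{n+1})/4$, a bracket that vanishes identically because $\pi_{n+1}/\pi_n=4a_{n+1}/a_n=8\alpha_n(A_M)$ (using \eqref{peak8}) while $\sigma_n=8\alpha_n(A_M)\gamma(2^{n+1})/\gamma(2^n)$ by definition. This detailed balance implies that $\sum_n\pi_n y_n(t)$ is conserved (the sum converges by \eqref{peak5}), so $\bar m$ in \eqref{linear6} is well-defined and $u_n:=y_n-\bar m$ solves the same equation with $\sum_n\pi_n u_n=0$. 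The same detailed balance makes $\mathscr{L}$ self-adjoint on $L^2(\pi)$ with Dirichlet form $\mathcal{D}(u)=\tfrac14\sum_n\pi_n\gamma(2^n)(u_n-u_{n-1})^2\geq 0$, so $E(t):=\|u(t)\|_{L^2(\pi)}^2$ obeys $E'(t)=-2\mathcal{D}(u(t))$. A Poincar\'e inequality $\mathcal{D}(u)\geq\nu E(u)$ on the codimension-one subspace $\{\sum\pi_n u_n=0\}$ — provable by a discrete Hardy-type argument exploiting the double-exponential decay of $\pi_n$ at $\pm\infty$, with $\nu>0$ depending only on $M$ — then yields $E(t)\leq E(0)e^{-2\nu t}$.

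The main obstacle is converting the $L^2_\pi$ bound into the pointwise estimate \eqref{linear5}, since the naive Chebyshev estimate $|u_n|\leq\pi_n^{-1/2}E(t)^{1/2}$ only gives $2^{-3n/2}e^{-\nu t}$ as $n\to-\infty$, weaker than the required $2^{-n}e^{-\nu t}$. To fix this I would argue on the cylinder $\{n\leq n_0\}\times[0,\infty)$: the $L^2_\pi$ decay already controls $|u_{n_0+1}(t)|\leq C_{n_0}e^{-\nu t}\|y^0\|_\theta$ at the fixed index $n_0+1$, providing lateral data. Inside, compare $u$ with the barrier $V_n(t):=C_{n_0}\|y^0\|_\theta(2^{-n}+1)e^{-\nu t}$. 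The explicit computation $\mathscr{L}_n(2^{-n})=\tfrac{\gamma(2^n)}{4}\,2^{-n}(1-\sigma_n/2)$ together with \eqref{linear3} shows strong dissipativity as $n\to-\infty$ (since $\sigma_n\to 8$, so $1-\sigma_n/2\to -3$) and boundedness on the middle range $0\leq n\leq n_0$; the constant piece of $V$ (which lies in the kernel of $\mathscr{L}$) must be reconciled with the dynamics, which I would do either by refining the profile of $V$ near $n=n_0$ or by a Duhamel representation with the lateral decay at $n_0+1$ acting as a source term. The discrete parabolic comparison principle on the cylinder then yields \eqref{linear5}, completing the proof.
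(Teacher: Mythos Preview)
Your approach mirrors the paper's almost exactly: truncation plus maximum principle for well-posedness, the detailed-balance/$L^2_\pi$ energy identity together with a discrete Poincar\'e inequality for exponential decay, and a barrier argument for the left tail. The paper records the Poincar\'e inequality as a separate lemma and proves it by a discrete Hardy-type computation, much as you outline.

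The barrier step, however, has two gaps worth fixing. First, your candidate $V_n=C(2^{-n}+1)e^{-\nu t}$ is \emph{not} a supersolution on the whole cylinder $\{n\le n_0\}$: once $\sigma_n<2$ (which, by \eqref{linear3}, happens already for moderate positive $n$ since $\sigma_n\sim 2^{2\beta-\alpha+2}e^{-A_M2^n}\to 0$), one has
\[
\partial_t V_n-\mathscr{L}_n(V)=-Ce^{-\nu t}\Bigl[\nu(2^{-n}+1)+\tfrac{\gamma(2^n)}{4}\,2^{-n}(1-\sigma_n/2)\Bigr]<0,
\]
and neither a local ``refinement of the profile'' nor a Duhamel argument repairs this without further input. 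The paper sidesteps the issue cleanly: the $L^2_\pi$ decay already gives the pointwise bound on any fixed window $[-n_0,n_0]$ (the weight $\pi_n=2^{2n}a_n$ is bounded below there), so the barrier is only needed on $(-\infty,-n_0]$, with $n_0$ large enough that $\sigma_n>2$ throughout. Second, on that semi-infinite strip the comparison principle needs care at $n=-\infty$, because the a~priori bound from Step~1 is $|y_n(t)|\lesssim 2^{-n}e^{\mu t}$ with $\mu>0$, which is not dominated by $C\,2^{-n}e^{-\nu t}$ for large $t$. The paper adds an $\varepsilon\,4^{-n}$ term to the barrier (itself a supersolution, and one that dominates the solution at a moving left endpoint $-n_1$), applies the maximum principle on the finite box $[-n_1,-n_0]\times[0,T]$, and then sends $n_1\to\infty$, $\varepsilon\to0$, $T\to\infty$.
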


The bound $\theta\geq-1$ in the statement is artificial and not needed in the proof; we include it in order to obtain uniform constants with respect to $\theta$.
We remark for later use that, if $y^0\in\mathcal{Y}_\theta$, then thanks to the asymptotics \eqref{peak5} of $a_n$
\begin{equation} \label{linear6bis}
|\bar{m}| \leq C_0 \|y^0\|_\theta
\end{equation}
for a constant $C_0$ depending only on $M$.

\begin{proof}[Proof of Lemma~\ref{lem:linear1}]
Along the proof we simplify the notation by writing $a_n=a_n(A_M)$, $\alpha_n=\alpha_n(A_M)$.
We divide the proof of the lemma into three steps, first showing the well-posedness of \eqref{linear1} in $\mathcal{Y}_\theta$, then proving the estimate \eqref{linear5} in any bounded region $n\in[-n_0,n_0]$, and eventually proving \eqref{linear5} for $n<-n_0$.

\smallskip\noindent\textit{Step 1.}
We show the global well-posedness of \eqref{linear1}. Consider first the case $\theta\geq0$: in this case the sequence $y^0$ is bounded as $n\to\infty$. It is straightforward to check that the function
\begin{equation} \label{plinear9}
\bar{y}_n(t) :=
\begin{cases}
2^{-n}e^{\mu t} & n\leq0,\\
e^{\mu t} & n>0
\end{cases}
\end{equation}
is a supersolution of the problem \eqref{linear1}, that is $\frac{\de\bar{y}_n}{\de t} - \mathscr{L}_n(\bar{y})>0$, provided that 
\begin{equation*}
\mu > \max\Bigl\{ \frac{\gamma(1)}{4}, \, \sup_{n<0} \frac{\gamma(2^n)}{4}(1-\sigma_n/2) \Bigr\}
\end{equation*}
(such a value of $\mu$ exists thanks to the asymptotics \eqref{linear3} of $\sigma_n$). The existence of a solution to \eqref{linear1} can then be proved by a standard truncation argument: for $N\in\N$, one first constructs a solution $y^N(t) = \{ y_n^N(t) \}_{n\in\Z}$ to the finite set of equations
$$
\frac{\de y^N_n}{\de t}(t) = \mathscr{L}_n(y^N(t)), \qquad |n|\leq N,
$$
with boundary values $y_n^N(t)\equiv0$ for $|n|>N$, and initial datum $y_n^N(0)=y_n^0$ for $|n|\leq N$. By comparison with the supersolution \eqref{plinear9}, the Maximum Principle yields the estimate
\begin{equation*}
|y_n^N(t)|
\leq \|y^0\|_{\theta}\bar{y}_n(t)
\leq \|y^0\|_{\theta} (2^{-n}+1) e^{\mu t}
\qquad\text{for all $t>0$ and $|n|\leq N$,}
\end{equation*}
which is in particular uniform in $N$. Letting $N\to\infty$, by a standard compactness argument we obtain a solution $y(t)=\{y_n(t)\}_{n\in\Z}$ to \eqref{linear1} with $y(0)=y^0$, satisfying the estimate
\begin{equation} \label{plinear10}
\|y(t)\|_{0} \leq 2 \|y^0\|_{\theta}e^{\mu t} \qquad\text{for every $t>0$} \qquad\text{(if $\theta\geq0$).}
\end{equation}

This argument has to be modified in the case of an initial datum $y^0\in\mathcal{Y}_\theta$ with $\theta<0$ (unbounded as $n\to\infty$), since we are not allowed to compare with the supersolution \eqref{plinear9}. However, one can check that in this case the sequence
\begin{equation*}
\bar{y}_n(t) :=
\begin{cases}
2^{-n}e^{\mu t} & n\leq0,\\
2^{-\theta n}e^{\mu t} & n>0
\end{cases}
\end{equation*}
is a supersolution of the problem \eqref{linear1}, provided that $\mu$ is large enough (thanks to the fast decay of $\sigma_n$ as $n\to\infty$, see \eqref{linear3}); hence by repeating the previous argument we obtain a solution $y(t)=\{y_n(t)\}_{n\in\Z}$ to \eqref{linear1} with $y(0)=y^0$, satisfying the estimate
\begin{equation} \label{plinear10b}
\|y(t)\|_{\theta} \leq 2 \|y^0\|_{\theta}e^{\mu t} \qquad\text{for every $t>0$} \qquad\text{(if $\theta<0$).}
\end{equation}

The uniqueness of the solution can be also obtained by a maximum principle argument. As before, we first consider the case $\theta\geq0$: in this case we obtain uniqueness in the space $\mathcal{Y}$. Indeed, assume that $y(t)=\{y_n(t)\}_{n\in\Z}$ is a solution to \eqref{linear1}, with $y(t)\in\mathcal{Y}$ for every $t>0$, and $y(0)=0$. A direct computation shows that the sequence
\begin{equation*}
\tilde{y}_n(t) :=
\begin{cases}
4^{-n}e^{\tilde{\mu}t} & n < 0,\\
4^{n}e^{\tilde{\mu}t} & n \geq 0
\end{cases}
\end{equation*}
is a supersolution for \eqref{linear1}, provided $\tilde{\mu}$ is large enough. For every given $T>0$ and $\e>0$ we can find $\overline{N}=\overline{N}(\e,\sup_{t\in[0,T]}\|y(t)\|_\mathcal{Y})$ such that $|y_{\pm N}(t)| \leq \e\tilde{y}_{\pm N}(t)$ for all $N\geq\overline{N}$ and $t\in[0,T]$. By applying the maximum principle in the bounded region $(n,t)\in[-N,N]\times[0,T]$ we obtain
\begin{equation*}
-\e\tilde{y}_n(t) \leq y_n(t) \leq \e\tilde{y}_n(t) \qquad\text{for all $n\in[-N,N]$ and $t\in[0,T]$.}
\end{equation*}
By letting first $N\to\infty$, and then $\e\to0$, $T\to\infty$, we can conclude that $y_n(t)=0$ for all $n\in\Z$ and $t>0$.

In the case of an initial datum $y^0\in\mathcal{Y}_\theta$ with $\theta<0$, one can repeat the previous argument with the modified supersolution
\begin{equation*}
\tilde{y}_n(t) :=
\begin{cases}
2^{-(1-\theta)n}e^{\tilde{\mu}t} & n < 0,\\
2^{(1-\theta)n}e^{\tilde{\mu}t} & n \geq 0
\end{cases}
\end{equation*}
(for $\tilde{\mu}$ large enough), and obtain uniqueness in the space $\mathcal{Y}_\theta$.
This completes the proof of the well-posedness of \eqref{linear1}.

\smallskip\noindent\textit{Step 2.}
Recalling the definition \eqref{linear2} of $\sigma_n$ and \eqref{peak8}, it is straightforward to check that \eqref{linear1} can be written in the form
\begin{equation} \label{plinear11}
\frac{\de y_n}{\de t} = \frac{1}{2^{2n}a_n} D^-_n\Bigl( \bigl\{ 2^{2k}\gamma(2^{k+1})a_{k+1} D_k^+(y) \bigr\}_k \Bigr).
\end{equation}
Subtracting the quantity $\bar{m}$ defined in \eqref{linear6}, we have
\begin{equation*}
2^{2n}a_n \frac{\de}{\de t}(y_n-\bar{m}) =  D^-_n\Bigl( \bigl\{ 2^{2k}\gamma(2^{k+1})a_{k+1} D^+_k(y) \bigr\}_k \Bigr),
\end{equation*}
and in turn, multiplying by $y_n-\bar{m}$ and summing over $n$, we end up with
\begin{align*}
\frac{\de}{\de t} \biggl( \sum_{n=-\infty}^\infty 2^{2n}a_n&(y_n-\bar{m})^2 \biggr) \\
& = 2\sum_{n=-\infty}^\infty \Bigl( 2^{2n}\gamma(2^{n+1})a_{n+1} D_n^+(y) - 2^{2n-2}\gamma(2^{n})a_{n} D^+_{n-1}(y) \Bigr)(y_n-\bar{m}) \\
& = 2\sum_{n=-\infty}^\infty \Bigl( 2^{2n}\gamma(2^{n+1})a_{n+1} D^+_n(y) \Bigr)(y_n-y_{n+1}) \\
& = - 2\sum_{n=-\infty}^\infty 2^{2n} \gamma(2^{n+1})a_{n+1}(D^+_n(y))^2 \,.
\end{align*}
By Lemma~\ref{lem:poincare} below we obtain
\begin{equation*}
\frac{\de}{\de t} \biggl( \sum_{n=-\infty}^\infty 2^{2n}a_n(y_n-\bar{m})^2 \biggr)
\leq -\frac{2}{c_0} \sum_{n=-\infty}^\infty 2^{2n}a_n(y_n-\bar{m})^2  \,,
\end{equation*}
which in turn yields
\begin{equation} \label{plinear6}
\|y(t)-\bar{m}\|^2_{\ell^2(\Z;2^na_n^{1/2})} \leq e^{-\frac{2}{c_0}t}\|y^0-\bar{m}\|^2_{\ell^2(\Z;2^na_n^{1/2})} \,.
\end{equation}
Notice now that by \eqref{linear6bis}
\begin{equation} \label{plinear6bis}
\|y^0-\bar{m}\|^2_{\ell^2(\Z;2^na_n^{1/2})}
= \sum_{n=-\infty}^\infty 2^{2n}a_n(y_n^0-\bar{m})^2
\leq C_0'\|y^0\|_\theta^2
\end{equation}
for another constant $C_0'>0$ depending only on $M$. It is immediately seen that \eqref{plinear6} and \eqref{plinear6bis} imply the uniform convergence of $y_n(t)$ to the constant $\bar{m}$ as $t\to\infty$, for $n$ in any compact region: for every $n_0\in\N$ there exists a constant $C_{n_0}'$ (depending on $n_0$ and $M$) such that for every $t>0$
\begin{equation} \label{plinear7}
\sup_{n\in[-n_0,n_0]}|y_n(t)-\bar{m}| \leq C_{n_0}' e^{-\frac{t}{c_0}}\|y^0\|_{\theta} \,.
\end{equation}

\smallskip\noindent\textit{Step 3.}
It only remains to control the region $n<-n_0$. Let $T>0$ and $\e>0$ be fixed. Consider the sequence
\begin{equation*}
z_n(t) := C 2^{-n}e^{-\nu t} + \e 4^{-n},
\end{equation*}
where $C>0$ and $\nu>0$ are constant to be fixed later. We first observe that by as straightforward computation
\begin{equation*}
\frac{\de z_n}{\de t} - \mathscr{L}_n(z) = C2^{-n}e^{-\nu t} \Bigl( -\nu - \frac{\gamma(2^n)}{4}(1-\sigma_n/2) \Bigr) - \e\frac{\gamma(2^n)}{4^{n+1}}\Bigl(3-\frac34\sigma_n\Bigr).
\end{equation*}
Recalling that $\gamma(2^n)\to\gamma_0>0$ and $\sigma_n\to8$ as $n\to-\infty$, by choosing $\nu<\frac34\gamma_0$ we obtain that $z_n$ is a supersolution for \eqref{linear1} in the region $n\in(-\infty,-n_0]$, for every sufficiently large $n_0$. Furthermore, for $t=0$ (by \eqref{linear6bis})
\begin{equation*}
|y_n(0)-\bar{m}| \leq \|y^0\|_{\theta}(2^{-n}+C_0) \leq C2^{-n} \leq z_n(0) \qquad\text{for all }n\leq -n_0,
\end{equation*}
provided that we choose $C>\|y^0\|_{\theta}(1+C_0)$. By \eqref{plinear7}, for $n=-n_0$ we have
\begin{equation*}
|y_{-n_0}(t)-\bar{m}| \leq C_{n_0}' e^{-\frac{t}{c_0}}\|y^0\|_{\theta} \leq z_{-n_0}(t) \qquad\text{for every }t>0,
\end{equation*}
if we choose $\nu<\frac{1}{c_0}$ and $C>2^{-n_0}C_{n_0}' \|y^0\|_{\theta}$.
Finally, by \eqref{linear6bis} and \eqref{plinear10}--\eqref{plinear10b} we can choose $n_1>n_0$ sufficiently large, depending on $\e$ and $T$, such that
\begin{equation*}
|y_{-n_1}(t)-\bar{m}| \leq 2^{n_1+1}\|y^0\|_{\theta}e^{\mu t} + C_0\|y^0\|_\theta \leq \e4^{n_1} \leq z_{-n_1}(t) \qquad\text{for every }t\in[0,T].
\end{equation*}

Therefore, with the choices
\begin{equation}
C > \max\bigl\{ (1+C_0), 2^{-n_0}C_{n_0}' \bigr\}\|y^0\|_{\theta},
\qquad
\nu <\min\Bigl\{ \frac34\gamma_0, \frac{1}{c_0} \Bigr\},
\end{equation}
we can apply the Maximum Principle in the compact region $(n,t)\in[-n_1,-n_0]\times[0,T]$:
\begin{equation*}
|y_n(t)-\bar{m}| \leq z_n(t) \qquad\text{for all $n\in[-n_1,-n_0]$ and $t\in[0,T]$.}
\end{equation*}
Letting firstly $n_1\to\infty$, and then $\e\to0$, $T\to\infty$, the previous argument shows that
\begin{equation} \label{plinear8}
|y_n(t)-\bar{m}| \leq C2^{-n}e^{-\nu t} \qquad\text{for all $n\leq -n_0$ and $t>0$.}
\end{equation}
The estimate \eqref{linear5} follows by combining \eqref{plinear7} and \eqref{plinear8}.
\end{proof}

The following discrete Poincar\'e-type inequality is used in the proof of Lemma~\ref{lem:linear1}.

\begin{lemma}\label{lem:poincare}
With the notation introduced in Lemma~\ref{lem:linear1}, there exists a constant $c_0>0$ (depending on $M$) such that for every $t>0$
\begin{equation} \label{poincare}
\sum_{n=-\infty}^\infty 2^{2n}a_n(y_n(t)-\bar{m})^2 \leq c_0 \sum_{n=-\infty}^\infty 2^{2n} \gamma(2^{n+1})a_{n+1}(D^+_n(y(t)))^2 \,.
\end{equation}
\end{lemma}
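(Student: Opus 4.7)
The plan is to reduce this weighted Poincar\'e inequality to a pair of one-sided discrete weighted Hardy inequalities on $\{n \geq 1\}$ and $\{n \leq -1\}$, and then verify the associated Muckenhoupt-type conditions using the asymptotics of $a_n$ from Section~\ref{sect:stationary} and of $\gamma$ from \eqref{kernel5}--\eqref{kernel5bis}. Writing $w_n := 2^{2n}a_n$, $\omega_n := 2^{2n}\gamma(2^{n+1})a_{n+1}$ and $u_n := y_n(t) - \bar m$, I first note that the computation already performed in Step~2 of the proof of Lemma~\ref{lem:linear1} shows that $\sum_n w_n y_n(t)$ is conserved along the flow (the right-hand side of the rewritten equation is a discrete divergence whose boundary contributions vanish by the decay of $y \in \mathcal{Y}_\theta$). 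Hence $\sum_n w_n u_n = 0$ for all $t$, and the variational identity
\begin{equation*}
\sum_n w_n u_n^2 = \min_{c \in \R}\sum_n w_n (y_n-c)^2 \leq \sum_n w_n (y_n - y_0)^2
\end{equation*}
reduces matters to proving $\sum_n w_n v_n^2 \leq c_0 \sum_n \omega_n(v_{n+1}-v_n)^2$ for the anchored sequence $v_n := y_n - y_0$ (so that $v_0 = 0$).

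Splitting the sum at $n = 0$, I would invoke the one-sided discrete weighted Hardy inequality
$\sum_{n \geq 1} w_n v_n^2 \leq 4 A_+ \sum_{k \geq 0}\omega_k(v_{k+1}-v_k)^2$,
with Muckenhoupt constant $A_+ := \sup_{k \geq 1}\bigl(\sum_{n \geq k}w_n\bigr)\bigl(\sum_{j=0}^{k-1}\omega_j^{-1}\bigr)$; a self-contained proof uses Cauchy-Schwarz against geometric auxiliary weights $\lambda_k := 2^{\alpha k}$ (for some $\alpha > 0$ small), followed by exchanging the order of summation. The analogue on $\{n \leq -1\}$, with symmetric constant $A_- := \sup_{k \leq -1}\bigl(\sum_{n \leq k}w_n\bigr)\bigl(\sum_{j=k}^0 \omega_j^{-1}\bigr)$, follows after reindexing $n \mapsto -n$.

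Finiteness of $A_\pm$ reduces to elementary asymptotic analysis using \eqref{peak5}: near $-\infty$, $w_n \sim a_{-\infty}2^{3n}$ and $\omega_n \sim 2\gamma_0 a_{-\infty} 2^{3n}$, so $\sum_{n \leq k}w_n \sim 2^{3k}$ and $\sum_{j=k}^0\omega_j^{-1}\sim 2^{-3k}$ with exactly cancelling exponents; near $+\infty$, the super-exponential decay $a_n \sim a_\infty 2^{(\beta-\alpha)n}e^{-A_M 2^n}$ makes each tail sum equivalent to a single extremal term, and a short computation gives $\bigl(\sum_{n \geq k} w_n\bigr)\bigl(\sum_{j=0}^{k-1}\omega_j^{-1}\bigr) \sim w_k \,\omega_{k-1}^{-1} = 4/\gamma(2^k) \to 0$. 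Continuity handles the intermediate range, so $c_0 := 4(A_++A_-) < \infty$ and depends only on $M$. The delicate point is the scale-matching at $n \to -\infty$: because $w_n$ and $\omega_n$ decay at the \emph{same} geometric rate there (critical case), the naive unweighted Cauchy-Schwarz bound $v_n^2 \leq (\sum_{k \leq -1}\omega_k^{-1})(\sum_{k \leq -1}\omega_k(v_{k+1}-v_k)^2)$ is useless because $\sum_{k \leq 0}\omega_k^{-1} = +\infty$; it is precisely the Muckenhoupt pairing of the cumulative mass $\sum_{n \leq k}w_n$ against the truncated inverse-tail $\sum_{j=k}^0\omega_j^{-1}$ that exploits the exact cancellation of the factors $2^{3k}$ and $2^{-3k}$ and produces the required uniform bound.
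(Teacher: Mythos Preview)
Your argument is correct and takes a genuinely different route from the paper's. Both proofs share the initial reduction to the anchored sequence $v_n = y_n - y_0$ and the split at $n=0$, but thereafter diverge. For $n\geq 1$ the paper uses a crude Cauchy--Schwarz $v_n^2 \leq n\sum_{k=0}^{n-1}(D_k^+ y)^2$, exchanges the order of summation, and then exploits the \emph{recursive} identity $a_n = \bigl(\prod_m 2\alpha_m\bigr)a_{k+1}$ together with the explicit formula \eqref{proofstat6} to bound the resulting tails; for $n\leq -1$ the paper performs a change of variables $z_n = 2^n a_n^{1/2}(y_n-y_0)$, $x_n = z_{n+1} - 2(a_{n+1}/a_n)^{1/2}z_n$, derives a geometric recurrence for $z_n$ in terms of $x_n$ (the ratio $2\sqrt{2\alpha_n}\to 2\sqrt{2}$), and closes with a discrete Young's convolution inequality. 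Your approach is more systematic: you recognize both halves as instances of the discrete weighted Hardy inequality and check the Muckenhoupt conditions directly from the \emph{asymptotics} \eqref{peak5} of $a_n$, without using the recursion. This is cleaner and more robust---in particular your observation that the $n\to -\infty$ side is critical (both $w_n$ and $\omega_n$ decay like $2^{3n}$, so only the paired Muckenhoupt sums are bounded) explains transparently why the naive Cauchy--Schwarz fails there, whereas the paper's change of variables obscures this. The paper's approach, on the other hand, is fully self-contained and does not appeal to the Muckenhoupt--Hardy theory as a black box. One minor point: your index range $\sum_{j=k}^{0}\omega_j^{-1}$ in $A_-$ should be $\sum_{j=k}^{-1}$, but this differs by the single finite term $\omega_0^{-1}$ and does not affect the conclusion.
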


\begin{proof}
We claim that there exists a constant $c_1>0$ such that
\begin{equation} \label{proofpoincare1}
\sum_{n=-\infty}^\infty 2^{2n}a_n(y_n(t)-y_0(t))^2 \leq c_1 \sum_{n=-\infty}^\infty 2^{2n} \gamma(2^{n+1})a_{n+1}(D^+_n(y(t)))^2 \,.
\end{equation}
Notice that the conclusion of the lemma will follow easily from \eqref{proofpoincare1}: indeed, we can write
\begin{equation*}
\bar{m} = \frac{\sum_{n=-\infty}^\infty 2^{2n}a_n y_n(t)}{\sum_{n=-\infty}^\infty 2^{2n}a_n} \,,
\end{equation*}
since the right-hand side is actually independent of $t$, as can be easily checked by using the equation \eqref{linear1}; we then have
\begin{equation*}
\begin{split}
\biggl(\sum_{n=-\infty}^\infty 2^{2n}a_n\biggr) (\bar{m}-y_0(t))
& = \sum_{n=-\infty}^\infty 2^{2n}a_n(y_n(t)-y_0(t)) \\
& \leq \biggl(\sum_{n=-\infty}^\infty 2^{2n}a_n(y_n(t)-y_0(t))^2\biggr)^\frac12 \biggl(\sum_{n=-\infty}^\infty 2^{2n}a_n\biggr)^\frac12,
\end{split}
\end{equation*}
from which it follows that
\begin{equation*}
\sum_{n=-\infty}^\infty 2^{2n}a_n (\bar{m}-y_0(t))^2 \leq \sum_{n=-\infty}^\infty 2^{2n}a_n(y_n(t)-y_0(t))^2.
\end{equation*}
Therefore, assuming that \eqref{proofpoincare1} holds,
\begin{equation*}
\begin{split}
\sum_{n=-\infty}^\infty 2^{2n}a_n(y_n(t)-\bar{m})^2
& \leq 2\sum_{n=-\infty}^\infty 2^{2n}a_n(y_n(t)-y_0(t))^2 + 2\sum_{n=-\infty}^\infty 2^{2n}a_n(y_0(t)-\bar{m})^2 \\
& \leq 4c_1 \sum_{n=-\infty}^\infty 2^{2n} \gamma(2^{n+1})a_{n+1}(D^+_n(y(t)))^2 \,,
\end{split}
\end{equation*}
which gives \eqref{poincare}. We are then left with the proof of \eqref{proofpoincare1}, which we show in two steps. In the following we omit the dependence on the variable $t$, which is fixed.

\smallskip\noindent\textit{Step 1: $n\geq1$.} By writing $y_n=y_0+\sum_{k=0}^{n-1}D^+_k(y)$ we find
\begin{equation} \label{proofpoincare2}
\begin{split}
\sum_{n=1}^\infty 2^{2n}a_n(y_n-y_0)^2
& = \sum_{n=1}^\infty 2^{2n}a_n\biggl(\sum_{k=0}^{n-1}D^+_k(y)\biggr)^2
\leq \sum_{n=1}^\infty 2^{2n}a_n n\sum_{k=0}^{n-1}|D^+_k(y)|^2 \\
& = \sum_{k=0}^\infty \biggl(\sum_{n=k+1}^{\infty}2^{2n}a_n n\biggr)|D^+_k(y)|^2 \,.
\end{split}
\end{equation}
Recalling \eqref{peak8} and \eqref{proofstat6}, for $n\geq k+2$ we have
\begin{equation*}
\begin{split}
a_n
& = \biggl(\prod_{m=k+1}^{n-1}2\alpha_m\biggr)a_{k+1} \\
& = 2^{n-k-1}\exp\biggl(-A_M\sum_{m=k+1}^{n-1}2^m\biggr)\exp\biggl(-\sum_{m=k+1}^{n-1}2^{m}\sum_{j=m+1}^\infty2^{-j}\ln(\theta_{j-1})\biggr)a_{k+1} \\
& \leq 2^{n-k-1}e^{-A_M(2^n-2^{k+1})}\exp\Bigl((n-k-1)\sup_{j\in\Z}|\ln(\theta_j)|\Bigr)a_{k+1} \\
& \leq 2^{c(n-k-1)}e^{-A_M2^n(1-2^{k+1-n})} a_{k+1}
\end{split}
\end{equation*}
for a uniform constant $c$ (notice that the coefficients $\theta_j$ depend only on the coagulation and fragmentation kernels, and the asymptotics \eqref{peak9} yields the uniform boundedness of $|\ln(\theta_j)|$). By inserting this estimate in \eqref{proofpoincare2} we find
\begin{equation*}
\begin{split}
\sum_{n=1}^\infty 2^{2n}a_n(y_n-y_0)^2
& \leq \sum_{k=0}^\infty  \biggl(\sum_{n=k+1}^{\infty}2^{2n}n 2^{c(n-k-1)}e^{-A_M2^n(1-2^{k+1-n})} \biggr) a_{k+1} |D^+_k(y)|^2 \\
& \leq \sum_{k=0}^\infty  \biggl(\sum_{n=k+1}^{\infty}2^{2(n-k)}n 2^{c(n-k-1)}e^{-A_M2^n(1-2^{k+1-n})} \biggr) 2^{2k}a_{k+1} |D^+_k(y)|^2 \,.
\end{split}
\end{equation*}
The strict positivity of $\gamma(\xi)$, together with the asymptotics \eqref{kernel5}, yields the existence of a constant $c_1$ (depending on $M$) such that
\begin{equation*}
\sum_{n=1}^\infty 2^{2n}a_n(y_n-y_0)^2 \leq c_1 \sum_{n=0}^\infty 2^{2n} \gamma(2^{n+1})a_{n+1}(D^+_n(y))^2 \,.
\end{equation*}

\smallskip\noindent\textit{Step 2: $n\leq-1$.} Introduce variables $z_n:=2^na_n^{1/2}(y_n-y_0)$, $x_n:=z_{n+1}-2(\frac{a_{n+1}}{a_{n}})^{1/2}z_{n}$, so that
\begin{equation*}
D^+_n(y)=D^+_n(y-y_0) = 2^{-(n+1)}a_{n+1}^{-1/2}z_{n+1} - 2^{-n}a_{n}^{-1/2}z_{n}= \frac{x_n}{2^{n+1}a_{n+1}^{1/2}} \,.
\end{equation*}
In these variables the claim amounts to show that
\begin{equation} \label{proofpoincare5}
\sum_{n=-\infty}^{-1} |z_n|^2 \leq c_1\sum_{n=-\infty}^{-1} \gamma(2^{n+1})|x_n|^2 \,.
\end{equation}
By using the definition of $x_n$ and recalling \eqref{peak8} we find the recurrence formula
\begin{equation} \label{proofpoincare6}
z_n = - \sum_{k=0}^{-n-1} (2\sqrt{2})^{-(k+1)}\biggl(\prod_{j=0}^{k}\alpha^{-1/2}_{n+j}\biggr) x_{n+k} \qquad\text{for all $n\leq-1$.}
\end{equation}
Now for all $n\leq-1$ and $k\in\{0,\ldots,-n-1\}$ we have by \eqref{proofstat6}
\begin{align*}
\prod_{j=0}^{k}\alpha^{-1/2}_{n+j}
& = \exp\biggl(A_M\sum_{j=0}^k 2^{n+j-1} \biggr) \exp\biggl( \sum_{j=0}^k 2^{n+j-1}\sum_{\ell=n+j+1}^\infty 2^{-\ell}\ln(\theta_{\ell-1}) \biggr) \\
& \leq e^{A_M}\exp\biggl(\sum_{\ell=-\infty}^\infty 2^{-\ell}|\ln(\theta_{\ell-1})|\biggr) =: \bar{c}
\end{align*}
(recall that the series converges thanks to the asymptotics \eqref{peak9} of the coefficients $\theta_\ell$). Combining this estimate with \eqref{proofpoincare6} we obtain
\begin{align} \label{proofpoincare7}
\sum_{n=-\infty}^{-1}|z_n|^2
\leq \sum_{n=-\infty}^{-1} \biggl( \bar{c}\sum_{k=0}^{-n-1} (2\sqrt{2})^{-(k+1)} x_{n+k} \biggr)^2
= \frac{\bar{c}^2}{8} \sum_{n=-\infty}^{-1} \biggl( \sum_{m=n}^{-1} (2\sqrt{2})^{n-m} x_{m} \biggr)^2 \,.
\end{align}
To complete the proof, we use a discrete version of Young's convolution inequality: letting $f_n=\sum_{m=n}^{-1} (2\sqrt{2})^{n-m} x_{m}$ we have
\begin{align*}
\sum_{n=-\infty}^{-1}f_n^2
& = \sum_{n=-\infty}^{-1}\sum_{m=n}^{-1} f_n(2\sqrt{2})^{n-m} x_{m} \\
& \leq \biggl(\sum_{n=-\infty}^{-1}\sum_{m=n}^{-1}f_n^2(2\sqrt{2})^{n-m} \biggr)^\frac12 \biggl(\sum_{n=-\infty}^{-1}\sum_{m=n}^{-1} x_m^2(2\sqrt{2})^{n-m} \biggr)^\frac12 \\
& \leq \biggl(\sum_{n=-\infty}^{-1}f_n^2\biggr)^\frac12 \biggl(\sum_{j=0}^{\infty}(2\sqrt{2})^{-j} \biggr) \biggl(\sum_{n=-\infty}^{-1}x_n^2\biggr)^\frac12 ,
\end{align*}
that is,
\begin{equation} \label{proofpoincare8}
\sum_{n=-\infty}^{-1}f_n^2 \leq 4\sum_{n=-\infty}^{-1}x_n^2 \,.
\end{equation}
By inserting \eqref{proofpoincare8} into \eqref{proofpoincare7} we end up with
\begin{equation*}
\sum_{n=-\infty}^{-1}|z_n|^2  \leq \frac{\bar{c}^2}{2}\sum_{n=-\infty}^{-1}x_n^2 \,.
\end{equation*}
The existence of a constant $c_1$ for which \eqref{proofpoincare5} holds follows now from the strict positivity of $\gamma$ and from \eqref{kernel5bis}.
\end{proof}

In order to study the behaviour of solutions to the linearized equation \eqref{linear1} as $n\to\infty$, in a first approximation we can neglect the term containing the coefficients $\sigma_n$, as its contribution will be negligible for large values of $n$ in view of the fast decay \eqref{linear3}. We will only consider the region $n\geq n_0$, where $n_0\in\N$ is a sufficiently large constant. In particular, for those values we can use the asymptotics \eqref{kernel5}, and we will always assume without loss of generality that
\begin{equation} \label{plinear20}
\gamma(2^n)<\gamma(2^{n+1}),
\qquad
\frac{1}{2} \, 2^{\beta(n-m)} \leq \frac{\gamma(2^n)}{\gamma(2^m)} \leq \frac32 \, 2^{\beta(n-m)}
\qquad
\text{for all $n,m\geq n_0$.}
\end{equation}
We now construct the fundamental solution to the simplified problem without $\sigma_n$.

\begin{lemma} \label{lem:linearfundsol}
Let $n_0\in\N$ be such that \eqref{plinear20} holds. For $\ell\in\Z$, $\ell\geq n_0$, let $\Psi^{(\ell)}_n$ be the solution to the problem
\begin{equation} \label{linearfund1}
\begin{cases}
\frac{\de \Psi_n^{(\ell)}}{\de t} = \frac{\gamma(2^n)}{4}\bigl( \Psi_{n-1}^{(\ell)} - \Psi_n^{(\ell)} \bigr) ,\\
\Psi_n^{(\ell)}(0) = \delta(n-\ell).
\end{cases}
\end{equation}
Then there exists a uniform constant $c_1>0$ such that
\begin{equation} \label{linearfund2}
\big| \Psi_n^{(\ell)}(t) - \Psi_{n+1}^{(\ell)}(t) \big| \leq c_1 2^{-\beta(n-\ell)}e^{-\frac{\gamma(2^\ell)}{4} t} \qquad\text{for all $n\geq\ell\geq n_0$.}
\end{equation}
In particular, there exists the limit $\Psi^{(\ell)}_\infty(t):=\lim_{n\to\infty}\Psi^{(\ell)}_n(t)$, which satisfies
\begin{equation} \label{linearfund3}
\big| \Psi_n^{(\ell)}(t) - \Psi_{\infty}^{(\ell)}(t) \big| \leq c_1 2^{-\beta(n-\ell)}e^{-\frac{\gamma(2^\ell)}{4} t} \qquad\text{for all $n\geq\ell\geq n_0$.}
\end{equation}
Finally, there exists also the limit
\begin{equation} \label{linearfund4}
D^\beta_\infty \Psi^{(\ell)}(t) := \lim_{n\to\infty} 2^{\beta n}\Bigl( \Psi_n^{(\ell)}(t) - \Psi_{\infty}^{(\ell)}(t) \Bigr),
\quad\text{with }\;
|D^\beta_\infty \Psi^{(\ell)}(t)| \leq c_12^{\beta\ell}e^{-\frac{\gamma(2^\ell)}{4}t} \,.
\end{equation}
\end{lemma}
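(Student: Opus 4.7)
My plan is to exploit the strict one-way (upward) structure of \eqref{linearfund1}: since only $\Psi_{n-1}^{(\ell)}$ feeds $\Psi_n^{(\ell)}$, one automatically has $\Psi_n^{(\ell)}\equiv 0$ for $n<\ell$. Writing $\mu_n := \gamma(2^n)/4$, the solution can be constructed inductively via Duhamel: $\Psi_\ell^{(\ell)}(t) = e^{-\mu_\ell t}$ and, for $n>\ell$,
\[
\Psi_n^{(\ell)}(t) = \mu_n\int_0^t e^{-\mu_n(t-s)}\Psi_{n-1}^{(\ell)}(s)\,\de s.
\]
This yields global existence, positivity, and the closed-form Laplace transform $\hat{\Psi}_n^{(\ell)}(s) = (s+\mu_\ell)^{-1}\prod_{j=\ell+1}^n \mu_j/(s+\mu_j)$, useful for later verification.

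For the key difference estimate \eqref{linearfund2}, I would set $w_n := \Psi_n^{(\ell)} - \Psi_{n+1}^{(\ell)}$ and check that for $n>\ell$ it satisfies $\dot{w}_n = \mu_n w_{n-1} - \mu_{n+1} w_n$ with $w_n(0) = 0$, the base case being the explicit formula $w_\ell(t) = (\mu_{\ell+1}e^{-\mu_{\ell+1}t} - \mu_\ell e^{-\mu_\ell t})/(\mu_{\ell+1}-\mu_\ell)$. Duhamel gives $w_n(t) = \mu_n\int_0^t e^{-\mu_{n+1}(t-s)}w_{n-1}(s)\,\de s$, so the quantity $f_n := \sup_{t\geq 0}|w_n(t)|e^{\mu_\ell t}$ obeys the recursion $f_n \leq f_{n-1}\cdot \mu_n/(\mu_{n+1}-\mu_\ell)$, and iteration with telescoping yields
\[
f_n \leq f_\ell \cdot \frac{\mu_{\ell+1}}{\mu_{n+1}}\prod_{j=\ell+2}^{n+1}\frac{\mu_j}{\mu_j-\mu_\ell}.
\]
By \eqref{plinear20} the first factor is at most $2\cdot 2^{-\beta(n-\ell)}$, and since $\mu_\ell/\mu_j\leq 2\cdot 2^{-\beta(j-\ell)}$ with $\beta>1$, the infinite product converges uniformly in $\ell\geq n_0$; the uniform bound on $f_\ell$ follows from the explicit formula together with $\mu_{\ell+1}/\mu_\ell\geq 2^{\beta-1}>1$. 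Once \eqref{linearfund2} holds, the existence of $\Psi_\infty^{(\ell)}(t)$ and the bound \eqref{linearfund3} follow by summing the geometric series $\sum_{k\geq n}|w_k(t)|$.

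For the existence of $D^\beta_\infty \Psi^{(\ell)}(t)$, I would work with $v_n := 2^{\beta n}w_n$, which satisfies $\dot v_n = 2^\beta\mu_n v_{n-1} - \mu_{n+1}v_n$ with $v_n(0) = 0$ for $n>\ell$ and the uniform bound $|v_n(t)|\leq c_1 2^{\beta\ell}e^{-\mu_\ell t}$ inherited from \eqref{linearfund2}. Since $\mu_{n+1}/(2^\beta\mu_n)\to 1$ by \eqref{kernel5}, the two coefficients of this equation become asymptotically equal, so the very same Duhamel-plus-telescoping machinery, applied this time to the successive differences $u_n := v_{n+1}-v_n$ (with the mismatch $2^\beta\mu_n - \mu_{n+1} = \mu_n\cdot o(1)$ treated perturbatively), should yield a summable bound on $\|u_n(t)\|_\infty$ over bounded time intervals and hence show that $\{v_n(t)\}$ is Cauchy with a limit $v_\infty(t)$. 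The identity $2^{\beta n}(\Psi_n^{(\ell)}-\Psi_\infty^{(\ell)}) = -\sum_{j\geq 0}2^{-\beta j}v_{n+j}$ combined with dominated convergence then produces the limit $D^\beta_\infty\Psi^{(\ell)}(t)$, and the bound \eqref{linearfund4} is inherited from the uniform estimate on $v_n$.

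The main technical obstacle is the telescoping step in the second paragraph: ensuring that both the decay factor $\mu_{\ell+1}/\mu_{n+1}\lesssim 2^{-\beta(n-\ell)}$ and the infinite product $\prod_j\mu_j/(\mu_j-\mu_\ell)$ carry constants independent of $\ell\geq n_0$. This is precisely where the hypothesis $\beta>1$ together with the two-sided asymptotic bounds \eqref{plinear20} become indispensable; without them, either the product could diverge or the geometric decay would degrade, and the whole cascade of estimates would collapse.
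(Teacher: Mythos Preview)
Your approach to \eqref{linearfund2} and \eqref{linearfund3} is correct and genuinely different from the paper's. The paper computes the Laplace transform, inverts it via residues to obtain the explicit partial-fraction formula
\[
\Psi_n^{(\ell)}(t)=\sum_{k=\ell}^n \frac{\gamma(2^k)}{\gamma(2^\ell)} \prod_{\substack{j=\ell\\j\neq k}}^n \Bigl(1-\tfrac{\gamma(2^k)}{\gamma(2^j)}\Bigr)^{-1} e^{-\frac{\gamma(2^k)}{4}t},
\]
and then bounds the products term by term. Your Duhamel recursion on the differences $w_n$, with the telescoping estimate $f_n\le f_{\ell}\,\frac{\mu_{\ell+1}}{\mu_{n+1}}\prod_{j=\ell+2}^{n+1}\frac{\mu_j}{\mu_j-\mu_\ell}$, is more elementary and avoids residue calculus entirely; it also makes the role of \eqref{plinear20} and $\beta>1$ very transparent. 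Both approaches give the same bound with uniform constants.

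For \eqref{linearfund4}, however, your plan as written has a real gap. The equation $\dot u_n+\mu_{n+2}u_n = 2^\beta\mu_n u_{n-1} + R_n v_n$ that governs $u_n=v_{n+1}-v_n$ contains the mismatch term $R_n v_n$ with $R_n=\mu_n\cdot o(1)$, and after Duhamel this contributes an error of size $o(1)$ (times the uniform bound on $v_n$) to $\sup_t e^{\mu_\ell t}|u_n(t)|$. The resulting inequality is of the form $g_n\le q\,g_{n-1}+C\delta_n$ with $q<1$ and $\delta_n\to 0$, and this gives $g_n\to 0$ but \emph{not} summability: take $g_n=1/n$, $\delta_n\sim(1-q)/n$. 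Since the hypothesis \eqref{kernel5} provides only $\gamma(2^{n+1})/(2^\beta\gamma(2^n))\to 1$ with no rate, you cannot assume $\delta_n$ is summable, so the argument does not close and the convergence of $v_n$ remains unproved. The paper sidesteps this entirely: from the explicit formula it writes $2^{\beta n}(\Psi_n^{(\ell)}-\Psi_\infty^{(\ell)})$ as an explicit sum and computes the limit directly via $\lim_{n\to\infty}2^{\beta n}\bigl[\prod_{j>n}(1-\gamma(2^k)/\gamma(2^j))-1\bigr]=-\gamma(2^k)/(2^\beta-1)$, which needs only the asymptotic $\gamma(\xi)\sim\xi^\beta$. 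One way to repair your route without residues: observe that $P_n:=\prod_{k=\ell+1}^n\rho_k=2^{\beta(n-\ell)}\mu_{\ell+1}/\mu_{n+1}$ telescopes and therefore converges (using only \eqref{kernel5}); the normalized $V_n:=v_n/P_n$ then satisfies the \emph{clean} recursion $V_n(t)=\mu_{n+1}\int_0^t e^{-\mu_{n+1}(t-s)}V_{n-1}(s)\,\de s$, and for this system integration by parts gives $|V_n-V_{n-1}|\le \frac{\mu_n}{\mu_{n+1}-\mu_\ell}\sup_s e^{\mu_\ell s}|V_{n-1}-V_{n-2}|\cdot e^{-\mu_\ell t}$, which does telescope to a summable bound.
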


\begin{proof}
We compute the Laplace transform $\widetilde{\Psi}_n^{(\ell)}(z)=\int_0^\infty\Psi_n^{(\ell)}(t)e^{-zt}\de t$ of $\Psi_n^{(\ell)}$ , which solves
\begin{equation*}
z\widetilde{\Psi}_n^{(\ell)}(z) = \frac{\gamma(2^n)}{4}\bigl( \widetilde{\Psi}_{n-1}^{(\ell)}(z) - \widetilde{\Psi}_n^{(\ell)}(z) \bigr) + \delta(n-\ell),
\end{equation*}
and therefore it is explicitly given by the recurrence formula
\begin{equation*}
\widetilde{\Psi}_n^{(\ell)}(z) =
\begin{cases}
0 & n<\ell,\\
\frac{4}{\gamma(2^\ell)} \prod_{k=\ell}^n \bigl( 1 + \frac{4}{\gamma(2^k)}z \bigr)^{-1} & n\geq\ell.
\end{cases}
\end{equation*}
Now $\Psi_n^{(\ell)}$ can be computed using the inverse Laplace transform together with contour integration: all the singularities of $\widetilde{\Psi}_n^{(\ell)}$ are simple poles (indeed $\gamma(2^k)\neq\gamma(2^j)$ for $k,j\geq\ell$, $k\neq j$, by \eqref{plinear20}), located at negative real numbers. Therefore we have
\begin{equation*}
\Psi_n^{(\ell)}(t) = \frac{1}{2\pi i}\int_{-i\infty}^{i\infty} e^{zt}\widetilde{\Psi}_n^{(\ell)}(z)\de z,
\end{equation*}
where the previous integral is a complex integral on the imaginary axis; using Cauchy's Residue Theorem we find (for $t>0$)
\begin{equation} \label{plinear12}
\Psi_n^{(\ell)}(t)=
\begin{cases}
\displaystyle\sum_{k=\ell}^n \frac{\gamma(2^k)}{\gamma(2^\ell)} \prod_{\substack{j=\ell\\j\neq k}}^n \biggl(1-\frac{\gamma(2^k)}{\gamma(2^j)}\biggr)^{-1} e^{-\frac{\gamma(2^k)}{4}t} & n\geq\ell, \\
0 & n<\ell.
\end{cases}
\end{equation}
We remark for later use that for every $n\geq \ell$
\begin{equation} \label{plinear12bis}
\frac{\gamma(2^{\ell})}{4} \int_0^\infty \Psi_n^{(\ell)}(s)\de s = \frac{\gamma(2^{\ell})}{4}\widetilde{\Psi}^{(\ell)}_n(0) = 1 \,.
\end{equation}

We now derive the decay estimates in the statement, using the explicit expression \eqref{plinear12}. We first notice that, thanks to \eqref{plinear20}, we have for all $\ell\leq k \leq n$
\begin{align} \label{plinear13}
\frac{\gamma(2^k)}{\gamma(2^\ell)} \prod_{\substack{j=\ell\\j\neq k}}^n \bigg| 1-\frac{\gamma(2^k)}{\gamma(2^j)} \bigg|^{-1}
& = \frac{\gamma(2^k)}{\gamma(2^\ell)} \biggl(\prod_{j=\ell}^{k-1} \frac{1}{\frac{\gamma(2^k)}{\gamma(2^j)}-1}\biggr) \biggl(\prod_{j=k+1}^n \frac{1}{1-\frac{\gamma(2^k)}{\gamma(2^j)}}\biggr) \nonumber\\
& \leq \frac32 \, 2^{\beta(k-\ell)} \biggl( \prod_{j=\ell}^{k-1} \frac{1}{\frac12 2^{\beta(k-j)}-1} \biggr) \biggl( \prod_{j=k+1}^n \frac{1}{1-\frac32 2^{\beta(k-j)}} \biggr) \nonumber\\
& = \frac32 \, 2^{\beta(k-\ell)} \prod_{m=1}^{k-\ell}2^{-\beta m+1} \biggl(\prod_{m=1}^{k-\ell} \frac{1}{1-2^{-\beta m+1}} \biggr) \biggl(\prod_{m=1}^{n-k} \frac{1}{1-\frac322^{-\beta m}}\biggr) \nonumber\\
& \leq c\, 2^{\beta(k-\ell)} 2^{k-\ell} 2^{-\frac{\beta}{2}(k-\ell)(k-\ell+1)}
\leq c \, 4^{\beta(k-\ell)} 2^{-\frac{\beta}{2}(k-\ell)^2} ,
\end{align}
where $c>0$ is a numerical constant (depending only on $\beta$).
From \eqref{plinear13} it follows that
\begin{equation} \label{plinear14}
|\Psi_n^{(\ell)}(t)| \leq c e^{-\frac{\gamma(2^\ell)}{4}t} \sum_{m=0}^{n-\ell} 4^{\beta m} 2^{-\frac{\beta}{2}m^2} \leq c_1 e^{-\frac{\gamma(2^\ell)}{4}t},
\end{equation}
for another numerical constant $c_1>0$, also depending only on $\beta$. Using \eqref{plinear13} and \eqref{plinear20} we can further estimate the difference of $\Psi_n^{(\ell)}$ and $\Psi_{n+1}^{(\ell)}$, for $n\geq\ell$, as follows:
\begin{equation*}
\begin{split}
\big| \Psi_n^{(\ell)}(t) - \Psi_{n+1}^{(\ell)}(t) \big|
& \leq \sum_{k=\ell}^n \frac{\gamma(2^k)}{\gamma(2^\ell)} \prod_{\substack{j=\ell\\j\neq k}}^{n+1} \bigg| 1-\frac{\gamma(2^k)}{\gamma(2^j)}\bigg|^{-1} \frac{\gamma(2^k)}{\gamma(2^{n+1})} e^{-\frac{\gamma(2^k)}{4}t} \\
& \qquad\qquad\qquad + \frac{\gamma(2^{n+1})}{\gamma(2^\ell)} \prod_{j=\ell}^n \biggl|1-\frac{\gamma(2^{n+1})}{\gamma(2^j)}\biggr|^{-1} e^{-\frac{\gamma(2^{n+1})}{4}t} \\
& \leq \frac32\sum_{k=\ell}^{n+1} \frac{\gamma(2^k)}{\gamma(2^\ell)} \prod_{\substack{j=\ell\\j\neq k}}^{n+1} \bigg| 1-\frac{\gamma(2^k)}{\gamma(2^j)}\bigg|^{-1} 2^{-\beta(n+1-k)}e^{-\frac{\gamma(2^k)}{4}t} \\
& \leq \frac32 c \sum_{k=\ell}^{n+1} 4^{\beta(k-\ell)} 2^{-\frac{\beta}{2}(k-\ell)^2} 2^{-\beta(n+1-k)}e^{-\frac{\gamma(2^k)}{4}t}.
\end{split}
\end{equation*}
From this estimate it is easily seen that \eqref{linearfund2} follows (for a possibly larger constant $c_1>0$). The existence of the limit $\Psi_\infty^{(\ell)}(t)$ is an immediate consequence of \eqref{linearfund2}, which also implies \eqref{linearfund3} (taking a larger $c_1$) by writing
\begin{equation*}
\big| \Psi_n^{(\ell)}(t) - \Psi_{\infty}^{(\ell)}(t) \big|  \leq \sum_{m=n}^\infty \big| \Psi_m^{(\ell)}(t) - \Psi_{m+1}^{(\ell)}(t) \big| 
\leq c_12^{\beta\ell}e^{-\frac{\gamma(2^\ell)}{4}t}\sum_{m=n}^\infty 2^{-\beta m} \,.
\end{equation*}

It only remains to show \eqref{linearfund4}. To this aim, notice that we have the explicit formula
\begin{equation} \label{plinear16}
\Psi_\infty^{(\ell)}(t) := \sum_{k=\ell}^\infty \frac{\gamma(2^k)}{\gamma(2^\ell)} \prod_{\substack{j=\ell\\j\neq k}}^\infty \biggl(1-\frac{\gamma(2^k)}{\gamma(2^j)}\biggr)^{-1} e^{-\frac{\gamma(2^k)}{4}t}
\end{equation}
(the series is absolutely convergent in view of \eqref{plinear13}, which also implies that we can pass to the limit as $n\to\infty$ in \eqref{plinear12}).
Furthermore we have for all $n\geq\ell$
\begin{align} \label{plinear15}
2^{\beta n}\bigl( \Psi_n^{(\ell)}(t) - \Psi_\infty^{(\ell)}(t) \bigr)
& = 2^{\beta n}\sum_{k=\ell}^n \frac{\gamma(2^k)}{\gamma(2^\ell)} \prod_{\substack{j=\ell\\j\neq k}}^\infty \biggl( 1-\frac{\gamma(2^k)}{\gamma(2^j)}\biggr)^{-1} \Biggl[ \prod_{j=n+1}^\infty \biggl(1-\frac{\gamma(2^k)}{\gamma(2^j)}\biggr) - 1 \Biggr] e^{-\frac{\gamma(2^k)}{4}t} \nonumber \\
& \qquad + 2^{\beta n} \sum_{k=n+1}^\infty \frac{\gamma(2^k)}{\gamma(2^\ell)} \prod_{\substack{j=\ell\\j\neq k}}^\infty \biggl( 1-\frac{\gamma(2^k)}{\gamma(2^j)}\biggr)^{-1} e^{-\frac{\gamma(2^k)}{4}t} \,.
\end{align}
We now want to show that the previous expression has a limit as $n\to\infty$. Notice first that the last term in \eqref{plinear15} vanishes as $n\to\infty$, since in view of \eqref{plinear13}
\begin{equation} \label{plinear17}
2^{\beta n} \sum_{k=n+1}^\infty \frac{\gamma(2^k)}{\gamma(2^\ell)} \prod_{\substack{j=\ell\\j\neq k}}^\infty \bigg| 1-\frac{\gamma(2^k)}{\gamma(2^j)}\bigg|^{-1} e^{-\frac{\gamma(2^k)}{4}t}
\leq c 2^{\beta n}e^{-\frac{\gamma(2^{n+1})}{4}t} \sum_{k=n+1}^\infty 4^{\beta(k-\ell)} 2^{-\frac{\beta}{2}(k-\ell)^2}
\to 0 \,.
\end{equation}
For the first term on the right-hand side of \eqref{plinear15}, we first compute, using \eqref{kernel5},
\begin{equation} \label{plinear18}
\begin{split}
\lim_{n\to\infty} 2^{\beta n}\Biggl[ \prod_{j=n+1}^\infty \biggl(1-\frac{\gamma(2^k)}{\gamma(2^j)}\biggr) &- 1 \Biggr]
= \lim_{n\to\infty} 2^{\beta n} \Biggl[ \exp\Biggl(\sum_{j=n+1}^\infty\ln\Bigl(1-\frac{\gamma(2^k)}{\gamma(2^j)}\Bigr) \Biggr) - 1 \Biggr] \\
& = \lim_{n\to\infty} 2^{\beta n} \Biggl[ -\sum_{j=n+1}^\infty \frac{\gamma(2^k)}{\gamma(2^j)} + O\bigl(2^{2\beta(k-n)}\bigr) \Biggr]
= -\frac{\gamma(2^k)}{2^{\beta}-1}.
\end{split}
\end{equation}
It follows from \eqref{plinear15}, \eqref{plinear17} and \eqref{plinear18} that
\begin{equation} \label{plinear19}
\begin{split}
\lim_{n\to\infty} 2^{\beta n}\bigl( \Psi_n^{(\ell)}(t) - \Psi_\infty^{(\ell)}(t) \bigr)
= - \frac{1}{2^{\beta}-1} \sum_{k=\ell}^\infty \frac{(\gamma(2^k))^2}{\gamma(2^\ell)} \prod_{\substack{j=\ell\\j\neq k}}^\infty \biggl( 1-\frac{\gamma(2^k)}{\gamma(2^j)}\biggr)^{-1} e^{-\frac{\gamma(2^k)}{4}t} \,.
\end{split}
\end{equation}
The last estimate in \eqref{linearfund4} follows directly from \eqref{linearfund3}.
\end{proof}

By means of the fundamental solutions constructed in Lemma~\ref{lem:linearfundsol} we can now analyze the behaviour of solutions to \eqref{linear1} for $n\to\infty$. Recalling the notation \eqref{linear10} for the semigroup generated by the linear equation \eqref{linear1}, we have the following result.

\begin{lemma}\label{lem:linear2}
Let $\theta$, $\tilde{\theta}$ be fixed parameters satisfying the assumption \eqref{theta}, and let $y^0\in\mathcal{Y}_\theta$ be a given initial datum.
Then for every sufficiently large $n_0\in\N$ there exists a constant $\overline{C}_{n_0}>0$, depending on $M$, $\theta$, $\tilde{\theta}$, and $n_0$, such that the solution $S(t)(y^0)$ to the linear problem \eqref{linear1} with initial datum $y^0$, constructed in Lemma~\ref{lem:linear1}, satisfies the estimate
\begin{equation} \label{linear8}
2^{\tilde{\theta}n}|S_n(t)(y^0)-S_{n+1}(t)(y^0)| \leq \overline{C}_{n_0}\|y^0\|_\theta (1+t^{-\frac{\tilde{\theta}-\theta}{\beta}}) e^{-\nu t} \quad\text{for all $n> n_0$ and $t>0$,}
\end{equation}
where $\nu$ is as in Lemma~\ref{lem:linear1}. In particular for every $t>0$ is well-defined the limit
\begin{equation} \label{linear9}
S_\infty(t)(y^0) := \lim_{n\to\infty} S_n(t)(y^0) .
\end{equation}
Furthermore, there exists the limit
\begin{equation}  \label{linear11}
D^\beta_\infty S(t)(y^0) := \lim_{n\to\infty} 2^{\beta n}\bigl( S_n(t)(y^0) - S_\infty(t)(y^0) \bigr) \,.
\end{equation}
\end{lemma}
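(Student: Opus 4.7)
The strategy is to split $\mathscr{L}_n$ into the first-order part $\mathscr{L}_n^0(y):=\frac{\gamma(2^n)}{4}(y_{n-1}-y_n)$, whose fundamental solutions $\Psi^{(\ell)}_n$ were analyzed in Lemma~\ref{lem:linearfundsol}, and the remainder $R_n(y):=\frac{\gamma(2^n)\sigma_n}{4}(y_{n+1}-y_n)$, which is super-exponentially small as $n\to\infty$ by \eqref{linear3}. Fix $n_0$ large enough that \eqref{plinear20} applies and that Lemma~\ref{lem:linear1} yields $|S_n(t)(y^0)-\bar m|\leq C\|y^0\|_\theta(1+2^{-n})e^{-\nu t}$ for $n\leq n_0$. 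Regarding $y_{n_0}(t):=S_{n_0}(t)(y^0)$ as a prescribed boundary value for the semi-infinite system on $\{n\geq n_0+1\}$, Duhamel's principle applied to $\partial_t y_n=\mathscr{L}_n^0(y)+R_n(y)$ gives, for every $n\geq n_0+1$ and $t>0$,
\begin{multline*}
S_n(t)(y^0) \;=\; \sum_{\ell\geq n_0+1} \Psi^{(\ell)}_n(t)\,y^0_\ell \;+\; \int_0^t \tfrac{\gamma(2^{n_0+1})}{4}\,\Psi^{(n_0+1)}_n(t-s)\,S_{n_0}(s)(y^0)\,ds \\ +\; \int_0^t \sum_{\ell\geq n_0+1} \Psi^{(\ell)}_n(t-s)\,R_\ell(S(s)(y^0))\,ds,
\end{multline*}
from which \eqref{linear8} will be derived by subtracting the analogous identity at $n+1$ and invoking the sharp difference bound \eqref{linearfund2}.

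The main calculation concerns the initial-data sum. Using $|y^0_\ell|\leq 2^{-\theta\ell}\|y^0\|_\theta$ for $\ell>0$ together with $\gamma(2^\ell)\gtrsim 2^{\beta\ell}$ from \eqref{plinear20}, \eqref{linearfund2} gives
\begin{equation*}
2^{\tilde\theta n}\bigg|\sum_{\ell\geq n_0+1}\bigl(\Psi^{(\ell)}_n-\Psi^{(\ell)}_{n+1}\bigr)(t)\,y^0_\ell\bigg| \;\lesssim\; \|y^0\|_\theta\,2^{(\tilde\theta-\beta)n}\sum_{\ell=n_0+1}^{n+1}2^{(\beta-\theta)\ell}\exp\!\bigl(-c\,2^{\beta\ell}t\bigr).
\end{equation*}
Splitting this sum at the ``parabolic scale'' $\ell^\ast\sim-\tfrac{1}{\beta}\log_2 t$ and using the hypotheses $\theta<\beta$, $\tilde\theta<\beta$, $\tilde\theta\geq\theta$, $\tilde\theta-\theta<\beta$ from \eqref{theta}, one checks that both the regimes $\ell^\ast\leq n$ and $\ell^\ast>n$ yield a bound of the form $\|y^0\|_\theta(1+t^{-(\tilde\theta-\theta)/\beta})e^{-\nu t}$ uniformly in $n\geq n_0+1$, with $\nu:=\gamma(2^{n_0+1})/4$ arising from the slowest-decaying mode $\ell=n_0+1$. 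The boundary integral is controlled by the same calculation applied to the convolution in $s$, using $|S_{n_0}(s)(y^0)|\leq C\|y^0\|_\theta$ from Lemma~\ref{lem:linear1} and \eqref{linear6bis}; the constant $\bar m$ does not contribute to the difference $S_n-S_{n+1}$ because constants lie in the kernel of $\mathscr{L}_n^0$, and the corresponding sums of $\Psi^{(\ell)}_n$ telescope accordingly.

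For the perturbation integral, $|R_\ell(S(s)(y^0))|\leq \tfrac{\gamma(2^\ell)\sigma_\ell}{4}|S_\ell(s)(y^0)-S_{\ell+1}(s)(y^0)|$ is bilinear in the quantity we are estimating, but the super-exponential decay $\sigma_\ell\sim 2^{2\beta-\alpha+2}e^{-A_M 2^\ell}$ from \eqref{linear3} implies that $\sup_{\ell\geq n_0+1}\gamma(2^\ell)\sigma_\ell$ can be made arbitrarily small by enlarging $n_0$; one then closes the estimate by a contraction in the weighted norm
\begin{equation*}
\|z\|_\ast := \sup_{n\geq n_0,\,t>0} \frac{2^{\tilde\theta n}\,|z_n(t)-z_{n+1}(t)|}{(1+t^{-(\tilde\theta-\theta)/\beta})\,e^{-\nu t}},
\end{equation*}
completing \eqref{linear8}. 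The limit \eqref{linear9} then follows from the telescoping identity $S_n-S_{n+k}=\sum_{j=0}^{k-1}(S_{n+j}-S_{n+j+1})$ and the summability of \eqref{linear8} in $n$, while \eqref{linear11} is obtained from an entirely parallel Duhamel computation in which \eqref{linearfund4} replaces \eqref{linearfund2} and dominated convergence (justified by the bounds in Lemma~\ref{lem:linearfundsol}) is used to pass the limit $n\to\infty$ through the integrals. The principal obstacle is the sharp parabolic-scale analysis of the initial-data sum, producing the short-time singularity $t^{-(\tilde\theta-\theta)/\beta}$ uniformly in $n$; the boundary and perturbation contributions fit the same template with easier estimates, thanks to Lemma~\ref{lem:linear1} and the super-exponential smallness of $\sigma_\ell$.
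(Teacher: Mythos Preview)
Your approach is essentially that of the paper: both use the Duhamel representation via the fundamental solutions $\Psi_n^{(\ell)}$ on the half-line $n>n_0$, with boundary data $S_{n_0}(t)(y^0)$ supplied by Lemma~\ref{lem:linear1}, and both handle the initial-data sum by splitting at the parabolic scale $\ell^\ast\sim -\tfrac{1}{\beta}\log_2 t$ to produce the short-time factor $t^{-(\tilde\theta-\theta)/\beta}$. The only structural difference is in how the remainder term is absorbed: you propose a contraction in the weighted norm $\|\cdot\|_\ast$ using the smallness of $\sup_{\ell>n_0}\gamma(2^\ell)\sigma_\ell$, whereas the paper sets $w(t):=\sup_{n>n_0}2^{\tilde\theta n}|y_n(t)-y_{n+1}(t)|$, derives an integral inequality $w(t)\leq C\|y^0\|_\theta(1+t^{-\mu})e^{-\nu t}+c_3\int_0^t e^{-L(t-s)}w(s)\,ds$ with $L=\gamma(2^{n_0+1})/8$, and closes by a Gr\"onwall argument after enlarging $n_0$ so that $L>c_3+\nu$. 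Both mechanisms work and are essentially equivalent.

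One imprecision worth correcting: the decay rate $\nu$ in \eqref{linear8} is \emph{not} $\gamma(2^{n_0+1})/4$ but the (smaller, fixed) $\nu$ from Lemma~\ref{lem:linear1}, and it is the boundary contribution that forces it. Your treatment of the boundary integral using only $|S_{n_0}(s)(y^0)|\leq C\|y^0\|_\theta$ would yield no time decay; what is needed is the splitting $S_{n_0}(s)=\bar m+(S_{n_0}(s)-\bar m)$, with the second piece decaying like $e^{-\nu s}$ by Lemma~\ref{lem:linear1}. The paper handles the constant piece via the identity $\tfrac{\gamma(2^{n_0+1})}{4}\int_0^\infty\Psi_n^{(n_0+1)}(s)\,ds=1$, so that the $\bar m$-contribution to the difference becomes a tail integral $\int_t^\infty(\Psi_n^{(n_0+1)}-\Psi_{n+1}^{(n_0+1)})(s)\,ds$, controlled by \eqref{linearfund2}. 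Your ``constants lie in the kernel'' remark points at the same fact but should be made precise in this way.
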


\begin{proof}
By means of the fundamental solutions $\Psi_n^{(\ell)}$ we can write a representation formula for the solution to \eqref{linear1} in the region $n>n_0$, where $n_0\in\N$ is to be chosen sufficiently large, in terms of the initial values $y_n^0$ and of the values of the solution for $n=n_0$. More precisely, we solve the initial/boundary value problem
\begin{equation} \label{plinear21}
\begin{cases}
\frac{\de y_n}{\de t} = \frac{\gamma(2^{n})}{4} \bigl( y_{n-1}-y_n \bigr) + r_n(t) & n>n_0, \\
y_n(0) = y_n^0 & n>n_0,\\
y_{n_0}(t) = \lambda(t) & t>0,
\end{cases}
\end{equation}
for given functions $\lambda(t)$ and $r_n(t)$. Notice that, by Lemma~\ref{lem:linear1}, we have the estimate
\begin{equation} \label{plinear22}
|\lambda(t)-\bar{m}| = |y_{n_0}(t)-\bar{m}| \leq 2C_{n_0}\|y^0\|_{\theta}e^{-\nu t},
\end{equation}
where $\bar{m}$ is the constant introduced in \eqref{linear6}. Moreover in view of \eqref{kernel5} and \eqref{linear3} we can assume that
\begin{equation} \label{plinear23}
|r_n(t)|:= \Big|-\frac{\gamma(2^n)}{4}\sigma_n\bigl(y_n(t)-y_{n+1}(t)\bigr)\Big| \leq c_2 2^{\beta n}e^{-A_M2^n}|y_n(t)-y_{n+1}(t)|
\end{equation}
for a uniform constant $c_2>0$.

By Duhamel's Principle we can write the solution to \eqref{plinear21}, for all $n>n_0$, as
\begin{equation} \label{plinear24}
\begin{split}
y_n(t)
& = \frac{\gamma(2^{n_0+1})}{4}\int_0^t \Psi_n^{(n_0+1)}(t-s)\lambda(s)\de s + \sum_{\ell=n_0+1}^n \Psi_n^{(\ell)}(t)y_\ell^0 \\
& \qquad\qquad + \int_0^t \sum_{\ell=n_0+1}^n \Psi_n^{(\ell)}(t-s)r_\ell(s)\de s \,.
\end{split}
\end{equation}
Then, in view of the identity \eqref{plinear12bis}, we estimate the difference between $y_n(t)$ and $y_{n+1}(t)$, $n\geq n_0+1$, as follows:
\begin{equation*}
\begin{split}
|y_n(t) - y_{n+1}(t)|
& \leq \frac{\gamma(2^{n_0+1})}{4} \int_0^t \big| \Psi_n^{(n_0+1)}-\Psi_{n+1}^{(n_0+1)} \big|(t-s) |\lambda(s)-\bar{m}| \de s \\
& \qquad+ |\bar{m}|\frac{\gamma(2^{n_0+1})}{4}\int_t^\infty \big|\Psi_n^{(n_0+1)}(s)-\Psi_{n+1}^{(n_0+1)}(s)\big|\de s  \\
& \qquad+ \sum_{\ell=n_0+1}^{n} \big| \Psi_n^{(\ell)}(t)-\Psi_{n+1}^{(\ell)}(t) \big| |y_\ell^0| + \Psi_{n+1}^{(n+1)}(t)|y_{n+1}^0| \\
& \qquad+ \int_0^t\sum_{\ell=n_0+1}^{n+1}  \big| \Psi_n^{(\ell)}-\Psi_{n+1}^{(\ell)} \big|(t-s) |r_\ell(s)|\de s \,.
\end{split}
\end{equation*}
Using the estimates \eqref{linearfund2}, \eqref{plinear22}, \eqref{linear6bis}, and \eqref{plinear23} in the previous inequality we obtain
\begin{align} \label{plinear25}
2^{\tilde{\theta}n}|y_n(t) - y_{n+1}(t)|
& \leq 2c_1C_{n_0}\|y^0\|_\theta\frac{\gamma(2^{n_0+1})}{4}2^{\beta(n_0+1)}2^{(\tilde{\theta}-\beta)n} \int_0^t e^{-\frac{\gamma(2^{n_0+1})}{4}(t-s)} e^{-\nu s} \de s \nonumber \\
& \qquad + c_1C_0\|y^0\|_\theta\frac{\gamma(2^{n_0+1})}{4} 2^{\beta(n_0+1)}2^{(\tilde{\theta}-\beta)n} \int_t^\infty e^{-\frac{\gamma(2^{n_0+1})}{4}s}\de s \nonumber \\
& \qquad + c_1 \|y^0\|_\theta 2^{(\tilde{\theta}-\beta)n} \sum_{\ell=n_0+1}^{n+1} 2^{(\beta-\theta)\ell}e^{-\frac{\gamma(2^\ell)}{4}t} \\
& \qquad + c_1c_2 2^{(\tilde{\theta}-\beta)n} \int_0^t\sum_{\ell=n_0+1}^{n+1} 2^{2\beta\ell}e^{-\frac{\gamma(2^\ell)}{4}(t-s)} e^{-A_M2^\ell} |y_\ell-y_{\ell+1}|(s) \de s \,. \nonumber
\end{align}
The term which requires more attention is the third one on the right-hand side of \eqref{plinear25}, which becomes singular as $t\to0^+$. We let $n_t:=\lfloor-\frac{1}{\beta}\frac{\ln t}{\ln2}\rfloor$, where $\lfloor\cdot\rfloor$ denotes the integer part, and $t_0:=2^{-\beta}$. Then for $t\leq t_0$ we have $n_t\geq1$ and, for $n\geq n_t$,
\begin{equation} \label{plinear26}
\begin{split}
2^{(\tilde{\theta}-\beta)n}\sum_{\ell=n_0+1}^{n+1} & 2^{(\beta-\theta)\ell}e^{-\frac{\gamma(2^\ell)}{4}t}
\leq 2^{(\tilde{\theta}-\beta)n} e^{-\frac{\gamma(2^{n_0+1})}{8}t} \sum_{\ell=0}^{n+1} 2^{(\beta-\theta)\ell}e^{-\frac{\gamma(2^\ell)}{8}t} \\
& \leq 2^{(\tilde{\theta}-\beta)n} e^{-\frac{\gamma(2^{n_0+1})}{8}t} \biggl(  \sum_{\ell=0}^{n_t-1} 2^{(\beta-\theta)\ell} + 2^{(\beta-\theta) n_t}\sum_{j=0}^\infty 2^{(\beta-\theta) j} e^{-\frac{\gamma(2^{j+n_t})}{8}t} \biggr)\\
& \leq 2^{(\tilde{\theta}-\beta)n} e^{-\frac{\gamma(2^{n_0+1})}{8}t} \biggl(\frac{2^{(\beta-\theta)n_t}}{2^{\beta-\theta}-1} + 2^{(\beta-\theta) n_t}\sum_{j=0}^\infty 2^{(\beta-\theta) j} e^{-\frac{2^{\beta(j-1)}}{16}} \biggr)\\
& \leq c_\theta 2^{(\tilde{\theta}-\beta)n}2^{(\beta-\theta)n_t} e^{-\frac{\gamma(2^{n_0+1})}{8}t}
\leq c_\theta 2^{(\tilde{\theta}-\theta)n_t}e^{-\frac{\gamma(2^{n_0+1})}{8}t} \\
& \leq c_\theta t^{-\frac{\tilde{\theta}-\theta}{\beta}}e^{-\frac{\gamma(2^{n_0+1})}{8}t},
\end{split}
\end{equation}
where we used the bound $\gamma(2^{j+n_t})\geq \frac12 2^{\beta j}2^{\beta n_t}\geq\frac{1}{2t}2^{\beta(j-1)}$, and $c_\theta$ is a constant depending only on $\theta$. It is easily seen that the same estimate holds for $n<n_t$. For values of $t>t_0$, it is straightforward to obtain the bound
\begin{equation}  \label{plinear26bis}
2^{(\tilde{\theta}-\beta)n}\sum_{\ell=n_0+1}^{n+1} 2^{(\beta-\theta)\ell}e^{-\frac{\gamma(2^\ell)}{4}t} \leq c_\theta e^{-\frac{\gamma(2^{n_0+1})}{8}t} 
\end{equation}
with a possibly larger constant $c_\theta$. Hence, combining \eqref{plinear26} and \eqref{plinear26bis} we find
\begin{equation}  \label{plinear26ter}
2^{(\tilde{\theta}-\beta)n}\sum_{\ell=n_0+1}^{n+1} 2^{(\beta-\theta)\ell}e^{-\frac{\gamma(2^\ell)}{4}t}
\leq c_\theta (1+t^{-\frac{\tilde{\theta}-\theta}{\beta}})e^{-\frac{\gamma(2^{n_0+1})}{8}t} .
\end{equation}

Now, setting $w(t):=\sup_{n>n_0} 2^{\tilde{\theta} n}|y_n(t)-y_{n+1}(t)|$ and $L:=\frac{\gamma(2^{n_0+1})}{8}$, by inserting \eqref{plinear26ter} into \eqref{plinear25} we can conclude that there exist constants $C>0$ (depending on $M$, $\theta$, $n_0$) and $c_3>0$ (depending only on $M$) such that
\begin{equation*}
w(t) \leq C\|y^0\|_\theta \biggl( \int_0^t e^{-L(t-s)}e^{-\nu s}\de s + \int_t^\infty e^{-Ls}\de s + \bigl(1+t^{-\frac{\tilde{\theta}-\theta}{\beta}}\bigr)e^{-Lt} \biggr) + c_3\int_0^t e^{-L(t-s)}w(s)\de s \,,
\end{equation*}
which yields, for a possibly larger constant $C$,
\begin{equation} \label{plinear27}
w(t) \leq C\|y^0\|_\theta(1+t^{-\frac{\tilde{\theta}-\theta}{\beta}})e^{-\nu t} + c_3\int_0^t e^{-L(t-s)}w(s)\de s \,.
\end{equation}
We can then apply a Gr\"onwall-type argument to obtain an exponential-in-time decay of $w(t)$: letting $W(t):=\int_0^t e^{-L(t-s)}w(s)\de s$, \eqref{plinear27} yields
\begin{equation*}
\frac{\de W}{\de t} = w(t) - LW(t) \leq C\|y^0\|_\theta(1+t^{-\frac{\tilde{\theta}-\theta}{\beta}})e^{-\nu t} + (c_3-L)W(t) .
\end{equation*}
Recalling the definition of $L$, we can choose $n_0$ sufficiently large so that $c_3-L<-\nu$; then from the previous differential inequality we obtain $W(t) \leq C\|y^0\|_\theta e^{-\nu t}$ (for a larger constant $C$, depending on $M$, $n_0$, $\theta$, $\tilde{\theta}$), and in turn by \eqref{plinear27}
\begin{equation*}
w(t) \leq C\|y^0\|_\theta(1+t^{-\frac{\tilde{\theta}-\theta}{\beta}})e^{-\nu t} .
\end{equation*}
This completes the proof of \eqref{linear8}, which in particular yields the existence of the limit $y_\infty(t):=\lim_{n\to\infty}y_n(t)$.

It remains to prove the existence of the limit in \eqref{linear11}. Notice that by Lebesgue's Dominated Convergence Theorem and the estimate \eqref{linearfund3} we can pass to the limit as $n\to\infty$ in \eqref{plinear24}:
\begin{equation} \label{plinear28}
\begin{split}
y_\infty(t)
& = \frac{\gamma(2^{n_0+1})}{4}\int_0^t \Psi_\infty^{(n_0+1)}(t-s)\lambda(s)\de s + \sum_{\ell=n_0+1}^\infty \Psi_\infty^{(\ell)}(t)y_\ell^0 \\
& \qquad\qquad + \int_0^t \sum_{\ell=n_0+1}^\infty \Psi_\infty^{(\ell)}(t-s)r_\ell(s)\de s \,.
\end{split}
\end{equation}
Then, using the expressions \eqref{plinear24} and \eqref{plinear28}, and recalling \eqref{linearfund4}, one can show that
\begin{equation*}
\begin{split}
D^\beta_\infty y(t)
& := \lim_{n\to\infty} 2^{\beta n}\bigl( y_n(t) - y_\infty(t) \bigr) \\
& = \frac{\gamma(2^{n_0+1})}{4}\int_0^t D^\beta_\infty\Psi^{(n_0+1)}(t-s)\lambda(s)\de s + \sum_{\ell=n_0+1}^\infty D^\beta_\infty\Psi^{(\ell)}(t)y_\ell^0 \\
& \qquad\qquad + \int_0^t \sum_{\ell=n_0+1}^\infty D^\beta_\infty\Psi^{(\ell)}(t-s)r_\ell(s)\de s \,,
\end{split}
\end{equation*}
where the uniform estimate \eqref{linearfund3} allows to pass to the limit under the integral sign.
\end{proof}

The proof of the result in Section~\ref{sect:linear} follows now by combining Lemma~\ref{lem:linear1} and Lemma~\ref{lem:linear2}.

\begin{proof}[Proof of Theorem~\ref{thm:linear}]
The first estimate \eqref{linear13b} follows directly by combining Lemma~\ref{lem:linear1} and Lemma~\ref{lem:linear2}, and yields the existence of the limit $S_\infty(t)(y^0)$. Moreover, another application of Lemma~\ref{lem:linear2} gives for all $m>n> n_0$
\begin{equation*}
\begin{split}
|S_n(t)(y^0)-S_m(t)(y^0)|
& \leq \sum_{k=n}^{m-1}|S_k(t)(y^0)-S_{k+1}(t)(y^0)| \\
& \leq \overline{C}_{n_0}\|y^0\|_\theta \bigl(1+t^{-\frac{\tilde{\theta}-\theta}{\beta}}\bigr)e^{-\nu t} \sum_{k=n}^{m-1}2^{-\tilde{\theta}k},
\end{split}
\end{equation*}
hence by passing to the limit as $m\to\infty$
\begin{equation} \label{linear14}
2^{\tilde{\theta}n}|S_n(t)(y^0)-S_\infty(t)(y^0)| \leq \overline{C}_{n_0}\frac{2^{\tilde{\theta}}}{2^{\tilde{\theta}}-1}\|y^0\|_\theta \bigl(1+t^{-\frac{\tilde{\theta}-\theta}{\beta}}\bigr)e^{-\nu t}
\end{equation}
provided that $\tilde{\theta}>0$. In particular, by using Lemma~\ref{lem:linear1} we also have
\begin{equation*}
\begin{split}
|S_\infty(t)(y^0)-\bar{m}|
& \leq |S_\infty(t)(y^0)-S_{n_0}(t)(y^0)| + |S_{n_0}(t)(y^0)-\bar{m}| \\
& \leq \biggl[\frac{\overline{C}_{n_0}2^{\tilde{\theta}}}{2^{\tilde{\theta}}-1} 2^{-\tilde{\theta} n_0} + C_{n_0}(2^{-n_0}+1) \biggr] \|y^0\|_\theta\bigl(1+t^{-\frac{\tilde{\theta}-\theta}{\beta}}\bigr)e^{-\nu t} \,.
\end{split}
\end{equation*}
Then \eqref{linear13} is a straightforward consequence of this estimate, \eqref{linear5}, and \eqref{linear14}. The identity \eqref{linear12} can be obtained by recalling the asymptotics \eqref{kernel5} and the fast decay rate \eqref{linear3} of $\sigma_n$ as $n\to\infty$.
\end{proof}

\begin{remark}
From the proof of Lemma~\ref{lem:linear2}, one can see that the constant in the estimate \eqref{linear13b} blows up if $\theta\to\beta$ or $\tilde{\theta}-\theta\to\beta$. From \eqref{linear14}, the constant in \eqref{linear13} explodes also if $\tilde{\theta}\to0$.
\end{remark}


\bigskip
\noindent
{\bf Acknowledgments.}
The authors acknowledge support through the CRC 1060 \textit{The mathematics of emergent effects} at the University of Bonn that is funded through the German Science Foundation (DFG).

\bibliographystyle{siam}
\bibliography{Bibliography}

\end{document}